\definecolor{blue}{HTML}{1F77B4}
\definecolor{orange}{HTML}{FF7F0E}
\definecolor{green}{HTML}{2CA02C}
\pgfplotsset{compat=1.14}
\newtheorem{tm}{Theorem}[section]
\newtheorem{prop}[tm]{Proposition}
\newtheorem{lem}[tm]{Lemma}
\newtheorem{rk}[tm]{Remark}
\numberwithin{equation}{section}
\numberwithin{tm}{section}
\title{ Multiplicity of endemic equilibria for a diffusive SIS epidemic model with mass-action }
\author{Keoni Castellano\footnote{castek1@unlv.nevada.edu} \quad  and \quad  Rachidi B. Salako \footnote{rachidi.salako@unlv.edu}  
\\
\\
{\small Department of  Mathematical Sciences,  University of Nevada Las Vegas,}\\
{\small Las Vegas, NV 89154, USA}}
\date{}
\begin{document}

\maketitle

\begin{abstract} 

We study a diffusive   SIS epidemic model with the mass-action transmission mechanism and show, under appropriate assumptions on the parameters, the existence of multiple endemic equilibria (EE).  Our results answer some   open questions  on previous studies related to   disease extinction  or persistence when $\mathcal{R}_0<1$ and the multiplicity of EE solutions when $\mathcal{R}_0>1$.  Interestingly, even with such a  simple nonlinearity induced by the mass-action,  we show that the diffusive epidemic model may have an S-shaped or backward bifurcation curve of EE solutions.

\end{abstract}

\noindent{\bf Keywords}: Infectious Disease Models; Reaction-Diffusion Systems;
Asymptotic Behavior.
\smallskip

{
\noindent{\bf 2010 Mathematics Subject Classification}: 92D25, 35B40, 35K57}

\section{Introduction}

 \quad 
 As of May 2023, according to the World Health Organization (WHO)  coronavirus (COVID-19) dashboard, SARS-CoV-2 has infected more than seven hundred million  people  and   claimed more than six million deaths worldwide. This fast spread of the SARS-CoV-2 virus is partly due to globalization that has  made the world more connected. To  predict the dynamics of infectious diseases and  develop effective control strategies, researchers have developed and investigated epidemic models.

  \quad Consider   the ODE  Susceptible-Infected-Susceptible (ODE-SIS) mathematical epidemic system 

 \begin{equation}\label{model-001}
     \begin{cases}\frac{dS}{dt}= \gamma I -f(S,I)S & t>0,\cr 
     \frac{dI}{dt}= f(S,I)S-\gamma I & t>0,
     \end{cases}
 \end{equation}
 where $f$ is a locally Lipschitz  function on $\mathbb{R}_+^2$ satisfying $f(S,0)=0$ for $S\ge 0$. The  model \eqref{model-001}  describes the dynamics of a population living on a single patch and affected by  an  infectious disease. In  system \eqref{model-001}, $S$ and $I$ denote the size of the susceptible and infected populations, respectively; $\gamma$ is the disease recovery rate;  the function $f(S,I)$ accounts for the force of infection.  Two such functions are common: $f(S,I)=\beta\frac{I}{I+S}$ known as the frequency-dependent transmission mechanism, and $f(S,I)=\beta I$ called the mass-action transmission mechanism. Here, $\beta$ is the disease transmission rate. A basic fact about system \eqref{model-001} is that the total population size $N=S+I$ is constant. 
 Moreover, for simple force of infection functions  (e.g.,   frequency-dependent or  mass-action incidence mechanism) the {\it basic reproduction number } (BRN) 
 defined as  the expected number of new cases directly generated by one case in a population where all individuals are susceptible to infection, is enough to completely understand the dynamics of the disease:  the disease eventually extincts if BRN is less or equal to one, whereas it persists and eventually stabilizes at the unique endemic steady state solution if  BRN exceeds one. 
  Note that the ODE-SIS \eqref{model-001} assumes that the population is uniformly distributed and do not disperse.  In reality,  people move around  for several reasons including academic, economic, social, and political reasons, among others. This population movement contributes to the circulation of infectious diseases. Hence, 
   to obtain more precise predictions on infectious disease dynamics,  the ODE-SIS  model \eqref{model-001} must be adjusted to incorporate  population movement and spatial heterogeneity of the environment.

\quad In  2008, to study the effect of population movement and environmental heterogeneity on disease persistence, Allen et al. \cite{Allen2008}  included diffusion of populations into the system \eqref{model-001} with the frequency-dependent transmission mechanism and studied   the diffusive epidemic model 
\begin{equation}\label{e1-prime}
    \begin{cases}
    S_t=d_S\Delta S+\gamma I -\beta \frac{SI}{S+I} & x\in\Omega,\ t>0,\cr
    I_{t}=d_I\Delta I+\beta \frac{SI}{S+I}-\gamma I& x\in\Omega,\ t>0,\cr
     0=\partial_{\vec{n}}S=\partial_{\vec{n}}I & x\in\partial\Omega,\ t>0,\cr
    N=\int_{\Omega}(S+I),
    \end{cases}
\end{equation}
where $\Omega$ is a bounded domain in $\mathbb{R}^n$ ($n\ge 1$) with a smooth boundary $\partial\Omega$ and  $\vec{n}$ denotes the outward unit normal vector at $\partial\Omega$. $S(x,t)$ and $I(x,t)$ are the local densities of the susceptible and infected populations, respectively. Hence, $S$ and $I$ are both location dependent. The positive constants $d_S$ and $d_I$  represent the diffusion rates of the susceptible and infected populations, respectively, while $\beta$ and $\gamma$ are positive and H\"older continuous functions on $\overline{\Omega}$.  The authors of \cite{Allen2008} gave a variational formula for the BRN of \eqref{e1-prime}, denoted as $\mathcal{R}_1$  (see formula \ref{R-star-eq} below). They established that the disease will eventually be eradicated if $\mathcal{R}_1<1$, whereas system  \eqref{e1-prime} has a unique EE solution  if $\mathcal{R}_1>1$. Hence, as in  the corresponding ODE-SIS model \eqref{model-001}, the diffusive epidemic model \eqref{e1-prime} has a (unique) EE if and only if $\mathcal{R}_1>1$. For more studies on system \eqref{e1-prime} we refer to \cite{Allen2008,Peng2009a,Peng_Yi2013,Peng2009b}.  

 \quad Inspired by the above mentioned works, several studies have been devoted to  diffusive epidemic models (see \cite{Cui_Lou2016, 
 LP2022, LSS2023, LouSalako2021, Peng_Shi2008, DeJong1995, GKLZ2015, LS2023_2, Peng_Zhao, Salako2023_1, TW2023} and the references therein). In particular, Deng and Wu \cite{DengWu2016}  considered the diffusive counterpart of the ODE-SIS epidemic model \eqref{model-001} with the mass-action: 

\begin{equation}\label{e1}
    \begin{cases}
    S_t=d_S\Delta S+\gamma I -\beta SI & x\in\Omega,\ t>0,\cr
    I_{t}=d_I\Delta I+\beta SI-\gamma I& x\in\Omega,\ t>0,\cr
     0=\partial_{\vec{n}}S=\partial_{\vec{n}}I & x\in\partial\Omega,\ t>0,\cr
    N=\int_{\Omega}(S+I).
    \end{cases}
\end{equation}
The variables in \eqref{e1} have the same meanings as those in \eqref{e1-prime}.   They also found a variational formula for the BRN of \eqref{e1}, denoted as $\mathcal{R}_0$ (see \eqref{R-0-def} below) and established the existence of EE for $\mathcal{R}_0>1$. Furthermore, they obtained some partial results on the nonexistence of EE when $\mathcal{R}_0$ is sufficiently small, and uniqueness of EE when $\mathcal{R}_0$ is large. The works \cite{CastellanoSalako2021, Wu_Zou2016, Wen2018} further studied the asymptotic profiles of EE as the diffusion rates of the population get small or large. However, the following question remains. Does system \eqref{e1} have multiple  EE solutions for some range of  $\mathcal{R}_0$?   

 \quad 
\quad Our goal in this  study is to investigate the structure (i.e. multiplicity/uniqueness) of the EE solutions  of the diffusive  model \eqref{e1} and their asymptotic profiles for small $d_S$. Our results provide affirmative answer to the above mentioned open question. Indeed, roughly speaking, we show that there exist some  critical numbers $0<N_{\rm critic,1}\le N_{\rm critic,2}$, uniquely determined by the diffusion rate  of the  infected individuals $d_I$, such that system \eqref{e1} has no EE solution if  $N\le N_{\rm critic,1}$. However, if $N>N_{\rm critic,1}$ and  $d_S$ is small, then \eqref{e1} has a maximal EE solution.  In addition, if $N_{\rm critic,1}<N_{\rm critic,2}$ and $N_{\rm critic,1}<N<N_{\rm critic,2}$, it also has a minimal EE solution. This corresponds to the scenario of multiplicity of EE when $\mathcal{R}_0<1$ (see Theorem \ref{T0}). Meanwhile, we show that multiplicity of EE can never occur for large values of $d_S$ (see Theorem \ref{T1}). On the other hand, when $d_S$ is small, we show that  the multiplicity of EE solution may result either from a: (i) backward bifurcation at $N=N_{\rm critic,2}$ (see Theorem \ref{T2-2}), or (ii)  forward and S-shaped bifurcation at $N=N_{\rm critic,2}$ (see Theorem \ref{T2}). Moreover, in case (ii), we establish that system \eqref{e1} has at least three EE solutions for some range of $N>N_{\rm critic,2}$ and sufficiently small values of $d_S$, which corresponds to the scenario of multiplicity of the EE solution when $\mathcal{R}_0>1$. Furthermore, as a consequence of  the multiplicity of EE solutions, depending on how the movement rate of the susceptible individuals is lowered, the disease may either persist or die out (see Theorem \ref{T4}).     


\quad In Section \ref{main-results}, we state our main results. Section \ref{Preliminaries} contains some preliminaries  essential for the proofs of our main results, which are presented in Section \ref{proofs}. In the Appendix, we construct a few examples of parameters for which the hypotheses of our theorems hold.

\section{Main Results}\label{main-results}

\quad We state our main results in the first two subsections and compliment them with a conclusion. First, we introduce some notation. Given $q\in[1,\infty)$ and an integer $k\ge 1$, let $L^q(\Omega)$ denote the Banach space of $L^q$-integrable functions on $\Omega$, and  $W^{k,q}(\Omega)$ the usual Sobolev space. For every $d_I>0$, define 
\begin{equation}\label{R-star-eq}
    \mathcal{R}_1=\mathcal{R}_1(d_I)=\sup_{\varphi\in W^{1,2}(\Omega)\setminus\{0\}}\frac{\int_{\Omega}\beta\varphi^2}{\int_{\Omega}[d_I|\nabla \varphi|^2+\gamma\varphi^2]}.
\end{equation}
It is well known (see \cite{Allen2008}) that the supremum in \eqref{R-star-eq} is achieved and there is a unique positive function $\varphi_{1}\in C^2(\overline{\Omega})$ with $\|\varphi_{1}\|_{L^2(\Omega)}=1$ satisfying 
\begin{equation}\label{R-star-pde}
    \begin{cases}
    0=d_I\Delta \varphi-\gamma\varphi+\frac{1}{\mathcal{R}_1}\beta\varphi & x\in\Omega,\cr 
    0=\partial_{\vec{n}}\varphi & x\in\partial\Omega.
    \end{cases}
\end{equation}
Furthermore, any solution of \eqref{R-star-pde} is spanned by $\varphi_{1}$. Thanks to \cite{Allen2008}, the quantity $\mathcal{R}_1$ is the basic reproduction number for \eqref{e1-prime}. Moreover, when the  assumption {\bf (A)},

\medskip

\noindent{\bf (A)} The function $\frac{\beta}{\gamma}$ is not constant,

\medskip

holds, it follows from Lemma \ref{lem1} that $\mathcal{R}_1$ is strictly decreasing in $d_I$, and has an inverse function, which we denote by $\mathcal{R}_1^{-1}$. Throughout the remainder of this work, we shall always suppose that assumption {\bf (A)} holds. It follows from \cite{DengWu2016} that  the quantity  $\mathcal{R}_0=\mathcal{R}_0(N,d_I)$, defined by 
\begin{equation}\label{R-0-def}
    \mathcal{R}_0=\frac{N}{|\Omega|}\mathcal{R}_1,
\end{equation}
is the basic reproduction number of \eqref{e1}. It is clear from \eqref{R-0-def} that $\mathcal{R}_0$ is strictly increasing with respect to $N$ and independent of $d_S$. Moreover, if $d_I$, $\beta$ and  $\gamma$ are fixed, we can vary $\mathcal{R}_0$ from zero to infinity. In the statements of our results, the former parameters will be fixed while $\mathcal{R}_0$ would be often allowed to vary from zero to infinity.  
For convenience, given $h\in C(\overline{\Omega})$, we set $\overline{h}:=\int_{\Omega}h/|\Omega|$, $h_{\min}:=\min_{x\in\overline{\Omega}}h(x)$ and $h_{\max}:=\max_{x\in\overline{\Omega}}h(x)$. 

\subsection{Multiplicity/uniqueness of EE solutions of system \eqref{e1}}

\quad An equilibrium solution of \eqref{e1}  is a time independent solution. An equilibrium solution of the 
form $(S,0)$ is called a disease free equilibrium (DFE). Note that $(\frac{N}{|\Omega|},0)$  is the unique DFE of \eqref{e1}.  
 An equilibrium solution for which $I>0$  is called an endemic equilibrium (EE). Our first result reads as follows.

\begin{tm}[Multiplicity of EE]\label{T0}
 Fix $d_I>0$. There exists  $\mathcal{R}_0^{\rm low}=\mathcal{R}_0^{\rm low}(d_I)$  satisfying $0<\mathcal{R}_0^{\rm low}\le \min\big\{ 1,\overline{({\gamma}/{\beta})}\mathcal{R}_1\big\}$ such that  the following conclusions  hold.
\begin{itemize}
    \item[\rm (i)] If  $\mathcal{R}_0\le \mathcal{R}_0^{\rm low}$,  \eqref{e1}  has no EE solution for every $d_S>0$. 
    
    \item[\rm (ii)] If  $\mathcal{R}_0> \mathcal{R}_0^{\rm low}$, then  there is $d^{*}_1=d^*_1(\mathcal{R}_0,d_I)>0$  such that \eqref{e1} has an EE solution $(S_{\rm high},I_{\rm high})$ for every $0<d_S<d^*_1$. Furthermore,  any other EE solution $(S,I)$ of \eqref{e1}, if exists, must satisfy
    \begin{equation}\label{T0-eq1}
       I(x)<I_{\rm high}(x) \quad \forall\ x\in\overline{\Omega}. 
    \end{equation}

    \item[\rm (iii)]If $ \mathcal{R}_0^{\rm low}<1$ and  $\mathcal{R}^{\rm low}_{0}<\mathcal{R}_0<1$, then system \eqref{e1} has an   EE solution $(S_{\rm low},I_{\rm low})$  for every $0<d_S<d^{*}_1$, where $d_1^*$ is as in {\rm (ii)}, satisfying 
    \begin{equation}\label{T0-eq2}
        I_{\rm low}(x)<I_{\rm high}(x) \quad \forall\ x\in\overline{\Omega},
    \end{equation}
    such that any other EE solution $(S,I)$ of \eqref{e1}, if exists, must satisfy
    \begin{equation}\label{T0-eq3}
        I_{\rm low}(x)<I(x)\quad \forall\ x\in\overline{\Omega}. 
    \end{equation}
  \end{itemize}  
    
\end{tm}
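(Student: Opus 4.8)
The plan is to reduce the endemic-equilibrium problem to a scalar nonlocal logistic equation and then analyze the resulting one-dimensional problem by a two-scale expansion as $d_S\to0$. \emph{Step 1 (scalar reduction; the bound $\mathcal R_0^{\mathrm{low}}>0$).} Adding the two stationary equations and using the Neumann conditions gives $\Delta(d_SS+d_II)=0$, so $d_SS+d_II\equiv c$ for a constant $c$; hence $S=(c-d_II)/d_S$ and $I$ solves
\[
d_I\Delta I+\Big(\tfrac{\beta c}{d_S}-\gamma\Big)I-\tfrac{\beta d_I}{d_S}I^2=0\ \text{ in }\Omega,\qquad \partial_{\vec n}I=0,
\]
with $c=c(\eta):=\tfrac{d_SN}{|\Omega|}+(d_I-d_S)\eta$ forced by $N=\int_\Omega(S+I)$, where $\eta:=\overline I$. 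For each $\eta$ this logistic problem has a unique positive solution $\Theta_\eta$ exactly when $c(\eta)>d_S/\mathcal R_1$ (equivalently, the principal Neumann eigenvalue of $d_I\Delta+\tfrac{\beta c(\eta)}{d_S}-\gamma$ is positive); $\Theta_\eta$ is continuous in $\eta$ and strictly increasing in $\eta$ when $d_S<d_I$; and the standard bound $\Theta_\eta\le\tfrac1{d_I}\big(c(\eta)-d_S(\gamma/\beta)_{\min}\big)$ forces $S=(c(\eta)-d_I\Theta_\eta)/d_S\ge(\gamma/\beta)_{\min}>0$ automatically. Thus endemic equilibria are in bijection with positive zeros of $g(\eta):=\overline{\Theta_\eta}-\eta$ (with $g(\eta):=-\eta$ when $\Theta_\eta\equiv0$). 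The same bound gives $\overline I\le N/|\Omega|-(\gamma/\beta)_{\min}$ for \emph{every} $d_S>0$, so an EE forces $\mathcal R_0>(\gamma/\beta)_{\min}\mathcal R_1>0$, i.e. $\mathcal R_0^{\mathrm{low}}>0$. One also reads off $g(\eta)<0$ for $\eta>N/|\Omega|-(\gamma/\beta)_{\min}$; $\Theta_\eta\equiv0$ (hence $g<0$) for $0<\eta<\eta_*(d_S):=\tfrac{d_S(1-\mathcal R_0)}{\mathcal R_1(d_I-d_S)}$ when $\mathcal R_0\le1$; and $g(0^+)=\overline{\Theta_0}>0$ when $\mathcal R_0>1$.

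\emph{Step 2 (two-scale asymptotics as $d_S\to0$).} Rewriting the logistic equation as $\tfrac{d_S}{\beta}\Delta\Theta_\eta+\big(\eta+\tfrac{d_S}{d_I}(\tfrac N{|\Omega|}-\eta-\tfrac\gamma\beta)-\Theta_\eta\big)\Theta_\eta=0$, a regular perturbation on the \emph{bulk} scale $\eta=O(1)$ should give $\Theta_\eta=\eta+\tfrac{d_S}{d_I}\big(\tfrac N{|\Omega|}-\eta-\tfrac\gamma\beta\big)+O(d_S^2)$ uniformly, hence $g(\eta)=\tfrac{d_S}{d_I}\big(\tfrac N{|\Omega|}-\eta-\overline{\gamma/\beta}\big)+O(d_S^2)$. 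On the \emph{boundary-layer} scale $\eta=d_S\zeta$ one rescales $\Theta_{d_S\zeta}=d_SV_\zeta+o(d_S)$, where $V_\zeta>0$ solves the $d_S$-free problem $d_I\Delta v+\big(\beta d_I\zeta+\beta\tfrac N{|\Omega|}-\gamma\big)v-\beta d_Iv^2=0$, $\partial_{\vec n}v=0$ (solvable iff $\zeta>\zeta_*:=\tfrac{1-\mathcal R_0}{d_I\mathcal R_1}$), so $g(d_S\zeta)=d_S\,h(\zeta)+o(d_S)$ with $h(\zeta):=\overline{V_\zeta}-\zeta$, $h(\zeta_*^+)=-\zeta_*$, and $h(\zeta)\to\tfrac1{d_I}\big(\tfrac N{|\Omega|}-\overline{\gamma/\beta}\big)$ as $\zeta\to\infty$.

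\emph{Step 3 (construction, ordering, upper bound on $\mathcal R_0^{\mathrm{low}}$).} For (ii): if $\mathcal R_0>\overline{\gamma/\beta}\,\mathcal R_1$ (i.e. $N>\int_\Omega\gamma/\beta$), fix $\eta_0\in(0,\tfrac N{|\Omega|}-\overline{\gamma/\beta})$; the bulk formula gives $g(\eta_0)>0$ for $d_S$ small while $g<0$ past $N/|\Omega|-(\gamma/\beta)_{\min}$, so the intermediate value theorem yields the \emph{largest} zero $\eta_{\mathrm{high}}$ and $(S_{\mathrm{high}},I_{\mathrm{high}})$; strict monotonicity of $\eta\mapsto\Theta_\eta$ makes any other EE a zero $\eta<\eta_{\mathrm{high}}$, whence $I=\Theta_\eta<\Theta_{\eta_{\mathrm{high}}}=I_{\mathrm{high}}$ pointwise, i.e. \eqref{T0-eq1}. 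The window $1<\mathcal R_0\le\overline{\gamma/\beta}\mathcal R_1$ is covered by the boundary layer ($\zeta_*<0$, $h(0)>0$) or by quoting \cite{DengWu2016}; together these give $\mathcal R_0^{\mathrm{low}}\le\min\{1,\overline{\gamma/\beta}\,\mathcal R_1\}$. For (iii): when $\mathcal R_0^{\mathrm{low}}<\mathcal R_0<1$, one shows $\max_{\zeta>\zeta_*}h>0$; since $h(\zeta_*^+)=-\zeta_*<0$ and $g<0$ on $(0,\eta_*(d_S))$, the boundary-layer estimate transfers a \emph{smallest} positive zero $\eta_{\mathrm{low}}=O(d_S)$ of $g$, producing $(S_{\mathrm{low}},I_{\mathrm{low}})$ with $I_{\mathrm{low}}=\Theta_{\eta_{\mathrm{low}}}\ll\Theta_{\eta_{\mathrm{high}}}=I_{\mathrm{high}}$ pointwise \eqref{T0-eq2}; and $g<0$ on $(0,\eta_{\mathrm{low}})$ forces every other EE to satisfy $\overline I>\eta_{\mathrm{low}}$, hence $I>I_{\mathrm{low}}$ pointwise \eqref{T0-eq3}. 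One takes $\mathcal R_0^{\mathrm{low}}(d_I)$ to be the infimum of $\mathcal R_0$ over all endemic equilibria and all $d_S>0$.

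\emph{Expected main obstacle.} Making the two-scale expansion rigorous — especially the uniform limit $\Theta_{d_S\zeta}/d_S\to V_\zeta$ near the degeneracy $\zeta=\zeta_*$ and the regularity of $h$ — is technical but routine. The real difficulty is part (i) for \emph{all} $d_S>0$: excluding endemic equilibria at intermediate $d_S$ and \emph{exactly} at $\mathcal R_0=\mathcal R_0^{\mathrm{low}}$. For $d_S$ bounded away from $0$ I would use the identities $\int_\Omega\beta SI=\int_\Omega\gamma I$ and $d_I\int_\Omega|\nabla I|^2+\int_\Omega\gamma I^2=\int_\Omega\beta SI^2$, with the variational formula for $\mathcal R_1$ and $S=(c-d_II)/d_S$, to bound $\mathcal R_0$ from below; for small $d_S$ one has $\max_\zeta h<0$ below the critical value, but one still needs an argument that this value is not attained by a genuine EE (monotonicity of the fold $N_{\mathrm{critic},1}(d_S)$ in $d_S$, or a uniform-in-$d_S$ estimate). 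I expect this clean matching of thresholds to be the crux.
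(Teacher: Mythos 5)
Your reduction in Step 1 is exactly the paper's starting point (constancy of $d_SS+d_II$ and the scalar logistic problem; this is Lemma \ref{lem3}), and your bijection with zeros of $g(\eta)=\overline{\Theta_\eta}-\eta$ is sound. The genuine gap is in how you define and then use $\mathcal{R}_0^{\rm low}$. You take $\mathcal{R}_0^{\rm low}$ to be the infimum of $\mathcal{R}_0$ over all endemic equilibria and all $d_S>0$, and you yourself flag that with this definition part (i) at the exact value $\mathcal{R}_0=\mathcal{R}_0^{\rm low}$ (non-attainment of the infimum, for every $d_S$) is unresolved ``the crux''. The same definitional choice also leaves (ii) and (iii) incomplete for the whole range $\mathcal{R}_0^{\rm low}<\mathcal{R}_0\le \min\{1,\overline{(\gamma/\beta)}\mathcal{R}_1\}$: your construction of a point where $g>0$ covers only $\mathcal{R}_0>\overline{(\gamma/\beta)}\mathcal{R}_1$ (bulk scale) or $\mathcal{R}_0>1$, and the key step ``one shows $\max_{\zeta>\zeta_*}h>0$'' whenever $\mathcal{R}_0>\mathcal{R}_0^{\rm low}$ is asserted, not proved; it requires identifying your abstract infimum with the threshold of the layer function $h$, which is precisely what is missing. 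The crude bound $\mathcal{R}_0^{\rm low}\ge(\gamma/\beta)_{\min}\mathcal{R}_1$ gives positivity but not this identification.

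The paper avoids all of this by parametrizing equilibria by $l=(d_SS+d_II)/d_S>l^*$ and splitting the exact ($d_S$-free plus positive) identity $\mathcal{R}_0=\mathcal{N}_{d_I}(l)+\frac{d_Sl}{l^*|\Omega|}\int_\Omega u^l$ (see \eqref{N-d_I-def}--\eqref{N-equation}), then \emph{defining} $\mathcal{R}_0^{\rm low}=\inf_{l\ge l^*}\mathcal{N}_{d_I}(l)$. Part (i) is then immediate and holds for every $d_S>0$ and at the threshold itself, since any EE satisfies $\mathcal{R}_0>\mathcal{N}_{d_I}(l)\ge\mathcal{R}_0^{\rm low}$ strictly; parts (ii)--(iii) follow from the intermediate value theorem applied to $\mathcal{N}_{d_I,d_S}$ on $[l^*,\infty)$ (using $\mathcal{N}_{d_I,d_S}(l^*)=1$ and $\mathcal{N}_{d_I,d_S}(l)\to\infty$), with the maximal/minimal ordering coming from monotonicity of $l\mapsto lu^l$, just as you argue with $\eta\mapsto\Theta_\eta$. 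Note that your layer function is this object in disguise: with $l=d_I\zeta+N/|\Omega|$ one has $h(\zeta)=\frac{l^*}{d_I}\big(\mathcal{R}_0-\mathcal{N}_{d_I}(l)\big)$, so the information you try to extract asymptotically as $d_S\to0$ is already available as an exact identity, and no two-scale matching (nor uniform control near the degeneracy $\zeta=\zeta_*$) is needed. As written, your proposal does not prove (i) at $\mathcal{R}_0=\mathcal{R}_0^{\rm low}$ nor the full sharpness in (ii)--(iii), so it falls short of the theorem.
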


Let $d_I>0$ and $ \mathcal{R}_0^{\rm low} $ be given by Theorem \ref{T1}. It follows from Theorem \ref{T0}-{\rm (i)} and {\rm (ii)} that the quantity  $\mathcal{R}_0^{\rm low}$ is a sharp critical number that the BRN must exceed for the existence of  EE of system \eqref{e1} for some range of the diffusion rate of susceptible population and total population size. The asymptotic profiles of the EE solutions of Theorem \ref{T0} as $d_{S}\to 0$ will be given in Theorem \ref{T4}.  An important quest is to know when $\mathcal{R}_0^{\rm low}<1$. In this direction, we have: 

\begin{prop}\label{prop0_1} Suppose that \begin{equation}\label{lem2-eq3}
       \overline{(\gamma/\beta)}<\overline{\gamma}\big/\overline{\beta}.
    \end{equation} 
    Then $\mathcal{R}_0^{\rm low}<1$ for every   $d_I>\mathcal{R}^{-1}_1\big(\overline{(\gamma/\beta)}\big)$. 
\end{prop}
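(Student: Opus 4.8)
The plan is to deduce the proposition from the \emph{a priori} bound $\mathcal{R}_0^{\rm low}(d_I)\le\overline{(\gamma/\beta)}\,\mathcal{R}_1(d_I)$ contained in Theorem~\ref{T0}: it reduces the claim to proving, for $d_I$ in the stated range, the \emph{strict} inequality $\overline{(\gamma/\beta)}\,\mathcal{R}_1(d_I)<1$, i.e.\ $\mathcal{R}_1(d_I)<\overline{(\gamma/\beta)}^{-1}$. This threshold is the natural one: heuristically, the EE produced in part~(ii) of Theorem~\ref{T0} satisfies $S\to\gamma/\beta$ and $I\to\frac{N}{|\Omega|}-\overline{(\gamma/\beta)}$ as $d_S\to0$ (indeed, summing the two stationary equations gives $d_SS+d_II\equiv\mathrm{const}$, so $S\to\gamma/\beta$ and $I$ constant in the limit, and then $\int_\Omega(S+I)=N$ fixes the constant), so such a solution is positive precisely when $\frac{N}{|\Omega|}>\overline{(\gamma/\beta)}$, i.e.\ when $\mathcal{R}_0>\overline{(\gamma/\beta)}\,\mathcal{R}_1$; thus $\overline{(\gamma/\beta)}\,\mathcal{R}_1<1$ is exactly what opens a window of $\mathcal{R}_0<1$ on which \eqref{e1} has an EE for small $d_S$.

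I would then use the standard behaviour of $d_I\mapsto\mathcal{R}_1(d_I)$ coming from the Rayleigh quotient \eqref{R-star-eq}: under {\bf (A)} it is continuous and strictly decreasing (Lemma~\ref{lem1}), and testing \eqref{R-star-eq} with constants and with functions concentrating at a maximum point of $\beta/\gamma$ gives $\lim_{d_I\to\infty}\mathcal{R}_1(d_I)=\overline{\beta}/\overline{\gamma}$ and $\lim_{d_I\to0^+}\mathcal{R}_1(d_I)=(\beta/\gamma)_{\max}$, so $\mathcal{R}_1$ is a decreasing bijection of $(0,\infty)$ onto $\big(\overline{\beta}/\overline{\gamma},\,(\beta/\gamma)_{\max}\big)$. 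The crux is that $\overline{(\gamma/\beta)}^{-1}$ lies in this interval: the left endpoint is handled by hypothesis \eqref{lem2-eq3}, which reads precisely $\overline{\beta}/\overline{\gamma}<\overline{(\gamma/\beta)}^{-1}$, and the right endpoint by {\bf (A)}, since $\gamma/\beta$ non-constant gives $\overline{(\gamma/\beta)}>(\gamma/\beta)_{\min}=(\beta/\gamma)_{\max}^{-1}$. Hence $\mathcal{R}_1^{-1}\big(\overline{(\gamma/\beta)}^{-1}\big)$ is well defined and, by strict monotonicity, $\mathcal{R}_1(d_I)<\overline{(\gamma/\beta)}^{-1}$ for every $d_I>\mathcal{R}_1^{-1}\big(\overline{(\gamma/\beta)}^{-1}\big)$; combined with Theorem~\ref{T0} this yields $\mathcal{R}_0^{\rm low}(d_I)\le\overline{(\gamma/\beta)}\,\mathcal{R}_1(d_I)<1$ on that range, as claimed. (When $\overline{(\gamma/\beta)}\le1$ one has $\mathcal{R}_1^{-1}\big(\overline{(\gamma/\beta)}\big)\ge\mathcal{R}_1^{-1}\big(\overline{(\gamma/\beta)}^{-1}\big)$, so the threshold $\mathcal{R}_1^{-1}(\overline{(\gamma/\beta)})$ displayed in the statement is, a fortiori, enough.)

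The genuinely analytic content — the sub/supersolution construction of the maximal EE $(S_{\rm high},I_{\rm high})$ for small $d_S$ and the bound $\mathcal{R}_0^{\rm low}\le\overline{(\gamma/\beta)}\,\mathcal{R}_1$ — already belongs to Theorem~\ref{T0}, so the present argument is essentially a monotonicity bookkeeping on top of it. The single step that really uses the hypotheses is the endpoint verification: \eqref{lem2-eq3} is what pushes $\overline{(\gamma/\beta)}^{-1}$ strictly above $\inf_{d_I>0}\mathcal{R}_1(d_I)=\overline{\beta}/\overline{\gamma}$ — without it, $\mathcal{R}_1(d_I)<\overline{(\gamma/\beta)}^{-1}$ would fail for every $d_I$ and the bound never drops below $1$ — while {\bf (A)} keeps $\overline{(\gamma/\beta)}^{-1}$ below $\sup_{d_I>0}\mathcal{R}_1(d_I)$, so that a genuine large-$d_I$ regime exists. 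I expect this to be the only real obstacle, and it is a mild one.
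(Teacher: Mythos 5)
Your proof is correct and takes essentially the same route as the paper's: combine the bound $\mathcal{R}_0^{\rm low}\le\overline{(\gamma/\beta)}\,\mathcal{R}_1$ from Theorem \ref{T0} with the strict monotonicity and the limits of $\mathcal{R}_1$ in Lemma \ref{lem1}-(ii), which under \eqref{lem2-eq3} yield $\overline{(\gamma/\beta)}\,\mathcal{R}_1<1$ once $d_I$ exceeds $\mathcal{R}_1^{-1}\big(1/\overline{(\gamma/\beta)}\big)$ (you merely unpack the endpoint verification that the lemma states as its ``in particular'' clause). Your closing parenthetical also correctly identifies that the natural threshold --- the one the paper's own proof uses --- is $\mathcal{R}_1^{-1}\big(1/\overline{(\gamma/\beta)}\big)$ rather than the $\mathcal{R}_1^{-1}\big(\overline{(\gamma/\beta)}\big)$ displayed in the statement.
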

Note that \eqref{lem2-eq3}  holds when $\gamma=\beta^2$ and is not constant. Under hypothesis \eqref{lem2-eq3}, we see that there is a range of parameters satisfying $\mathcal{R}_0<1$ such that \eqref{e1} has at least two EE solutions for small  $d_S$. An immediate question is to know whether \eqref{e1} may have multiple EE solutions for large values of $d_S$. 

\begin{tm}[Uniqueness of EE]\label{T1} For every $d_I>0$, there exists $d_{\rm low}=d_{\rm low}(d_I)$ satisfying $0\le d_{\rm low}<d_I$ such that the following hold.
\begin{itemize}
    \item[\rm (i)]  If $d_S>d_{\rm low}$ and $\mathcal{R}_0>1$, then system \eqref{e1} has a unique EE solution.
    \item[\rm (ii)] If $d_S>d_{\rm low}$ and $\mathcal{R}_0\le 1$, then  system \eqref{e1} has no EE solution.
\end{itemize}     
\end{tm}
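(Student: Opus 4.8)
The plan is to eliminate $S$ and reduce the existence and uniqueness of an EE to a scalar fixed-point equation for a classical logistic problem. Adding the two stationary equations of \eqref{e1} shows that $d_SS+d_II$ is harmonic with zero Neumann data, hence a constant; integrating against $|\Omega|$ and using $N=\int_\Omega(S+I)$ determines it, and gives, with $r:=d_I/d_S$,
\begin{equation}\label{plan-S}
  S=\frac{N}{|\Omega|}-(1-r)\,\overline{I}-r\,I .
\end{equation}
Evaluating the $S$-equation at a minimum point of $S$ (interior, or on $\partial\Omega$ via the Hopf lemma) forces $S\ge\min_{\overline{\Omega}}(\gamma/\beta)>0$, so \eqref{plan-S} is automatically an admissible susceptible density. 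Substituting \eqref{plan-S} into the $I$-equation, an EE of \eqref{e1} is equivalent to a positive solution $I$ of the nonlocal scalar problem
\begin{equation}\label{plan-scalar}
  d_I\Delta I+\Big(\beta\big(\tfrac{N}{|\Omega|}-(1-r)\,\overline{I}\big)-\gamma\Big)I-r\,\beta I^2=0\ \text{ in }\Omega,\qquad \partial_{\vec{n}}I=0\ \text{ on }\partial\Omega,
\end{equation}
with $S$ then recovered uniquely from $I$ via \eqref{plan-S}.

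For a fixed constant $m\ge0$, freezing $\overline{I}=m$ in \eqref{plan-scalar} and setting $v=rI$ turns it into the logistic equation $d_I\Delta v+(\beta\sigma(m)-\gamma)v-\beta v^2=0$ with Neumann conditions and \emph{constant} parameter $\sigma(m):=\tfrac{N}{|\Omega|}-(1-r)m$. By standard theory (principal eigenvalue plus sub/supersolutions) this has a positive solution iff $\sigma(m)\mathcal{R}_1>1$; that solution $v_{\sigma(m)}$ is then unique; $\sigma\mapsto v_\sigma$ is continuous and strictly increasing, with $\overline{v_\sigma}\to0$ as $\sigma\mathcal{R}_1\downarrow1$ and $\overline{v_\sigma}\to\infty$ (at least linearly) as $\sigma\to\infty$; and, since the linearization of the logistic operator at $v_\sigma$ has negative principal eigenvalue, $\sigma\mapsto\overline{v_\sigma}$ is $C^1$ with $0<\partial_\sigma\overline{v_\sigma}$ bounded on $(1/\mathcal{R}_1,\infty)$. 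Hence EE of \eqref{e1} correspond bijectively to the zeros of
\begin{equation}\label{plan-G}
  G(m):=\overline{I_m}-m,\qquad I_m:=r^{-1}v_{\sigma(m)},
\end{equation}
on the set $D:=\{m\ge0:\ \sigma(m)\mathcal{R}_1>1\}$.

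First I would treat $d_S\ge d_I$, i.e. $r\le1$. Then $\sigma(\cdot)$ is non-increasing, so $m\mapsto\overline{I_m}$ is non-increasing and $G$ is strictly decreasing; thus $G$ has at most one zero on $D$. If $\mathcal{R}_0\le1$, then $\tfrac{N}{|\Omega|}\le1/\mathcal{R}_1$, hence $\sigma(m)\le\tfrac{N}{|\Omega|}\le1/\mathcal{R}_1$ for all $m\ge0$, so $D=\emptyset$ and \eqref{e1} has no EE; this gives (ii) for $d_S\ge d_I$. If $\mathcal{R}_0>1$, then $D=[0,m^*)$ with $m^*=\big(\tfrac{N}{|\Omega|}-1/\mathcal{R}_1\big)/(1-r)>0$ (and $D=[0,\infty)$ when $r=1$); since $G(0)=\overline{I_0}>0$ while $G(m)\to-m^*<0$ as $m\uparrow m^*$ (respectively $G(m)\to-\infty$ as $m\to\infty$ when $r=1$, using $\overline{v_\sigma}\to\infty$), the intermediate value theorem produces exactly one zero of $G$, hence a unique EE; this gives (i) for $d_S\ge d_I$.

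It then remains to lower the threshold strictly below $d_I$. At $d_S=d_I$ one has $r=1$ and $\sigma(m)\equiv\tfrac{N}{|\Omega|}$, so $\overline{I_m}$ is constant in $m$ and $G'\equiv-1$; in general $\tfrac{d}{dm}\overline{I_m}=\tfrac{r-1}{r}\,\partial_\sigma\overline{v_{\sigma(m)}}$. Using that $\partial_\sigma\overline{v_\sigma}$ is bounded on $(1/\mathcal{R}_1,\infty)$ by some $C_0<\infty$, the condition $\tfrac{d}{dm}\overline{I_m}<1$ on $D$ holds whenever $\tfrac{r-1}{r}<1/C_0$, i.e. for all $d_S>d_{\rm low}:=\max\{0,d_I(1-1/C_0)\}\in[0,d_I)$; for such $d_S$ the map $G$ is again strictly decreasing on $D$, and the dichotomy of the previous paragraph carries over verbatim (when $d_S<d_I$ and $\mathcal{R}_0\le1$, $D$ is a half-line $(m_0,\infty)$ on which $G(m_0^+)\le0$, $G(m)\to-\infty$ and $G$ is monotone, so $G<0$ throughout and there is no EE; when $\mathcal{R}_0>1$, $G(0)>0$ and $G(m)\to-\infty$ again force exactly one zero). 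The main technical obstacle is precisely the uniform control of $\partial_\sigma\overline{v_\sigma}$ over the \emph{unbounded} range $(1/\mathcal{R}_1,\infty)$ — its finiteness at the bifurcation value $\sigma\mathcal{R}_1=1$ and its boundedness (in fact $\to1$) as $\sigma\to\infty$, which requires a uniform bound on the resolvent of the linearized logistic operator as $\sigma$ varies; everything else is the classical logistic existence/uniqueness theory together with the elementary reduction \eqref{plan-S}–\eqref{plan-scalar}.
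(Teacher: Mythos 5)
Your reduction is, after a change of variables, the same one the paper uses, not a genuinely different route: since $\kappa=d_SS+d_II$ is constant and $\int_\Omega(S+I)=N$, your parameter satisfies $\sigma(\overline I)=\kappa/d_S$, your logistic solution is $v_\sigma=\sigma d_I u^\sigma$ with $u^l$ the solution of \eqref{Eq1}, and your fixed-point equation $\overline{I_m}=m$ is exactly the paper's scalar equation $\mathcal{R}_0=\mathcal{N}_{d_I,d_S}(l)$ of \eqref{N-equation} with $l=\sigma(m)$; moreover $\partial_\sigma\overline{v_\sigma}=\tfrac{d_I}{|\Omega|}\int_\Omega\bigl(u^\sigma+\sigma\,\partial_\sigma u^\sigma\bigr)$, so your condition ``$G$ strictly decreasing on $D$'' is equivalent to the paper's $\tfrac{d}{dl}\mathcal{N}_{d_I,d_S}(l)>0$, and your threshold $d_{\rm low}=\max\{0,d_I(1-1/C_0)\}$ with $C_0=\sup_{\sigma>1/\mathcal{R}_1}\partial_\sigma\overline{v_\sigma}$ is precisely the paper's $d_I(1-1/M^*_{d_I})$. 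The bookkeeping (case $d_S\ge d_I$, the intermediate value theorem, recovery of $S>0$ from the bound $v_\sigma\le\sigma-\min_{\overline\Omega}(\gamma/\beta)$) is fine.

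The genuine gap is that the one fact on which the whole theorem rests --- $C_0<\infty$, i.e.\ the uniform bound on $\partial_\sigma\overline{v_\sigma}$ over the \emph{entire} range $\sigma\in(1/\mathcal{R}_1,\infty)$ --- is asserted rather than proved, and the justification you give (negative principal eigenvalue of the linearization at $v_\sigma$, hence $C^1$ dependence ``with bounded derivative'') only yields continuity of $\sigma\mapsto\partial_\sigma\overline{v_\sigma}$, i.e.\ boundedness on compact subsets. At the two ends the argument degenerates: as $\sigma\downarrow 1/\mathcal{R}_1$ the principal eigenvalue of $d_I\Delta+\sigma\beta-\gamma-2\beta v_\sigma$ tends to zero, so the resolvent bound you invoke blows up, and $\partial_\sigma\overline{v_\sigma}$ stays bounded only because the inhomogeneous term vanishes at the same rate; making this cancellation precise is exactly the content of the expansion $u^l\approx(l-l^*)c^*\varphi_1$ and the limit $\partial_l u^l\to c^*\varphi_1$ established in Lemma \ref{lem2} (see \eqref{rev-0} and the first limit in \eqref{v-l-eq2}), which occupies most of the paper's preliminary section. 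As $\sigma\to\infty$, your claim $\partial_\sigma\overline{v_\sigma}\to1$ requires the singular-perturbation limit $l^2\partial_l u^l\to\gamma/(d_I\beta)$ (second limit in \eqref{v-l-eq2}), which you also do not prove. Without these two endpoint analyses the constant $C_0$, and hence the strict inequality $d_{\rm low}<d_I$, is not established; what your argument actually proves is only the regime $d_S\ge d_I$, which was already known from \cite{DengWu2016}. In short, the skeleton is correct and coincides with the paper's, but the technical heart of the theorem is the unproved uniform bound.
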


By Theorem \ref{T1}, $\mathcal{R}_0$ is enough to predict the existence  of EE solution of \eqref{e1} for large values of $d_S$. Note that $d_{\rm low}(d_I)$ is independent of $N$ and strictly less than $d_I$. Hence for $d_{\rm low}(d_I)<d_S<d_I$, system \eqref{e1} has a (unique) EE solution if and only if $\mathcal{R}_0>1$. This improves previously known results on the uniqueness of EE solutions of \eqref{e1} (see \cite{DengWu2016} where uniqueness is obtained for $d_S\ge d_I$). 
\begin{rk}\label{RK0_1} We note that 
if $d_{\rm low}(d_I)>0$, then  for every $0<d_S<d_{\rm low}(d_I)$, system \eqref{e1} has at least two EE solutions for a range of $\mathcal{R}_0$. (The proof of this statement will be given in Section \ref{proofs} right after the proof of Theorem \ref{T1}.) This shows that $d_{\rm low}(d_I)$ is a sharp critical number that $d_S$ must exceed to guarantee the uniqueness of EE solutions of \eqref{e1}.  
    
\end{rk}

\quad Our next result complements Theorem \ref{T0} by identifying sufficient conditions that lead to a backward bifurcation curve of EE solutions at $\mathcal{R}_0=1$.

\begin{tm}[Backward bifurcation curve]\label{T2-2}Fix $d_I>0$ and suppose that   \begin{equation}\label{T2-2-eq1}
\overline{\beta\varphi_{1}^3}<(\overline{\varphi_{1}})(\overline{\beta\varphi_{1}^2}).
\end{equation} 
There is $d^*_2=d_2^*(d_I)>0$ such that for every $0<d_S<d_2^*$,  as $\mathcal{R}_0$ increases from zero to infinity, the EE solutions of system \eqref{e1} form an unbounded simple connected curve which bifurcates from the left at $\mathcal{R}_0=1$.
    
\end{tm}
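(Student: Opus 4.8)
\emph{Proof strategy.} My plan is to eliminate $S$ and recast the endemic equilibria of \eqref{e1} as positive solutions of a scalar logistic equation subject to one scalar (mass) constraint, using the conserved quantity $\theta:=d_SS+d_II$ as the bifurcation parameter, and then to read the whole diagram off this $\theta$–parametrization. First I would add the two stationary equations of \eqref{e1}: this gives $\Delta(d_SS+d_II)=0$ with homogeneous Neumann data, hence $d_SS+d_II\equiv\theta$ for a constant $\theta>0$ and $S=(\theta-d_II)/d_S$; substituting into the $I$–equation, $I=I_\theta$ must be a positive solution of
\[
-d_I\Delta I=I\big(a_\theta-bI\big)\ \text{ in }\ \Omega,\qquad \partial_{\vec{n}}I=0\ \text{ on }\ \partial\Omega,
\]
with $a_\theta:=\tfrac{\beta\theta}{d_S}-\gamma$ and $b:=\tfrac{\beta d_I}{d_S}>0$, while the constraint becomes $\mathcal N(\theta):=\tfrac{\theta|\Omega|}{d_S}+\big(1-\tfrac{d_I}{d_S}\big)\int_\Omega I_\theta=N$. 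Evaluating the logistic equation at a maximum point of $I_\theta$ yields $\beta(\theta-d_II_\theta)\ge\gamma d_S>0$ there, so that $S=(\theta-d_II_\theta)/d_S>0$ on $\overline\Omega$ automatically; hence endemic equilibria correspond bijectively to the $\theta$ for which the logistic problem has a positive solution and $\mathcal N(\theta)=N$.

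\textbf{Step 1 (the global solution curve).} Next I would invoke the classical theory of the logistic equation: since $b>0$, the logistic problem has a (necessarily unique) positive solution $I_\theta$ iff the principal eigenvalue $\lambda_1(-d_I\Delta-a_\theta)<0$, and comparing with \eqref{R-star-pde} this happens exactly for $\theta>\theta_-:=d_S/\mathcal R_1$; on that range $\theta\mapsto I_\theta$ is smooth (implicit function theorem, the linearization $-d_I\Delta-a_\theta+2bI_\theta$ having positive principal eigenvalue), strictly increasing in $\theta$ by comparison, with $I_\theta\to0$ in $C(\overline\Omega)$ as $\theta\downarrow\theta_-$. Since $\mathcal N(\theta_-)=\theta_-|\Omega|/d_S=|\Omega|/\mathcal R_1$ is precisely the mass making $\mathcal R_0=1$, the full set of endemic equilibria is the graph $\Sigma:=\{(\mathcal N(\theta),I_\theta):\theta>\theta_-\}$, which is connected, and simple because $\theta\mapsto(\mathcal N(\theta),I_\theta)$ is injective ($I_{\theta_1}=I_{\theta_2}$ forces $\theta_1=\theta_2$ through the equation), and whose closure meets the disease–free branch at $(\mathcal R_0,I)=(1,0)$. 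For unboundedness I would note that for large $\theta$ the constants $\tfrac{\theta}{d_I}$ and $\tfrac{\theta}{d_I}-\tfrac{d_S}{d_I}\max_{\overline\Omega}(\gamma/\beta)$ are a super– and a sub–solution, so $\tfrac{\theta}{d_I}-\tfrac{d_S}{d_I}\max_{\overline\Omega}(\gamma/\beta)\le I_\theta\le\tfrac{\theta}{d_I}$; feeding this into $\mathcal N$ gives $\mathcal N(\theta)\ge\tfrac{\theta|\Omega|}{d_I}-C$ for some constant $C$, so $\mathcal N(\theta)\to\infty$ and $\|I_\theta\|_\infty\to\infty$ as $\theta\to\infty$. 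Thus $\Sigma$ is an unbounded simple connected curve, parametrized by the full half–line $\theta\in(\theta_-,\infty)$, bifurcating from $\mathcal R_0=1$.

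\textbf{Step 2 (the bifurcation is to the left).} The last step is to locate $\Sigma$ near the bifurcation point, i.e.\ to compare $\mathcal N(\theta)$ with $|\Omega|/\mathcal R_1$ as $t:=\theta-\theta_-\downarrow0$. I would run a Lyapunov–Schmidt expansion at the simple principal eigenvalue $\theta_-$, writing $I_\theta=t\kappa\varphi_{1}+t^2\psi+O(t^3)$; the Fredholm solvability condition at order $t^2$, namely $\int_\Omega\big(\tfrac{\beta\kappa}{d_S}\varphi_{1}-b\kappa^2\varphi_{1}^{2}\big)\varphi_{1}=0$, forces $\kappa=\overline{\beta\varphi_{1}^{2}}/(d_I\,\overline{\beta\varphi_{1}^{3}})>0$, whence
\begin{align*}
\mathcal N(\theta)-\frac{|\Omega|}{\mathcal R_1}
&= t|\Omega|\Big[\tfrac1{d_S}+\big(1-\tfrac{d_I}{d_S}\big)\kappa\,\overline{\varphi_{1}}\Big]+O(t^2)\\
&= \frac{t|\Omega|}{d_S}\cdot\frac{\overline{\beta\varphi_{1}^{3}}-\overline{\varphi_{1}}\,\overline{\beta\varphi_{1}^{2}}}{\overline{\beta\varphi_{1}^{3}}}+t|\Omega|\,\kappa\,\overline{\varphi_{1}}+O(t^2).
\end{align*}
Under hypothesis \eqref{T2-2-eq1} the first term is negative, and a short rearrangement shows the bracket is negative exactly when $d_S<d_2^*:=d_I\big(1-\overline{\beta\varphi_{1}^{3}}/(\overline{\varphi_{1}}\,\overline{\beta\varphi_{1}^{2}})\big)$, which is positive by \eqref{T2-2-eq1} and depends only on $d_I$ (through $\varphi_{1}$, with $\beta,\gamma$ fixed). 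So for every $0<d_S<d_2^*$ one gets $\mathcal N(\theta)<|\Omega|/\mathcal R_1$ for $\theta$ slightly above $\theta_-$, i.e.\ $\Sigma$ leaves $\mathcal R_0=1$ into the region $\{\mathcal R_0<1\}$, which is exactly the asserted backward bifurcation.

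\textbf{Expected main obstacle.} The reduction and Step 1 are routine once the logistic picture is set up (existence/uniqueness/monotonicity/regularity of $I_\theta$ and the sub/super–solution bounds are standard). The delicate part is Step 2: making the second–order expansion at the degenerate parameter $\theta_-$ rigorous via Lyapunov–Schmidt/Crandall–Rabinowitz and carrying the bookkeeping that isolates the precise $d_2^*$ and the exact role of \eqref{T2-2-eq1}; one must also check that $\Sigma$ is simple \emph{globally}, which is immediate here from the monotone $\theta$–parametrization but is the subtle point in any pure Rabinowitz global–bifurcation approach.
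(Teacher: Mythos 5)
Your proposal is correct and follows essentially the same route as the paper: eliminate $S$ via the conserved combination $d_SS+d_II$, identify the EE set with the one-parameter logistic family plus the scalar mass constraint, and read the bifurcation direction off the sign of the first-order coefficient of the mass (equivalently $\mathcal{R}_0$) map at the bifurcation point, which under \eqref{T2-2-eq1} is negative for small $d_S$ — exactly the paper's computation through $\mathcal{N}_{d_I,d_S}$, \eqref{C1-1}, \eqref{v-l-eq2} and \eqref{LK1}, with your Lyapunov–Schmidt expansion playing the role of the paper's limit $v^l\to c^*\varphi_1$ as $l\to l^*$. The only differences are cosmetic: you parametrize by $\theta=d_SS+d_II$ rather than $l=\theta/d_S$, and you extract an explicit admissible threshold $d_2^*=d_I\bigl(1-\overline{\beta\varphi_1^3}\big/(\overline{\varphi_1}\,\overline{\beta\varphi_1^2})\bigr)$ where the paper only takes $d_2^*$ small by continuity.
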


Proposition \ref{appen-prop3}-{\rm (ii)} of the Appendix gives an example of parameters satisfying \eqref{T2-2-eq1}.  Our next result also complements Theorem \ref{T0} by identifying sufficient conditions that lead to a forward S-shaped bifurcation curve of EE solutions at $\mathcal{R}_0=1$.

\begin{tm}[Forward and S-shaped bifurcation curve]\label{T2} Fix $d_I>0$ and suppose that 
\begin{equation}\label{T2-eq1}
\overline{(\gamma/\beta)}\mathcal{R}_1<1\quad \text{and}\quad  \overline{\beta\varphi_{1}^3}>(\overline{\varphi_{1}})(\overline{\beta\varphi_{1}^2}).
\end{equation}
Then there is ${d}_3^*={d}_3^*(d_I)>0$ such that for every $0<d_S<{d}_3^*$, as $\mathcal{R}_0$ increases from zero to infinity, the EE solutions of system \eqref{e1} form an unbounded simple connected curve which bifurcates from the right at $\mathcal{R}_0=1$. Furthermore, for every $0<d_S<{d}_3^*$, there exist $\mathcal{R}_{0,1}^{d_S}<1<\mathcal{R}_{0,2}^{d_S}\le \mathcal{R}_{0,3}^{d_S}$ such that:
\begin{itemize}
    \item[\rm (i)] if  $\mathcal{R}_0<\mathcal{R}_{0,1}^{d_S}$, system \eqref{e1} has no EE solution;
    \item[\rm (ii)] if $\mathcal{R}_0=\mathcal{R}_{0,1}^{d_S}$, system \eqref{e1} has at least one EE solution;
     \item[\rm (iii)] if  either $\mathcal{R}_{0,1}^{d_S}<\mathcal{R}_0\le1$ or $\mathcal{R}_0=\mathcal{R}_{0,2}^{d_S}$, system \eqref{e1} has at least two EE solutions;
    \item[\rm (iv)]   if $1<\mathcal{R}_0<\mathcal{R}_{0,2}^{d_S}$, system \eqref{e1} has at least three EE solutions.
    \item[\rm (v)] if $\mathcal{R}_0>\mathcal{R}_{0,2}^{d_S}$, system \eqref{e1} has at least one EE solution,  which is unique if $\mathcal{R}_0>\mathcal{R}_{0,3}^{d_S}$.
\end{itemize}
Moreover,  $\mathcal{R}^{d_S}_{0,i}$ is strictly increasing in $d_S$ for each $i=1,2,3$; and as $d_S\to 0$,  $\mathcal{R}_{0,1}^{d_S}\to \mathcal{R}_0^{\rm low}$ and  $\mathcal{R}_{0,i}^{d_S}\to \mathcal{R}_{0,i}^*$, $i=2,3$, for some positive numbers $1<\mathcal{R}^{*}_{0,2}\le \mathcal{R}^*_{0,3}$.
\end{tm}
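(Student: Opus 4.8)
The plan is to reduce the full EE system to a single scalar equation and then study its bifurcation curve as $d_S\to0$. First I would recall the standard reduction: writing $I=I$ and $S=\frac{N}{|\Omega|}+\frac{d_I}{d_S}\cdot(\text{something})$ is not quite right, so instead I would use the change of variables that makes the EE problem equivalent to finding positive solutions of a scalar elliptic equation for $I$ with a constraint. More precisely, from $d_S\Delta S = \beta SI-\gamma I = -d_I\Delta I$, one gets $d_S S + d_I I = c$ for a constant $c$ (after integrating, $c$ is determined by $N$), so $S=\frac{c-d_I I}{d_S}$; plugging into the $I$-equation yields $d_I\Delta I + \frac{\beta I(c-d_I I)}{d_S}-\gamma I=0$ with $\int_\Omega(S+I)=N$, i.e. $\frac{c|\Omega|}{d_S}+(1-\tfrac{d_I}{d_S})\int_\Omega I=N$. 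This is the framework already developed in Section~\ref{Preliminaries} (I would cite the relevant lemmas there), and I would phrase everything in terms of the rescaled unknown so that the singular limit $d_S\to0$ becomes tractable.

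The core of the argument is a global bifurcation analysis in the spirit of Rabinowitz, with $\mathcal{R}_0$ (equivalently $N$) as the bifurcation parameter. I would first show the EE set is a one-dimensional $C^1$ manifold except possibly at degenerate points, using the implicit function theorem on the scalar equation; the linearization is a Schrödinger-type operator whose principal eigenvalue controls nondegeneracy. Bifurcation from the DFE occurs exactly at $\mathcal{R}_0=1$ by Crandall--Rabinowitz, and the \emph{direction} of bifurcation is governed by the sign of a cubic integral quantity: a Lyapunov--Schmidt reduction near $\mathcal{R}_0=1$ produces, to leading order, an equation of the form $a(\mathcal{R}_0-1)+b\,s + o(\cdot)=0$ where $s$ is the amplitude along $\varphi_1$ and $b$ has the same sign as $(\overline{\varphi_1})(\overline{\beta\varphi_1^2})-\overline{\beta\varphi_1^3}$ (up to positive factors and lower-order-in-$d_S$ corrections). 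Hypothesis~\eqref{T2-eq1} forces $b<0$, giving forward bifurcation; hypothesis~\eqref{T2-2-eq1} gives backward. This identifies $\mathcal{R}_{0,1}^{d_S}<1$: the curve, having bifurcated to the right, must turn back, and the leftmost point of the curve is $\mathcal{R}_{0,1}^{d_S}$; Theorem~\ref{T0}(iii) already supplies the minimal EE $(S_{\rm low},I_{\rm low})$ for $\mathcal{R}_0^{\rm low}<\mathcal{R}_0<1$, so the turning point sits at some $\mathcal{R}_{0,1}^{d_S}\le\mathcal{R}_0^{\rm low}$, and I would show equality in the limit $d_S\to0$ by matching with the asymptotic characterization of $\mathcal{R}_0^{\rm low}$ from Theorem~\ref{T0}.

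For the \emph{S-shape} I must locate a second fold to the right of $\mathcal{R}_0=1$. Here I would exploit the $d_S\to0$ singular limit directly: as $d_S\to0$, rescaled EE solutions concentrate/converge to solutions of a limiting problem (this is exactly the content of Theorem~\ref{T4}, which I may assume), and on that limiting profile one can exhibit, for a suitable range of $N>N_{\rm critic,2}$, at least three distinct solution branches — the maximal EE $(S_{\rm high},I_{\rm high})$ from Theorem~\ref{T0}(ii), the small EE continuing from the backward-then-forward part, and an intermediate one forced by the topology of a connected unbounded curve that enters $\mathcal{R}_0<1$ and must exit to $\mathcal{R}_0=\infty$ (since by Theorem~\ref{T1} uniqueness holds for large $\mathcal{R}_0$, giving $\mathcal{R}_{0,3}^{d_S}$ beyond which the branch is single-valued). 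Counting intersections of this curve with vertical lines $\{\mathcal{R}_0=\text{const}\}$ then yields (i)--(v). Monotonicity of $\mathcal{R}_{0,i}^{d_S}$ in $d_S$ and convergence to limiting values $\mathcal{R}_{0,i}^*$ I would get from monotonicity of the scalar equation's coefficients in $d_S$ together with compactness of the solution set on compact $\mathcal{R}_0$-ranges.

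The main obstacle, I expect, is making the fold structure rigorous rather than heuristic: the Lyapunov--Schmidt computation near $\mathcal{R}_0=1$ is delicate because the relevant coefficients depend on $d_S$ and one needs them to retain their sign \emph{uniformly} for $d_S$ small, which requires careful control of the $d_S\to0$ expansion of the eigenfunction $\varphi_1$-analogue and the second-order term; and establishing that the second fold $\mathcal{R}_{0,2}^{d_S}$ actually exists (not just that the curve is non-monotone) forces a quantitative comparison between the local bifurcation picture at $\mathcal{R}_0=1$ and the global picture supplied by Theorems~\ref{T0} and~\ref{T1}. In short, gluing the local (Crandall--Rabinowitz) and global (Rabinowitz + a priori bounds + $d_S\to0$ limit) analyses into a single connected curve with exactly the claimed turning points is the crux.
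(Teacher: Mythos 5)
Your opening reduction (constancy of $d_SS+d_II$, substitution into the $I$-equation, the total-mass constraint) is exactly the paper's Lemma \ref{lem3}, but you stop short of exploiting what it actually gives: every EE at level $\mathcal{R}_0$ corresponds to an $l=\kappa/d_S>l^*$ with $\mathcal{N}_{d_I,d_S}(l)=\mathcal{R}_0$ and $(S,I)=\big(l(1-d_Iu^l),d_Slu^l\big)$, so the whole EE set is \emph{globally} parametrized by the single scalar function $\mathcal{N}_{d_I,d_S}$ of \eqref{N-equation}. Once this is in hand, no Crandall--Rabinowitz, Lyapunov--Schmidt or Rabinowitz-type continuum argument is needed: the theorem reduces to one-variable calculus and the intermediate value theorem for $\mathcal{N}_{d_I,d_S}$ on $[l^*,\infty)$. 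The genuine gaps are in how you produce the global S-shape. (a) You never use the first hypothesis in \eqref{T2-eq1}; it is precisely what forces the second fold. The paper uses $\lim_{l\to\infty}\mathcal{N}_{d_I}(l)=\overline{(\gamma/\beta)}\mathcal{R}_1<1=\mathcal{N}_{d_I}(l^*)$ to pick a finite $\tilde{l}_0^*$ with $\sup_{l\ge \tilde{l}_0^*}\mathcal{N}_{d_I}(l)<1$ and then takes $d_S$ small enough that $\mathcal{N}_{d_I,d_S}(\tilde{l}_0^*)<1$; combined with $\frac{d}{dl}\mathcal{N}_{d_I,d_S}(l^*)>0$ (where the cubic-integral condition enters through \eqref{v-l-eq2}) and $\mathcal{N}_{d_I,d_S}(l)\to\infty$, this forces the branch to rise above $1$, dip below $1$, and rise again. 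Your ``the curve must turn back'' has no justification without this: a forward-bifurcating branch can perfectly well be globally monotone. (b) Invoking Theorem \ref{T4} here is circular for the regime $\mathcal{R}_0>1$: its part (ii) is proved \emph{using} Theorem \ref{T2}, and its part (i) only concerns $\mathcal{R}_0<1$. (c) Your inequality $\mathcal{R}_{0,1}^{d_S}\le\mathcal{R}_0^{\rm low}$ is backwards and would contradict Theorem \ref{T0}-(i); since $\mathcal{N}_{d_I,d_S}>\mathcal{N}_{d_I}$ for $d_S>0$, one has $\mathcal{R}_{0,1}^{d_S}>\mathcal{R}_0^{\rm low}$, decreasing to $\mathcal{R}_0^{\rm low}$ as $d_S\to0$.

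Two further points would block your argument even if (a)--(c) were repaired. The uniqueness claim in (v) cannot come from Theorem \ref{T1}: that theorem requires $d_S>d_{\rm low}(d_I)$ and is silent in the small-$d_S$ regime considered here (Remark \ref{RK0_1} shows multiplicity does occur there); in the paper, uniqueness for $\mathcal{R}_0>\mathcal{R}_{0,3}^{d_S}$ follows because $\mathcal{N}_{d_I,d_S}$ is analytic in $l$ with only finitely many changes of monotonicity (Lemma \ref{lem4}), is strictly increasing on its last interval, and has all critical values bounded by $\mathcal{R}_{0,3}^{d_S}$. Relatedly, your IFT-plus-nondegeneracy framework does not rule out infinitely many folds, so it yields neither a well-defined $\mathcal{R}_{0,3}^{d_S}$ (a maximum over critical values) nor the solution counts in (iii)--(iv); analyticity of $l\mapsto u^l$ is the missing ingredient. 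Finally, the ``uniform in $d_S$'' difficulty you flag at $\mathcal{R}_0=1$ disappears in the explicit parametrization: by \eqref{C1-1} and \eqref{v-l-eq2}, $\frac{d}{dl}\mathcal{N}_{d_I,d_S}(l^*)$ differs from $\frac{d}{dl}\mathcal{N}_{d_I}(l^*)>0$ by a term of order $d_S$, so one simply shrinks $d_3^*$.
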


\begin{figure}[htb]
    \centering
    \includegraphics[scale=0.25]{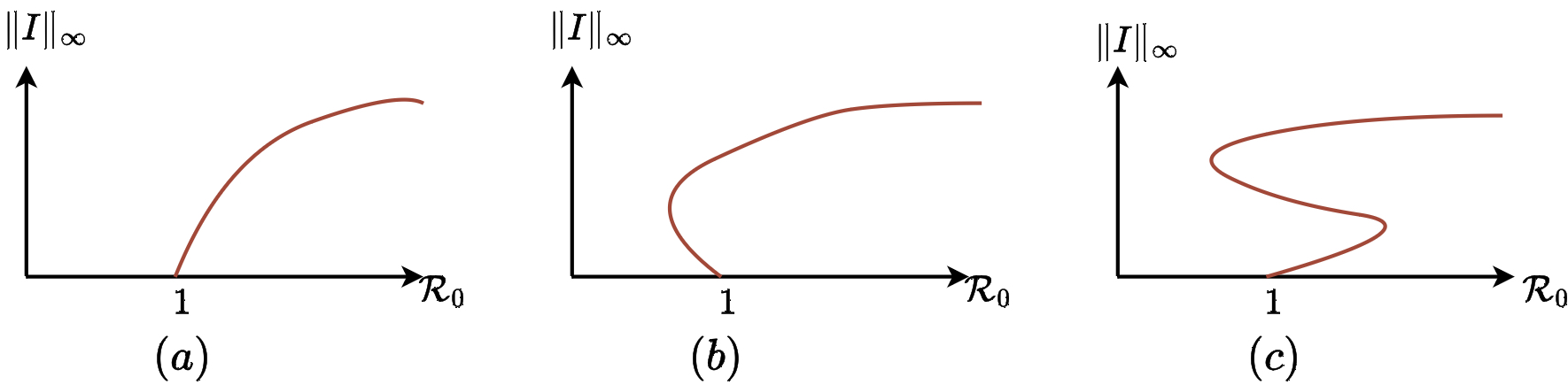}
    \caption{Schematic bifurcation curves of EE solutions of system \eqref{e1}. Figure \ref{fig1_1}-$(a)$ corresponds to large values of $d_S$ as indicated by Theorem \ref{T1}. Figure \ref{fig1_1}-$(b)$ corresponds to small values of $d_S$ and a backward bifurcation curve of EE as indicated by Theorem \ref{T2-2}. Figure \ref{fig1_1}-$(c)$ corresponds to small values of $d_S$ and a forward and S-shape bifurcation curve of EE as indicated by Theorem \ref{T2}. }
    \label{fig1_1}
\end{figure}

Proposition \ref{appen-prop3}-{\rm (i)} of the Appendix gives an example of parameters for which hypothesis \eqref{T2-eq1} holds.

\subsection{Asymptotic profiles of the EE solution for small $d_S$.}
Next, we explore how the multiplicity of the EE solutions of \eqref{e1} may affect disease control strategy. To this end,   we complement Theorems \ref{T0}, \ref{T2-2} and \ref{T2} with a result on the asymptotic profiles for the EE  as $d_S\to 0$. Our result indicate that the disease may either persist or die out depending on how $d_S$ is lowered. Precisely, we have the following result. 

\begin{tm}\label{T4} Fix $d_I>0$ and suppose that $\mathcal{R}_0^{\rm low}<1$, where $\mathcal{R}_0^{\rm low}$ is given by Theorem \ref{T0}. \begin{itemize}
\item[\rm (i)]Fix $\mathcal{R}_0^{\rm low}<\mathcal{R}_0<1$ and let $d_1^*$ be as in Theorem \ref{T0}-{\rm (ii),(iii)} such that system \eqref{e1} has a maximal EE solution $(S_{\rm high},I_{\rm high})$ and a minimal EE solution $(S_{\rm low},I_{\rm low})$ for every $0<d_S<d_1^*$. 
   
    \begin{itemize}
        \item[\rm (i-1)] If  $\mathcal{R}_0<\overline{({\gamma}/{\beta})}\mathcal{R}_1$, then there is a positive constant $C>0$ such that \begin{equation}\label{T0-eq4}
        \frac{d_{S}}{C}\le I_{\rm low}< I_{\rm high}\le Cd_{S}\quad  \forall\ 0<d_S<\frac{d_1^*}{2}\quad \text{and}
    \end{equation}
    \begin{equation}\label{T0-eq5}
        S_{\rm high}\to S_{\rm high}^*:=\frac{N(1-d_Iu^*_{\rm high})}{\int_{\Omega}(1-d_Iu^*_{\rm high})} \quad \text{and}\quad  S_{\rm low}\to S_{\rm low}^*:=\frac{N(1-d_Iu^*_{\rm low})}{\int_{\Omega}(1-d_Iu^*_{\rm low})}\quad \text{as}\ d_S\to 0
    \end{equation}
    in $C^{1}(\overline{\Omega})$ where $ 0<u^*_{\rm low}<u^*_{\rm high}<\frac{1}{d_I}$ are  classical solutions of the nonlocal elliptic equation
    \begin{equation}\label{T0-eq6}
    \begin{cases}
    0=d_I\Delta u^*+\big( (\mathcal{R}_0/\mathcal{R}_1)\beta\frac{(1-d_Iu^*)}{\overline{(1-d_Iu^*)}}-\gamma\big)u^* & x\in\Omega,\cr
    0=\partial_{n}u^* & x\in\partial\Omega.
    \end{cases}
    \end{equation}

        \item[\rm (i-2)] If  $\mathcal{R}_0>\overline{({\gamma}/{\beta})}\mathcal{R}_1$, then 
        \begin{equation}\label{T0-eq7}
        \lim_{d_S\to0^+}\left[\big\|S_{\rm high}-{\gamma}/{\beta}\big\|_{\infty}+\big\|I_{\rm high}-\big({\mathcal{R}_0}/{\mathcal{R}_1}-\overline{({\gamma}/{\beta})}\big)\big\|_{\infty}\right] = 0
    \end{equation}
    and $(S_{\rm low},I_{\rm low})$ satisfies \eqref{T0-eq4} and \eqref{T0-eq5}.
    \end{itemize} 
    \item[\rm(ii)] In addition, suppose that \eqref{T2-eq1} holds. Let $d_3^*$ and $\mathcal{R}_{0,2}^*$ be given by Theorem \ref{T2} and  fix $1<\mathcal{R}_0<\mathcal{R}^*_{0,2}$.  Then for every $0<d_S<d_3^*$, system \eqref{e1} has a maximal EE solution $(S_{\rm high},I_{\rm high})$ and a minimal EE solution $(S_{\rm low},I_{\rm low})$. Moreover, as $d_S\to 0$, $I_{\rm low}$ satisfies \eqref{T0-eq4}, $S_{\rm low}$ satisfies \eqref{T0-eq5} up to a subsequence, and  $(S_{\rm high},I_{\rm high})$ satisfies \eqref{T0-eq7}. 
    \end{itemize}
\end{tm}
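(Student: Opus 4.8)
I would turn the whole problem into a scalar one. Adding the two stationary equations and using the Neumann conditions shows that $\kappa:=d_SS+d_II$ is a positive constant on $\overline{\Omega}$ (equal to $d_S\overline{S}+d_I\overline{I}$), so $S=(\kappa-d_II)/d_S$; setting $\sigma:=\kappa/d_S$ and $u:=I/\kappa$, the infected density solves the $d_S$-independent logistic problem $0=d_I\Delta u+\big(\beta\sigma(1-d_Iu)-\gamma\big)u$ in $\Omega$ with $\partial_{\vec{n}}u=0$ on $\partial\Omega$, and the mass constraint becomes $\sigma\big(1-(d_I-d_S)\overline{u}\big)=N/|\Omega|$. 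A positive solution forces $\sigma>1/\mathcal{R}_1$ (by \eqref{R-star-eq}), and for each $\sigma>1/\mathcal{R}_1$ the logistic problem has a unique positive solution $u_\sigma$, with $0<u_\sigma<1/d_I$, which is continuous and strictly increasing in $\sigma$, tends to $0$ as $\sigma\downarrow1/\mathcal{R}_1$, and satisfies $d_Iu_\sigma\to1$ and $\sigma(1-d_Iu_\sigma)\to\gamma/\beta$ uniformly as $\sigma\to\infty$ (the logistic asymptotics underlying the profile results of \cite{Wu_Zou2016,Wen2018,CastellanoSalako2021}). Since $I=\sigma d_Su_\sigma$ is increasing in $\sigma$, the EE solutions are in bijection with the roots $\sigma>1/\mathcal{R}_1$ of
\begin{equation*}
G_{d_S}(\sigma):=\sigma\big(1-(d_I-d_S)\overline{u_\sigma}\big)-N/|\Omega|,
\end{equation*}
the minimal (resp.\ maximal) EE being the smallest (resp.\ largest) root; $G_{d_S}(+\infty)=+\infty$ for each fixed $d_S>0$, while $G_{d_S}\to G_0$ locally uniformly on $(1/\mathcal{R}_1,\infty)$ as $d_S\to0$, where $G_0(\sigma):=\sigma(1-d_I\overline{u_\sigma})-N/|\Omega|$, $G_0(1/\mathcal{R}_1^+)=(1-\mathcal{R}_0)/\mathcal{R}_1$, $G_0(+\infty)=\overline{\gamma/\beta}-\mathcal{R}_0/\mathcal{R}_1$ (recall $\mathcal{R}_0/\mathcal{R}_1=N/|\Omega|$), and $\sigma$ is a root of $G_0$ exactly when $u_\sigma$ solves \eqref{T0-eq6}. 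Finally, $\overline{S},\overline{I}\le N/|\Omega|$ and the maximum principle keep $S$ between positive constants and $I$ bounded, uniformly in $d_S$.

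\textbf{The two limiting profiles.} For a family of EE with corresponding roots $\sigma_{d_S}$, exactly two scenarios occur as $d_S\to0$. \emph{(a) Collapse:} if $\sigma_{d_S}$ is bounded, then along subsequences $\sigma_{d_S}\to\sigma^*>1/\mathcal{R}_1$, a root of $G_0$; hence $u^*:=u_{\sigma^*}$ solves \eqref{T0-eq6}, $d_S/C\le I=\sigma_{d_S}d_S\,u_{\sigma_{d_S}}\le Cd_S$, and $S=\sigma_{d_S}(1-d_Iu_{\sigma_{d_S}})\to\sigma^*(1-d_Iu^*)=\frac{N(1-d_Iu^*)}{\int_\Omega(1-d_Iu^*)}$ in $C^1(\overline{\Omega})$, which is \eqref{T0-eq5}. \emph{(b) Blow-up:} if $\sigma_{d_S}\to\infty$, then $d_Iu_{\sigma_{d_S}}\to1$ and $S=\sigma_{d_S}(1-d_Iu_{\sigma_{d_S}})\to\gamma/\beta$ uniformly, and inserting $\overline{u_{\sigma_{d_S}}}\to1/d_I$ into $G_{d_S}(\sigma_{d_S})=0$ gives $d_S\sigma_{d_S}\to d_I(\mathcal{R}_0/\mathcal{R}_1-\overline{\gamma/\beta})$, whence $I=\sigma_{d_S}d_S\,u_{\sigma_{d_S}}\to\mathcal{R}_0/\mathcal{R}_1-\overline{\gamma/\beta}$ uniformly -- this is \eqref{T0-eq7}, and it forces $\mathcal{R}_0\ge\overline{\gamma/\beta}\mathcal{R}_1$. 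When $G_0(+\infty)>0$ (i.e.\ $\mathcal{R}_0<\overline{\gamma/\beta}\mathcal{R}_1$), all roots of $G_{d_S}$ stay bounded, since $G_{d_S}(\sigma)=G_0(\sigma)+\sigma d_S\overline{u_\sigma}\ge G_0(\sigma)$ is positive for large $\sigma$; when $G_0(+\infty)<0$, the largest root of $G_{d_S}$ escapes to $+\infty$ at rate $d_S^{-1}$ while the smallest remains bounded.

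\textbf{Part (i).} Here $\mathcal{R}_0<1$. If $\mathcal{R}_0<\overline{\gamma/\beta}\mathcal{R}_1$ (case (i-1)): $G_0$ is positive at both ends of $(1/\mathcal{R}_1,\infty)$ and (using $\mathcal{R}_0>\mathcal{R}_0^{\rm low}$ and the structure of $G_0$ from the proof of Theorem \ref{T0}) has a smallest and a largest root $1/\mathcal{R}_1<\sigma^*_{\rm low}\le\sigma^*_{\rm high}$, to which the smallest and largest roots of $G_{d_S}$ converge; thus both $I_{\rm low}$ and $I_{\rm high}$ are of type (a), $I_{\rm low}/\kappa\to u^*_{\rm low}:=u_{\sigma^*_{\rm low}}$ and $I_{\rm high}/\kappa\to u^*_{\rm high}:=u_{\sigma^*_{\rm high}}$ in $C^1(\overline{\Omega})$ with $0<u^*_{\rm low}<u^*_{\rm high}<1/d_I$ (monotonicity of $\sigma\mapsto u_\sigma$), both solving \eqref{T0-eq6}, and \eqref{T0-eq4}--\eqref{T0-eq5} follow. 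If $\overline{\gamma/\beta}\mathcal{R}_1<\mathcal{R}_0<1$ (case (i-2)): $G_0(+\infty)<0$, so $I_{\rm high}$ (largest root, which escapes) is of type (b) and satisfies \eqref{T0-eq7}, while $I_{\rm low}$ (smallest root, which stays bounded) is of type (a), converges to the profile determined by the smallest root $\sigma^*_{\rm low}$ of $G_0$, and satisfies \eqref{T0-eq4}--\eqref{T0-eq5}.

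\textbf{Part (ii).} Under \eqref{T2-eq1} one has $\overline{\gamma/\beta}\mathcal{R}_1<1<\mathcal{R}_0$, so $G_0$ is negative at both ends; the maximal EE is supplied by Theorem \ref{T0}(ii) (shrinking $d_3^*$ so $d_3^*\le d_1^*(\mathcal{R}_0,d_I)$) and the minimal one by the construction in Theorem \ref{T2}. Since $G_0(+\infty)<0$, the largest root of $G_{d_S}$ escapes, so $(S_{\rm high},I_{\rm high})$ is of type (b) and satisfies \eqref{T0-eq7}; the smallest root of $G_{d_S}$ stays bounded, so $I_{\rm low}=\sigma_{\rm low}d_S\,u_{\sigma_{\rm low}}\asymp d_S$ satisfies \eqref{T0-eq4}, and $u_{\rm low}=I_{\rm low}/\kappa$ being precompact in $C^1(\overline{\Omega})$ with every sublimit solving \eqref{T0-eq6}, $S_{\rm low}$ converges along a subsequence to $\frac{N(1-d_Iu^*)}{\int_\Omega(1-d_Iu^*)}$ for some solution $u^*$ of \eqref{T0-eq6}; the subsequence is genuinely needed since the behaviour of the smallest root of $G_0$ is more delicate when $\mathcal{R}_0>1$. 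The two most delicate ingredients are the logistic asymptotics $\sigma(1-d_Iu_\sigma)\to\gamma/\beta$ as $\sigma\to\infty$ (a genuine singular-perturbation fact, to be imported from \cite{Wu_Zou2016,Wen2018,CastellanoSalako2021}) and the escape rate $d_S\sigma_{\rm high}(d_S)=O(1)$ of the largest root of $G_{d_S}$.
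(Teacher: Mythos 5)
Your proposal is correct and follows essentially the same route as the paper: your function $G_{d_S}$ is exactly $l^*\big(\mathcal{N}_{d_I,d_S}(\cdot)-\mathcal{R}_0\big)$ from Lemma \ref{lem3}, and the bounded-root versus escaping-root dichotomy, the logistic asymptotics $l(1-d_Iu^l)\to\gamma/\beta$, and the mass-constraint computation giving $d_S\sigma_{\rm high}\to d_I\big(\mathcal{R}_0/\mathcal{R}_1-\overline{(\gamma/\beta)}\big)$ are precisely the paper's arguments. The only presentational difference is that the paper secures the full (non-subsequential) limits in part (i) by proving monotonicity of $l_{\rm low}(d_S)$ and $l_{\rm high}(d_S)$ in $d_S$, which in your setup follows at once from $G_{d_S}\ge G_0$ being increasing in $d_S$ (your identification of the limits as the extreme roots of $G_0$ is a harmless over-claim; all that is needed is that they are roots, separated by a point where $G_0<0$).
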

Theorem \ref{T4}-{\rm (i-2)} $\&$ {\rm (ii)} show that as $d_S\to 0$, the profiles of the EE solutions of \eqref{e1} depend on the chosen subsequence. These results also show that the two scenarios discussed in \cite[Theorem 2.3 ]{Wu_Zou2016} are possible. Theorem \ref{T4} also complements \cite[Theorem 2.5]{CastellanoSalako2021} which establishes the asymptotic profiles of EE of \eqref{e1} as $d_I\to 0$.

\subsection*{Conclusion.} We examined the questions of multiplicity or uniqueness of   the EE solutions of a diffusive epidemic model with the mass-action transmission and obtained some interesting  results. In particular, our results revealed some new phenomona, which cannot be observed from  neither the ODE-model \eqref{model-001} nor from the corresponding PDE-model with the frequency-dependent transmission \eqref{e1-prime}. 

\quad As mentioned above,  for the ODE-SIS model \eqref{model-001} with simple nonlinearity (i.e.,   frequency-dependent, mass-action transmission), the BRN  is enough to completely characterize the existence  of EE solutions. 
This is  also the case  for the diffusive epidemic model \eqref{e1-prime} with  the frequency-dependent transmission.     However, for the diffusive model \eqref{e1} with the mass-action transmission, we showed that the BRN is not enough to predict the persistence of the disease. 

\quad Indeed,  Theorem \ref{T0} indicates that, for the dynamics of solutions of  model \eqref{e1}, the disease may still persist even if  $\mathcal{R}_0<1$. Precisely, there is some critical number  $0<\mathcal{R}_0^{\rm low}\le 1$, uniquely determined by $d_I$, such that \eqref{e1} has no EE if $\mathcal{R}_0\le \mathcal{R}_0^{\rm low}$ for any diffusion rate $d_S$ of the susceptible host. However, thanks to Theorem \ref{T0}-{\rm (iii)}, if $\mathcal{R}_0^{\rm low}<\mathcal{R}_0<1$, then there exist at least two EE solutions for sufficiently small values of $d_S$. In this this case, we see that $\mathcal{R}_0$ is not enough to predict the persistence of the disease. In Proposition \ref{prop0_1} we showed  that if the average of the ratio of the recovery rate over the transmission rate is smaller than the ratio of the average of the recovery rate over the average of the transmission rate, then $\mathcal{R}_0^{\rm low}<1$ for large values of $d_I$.

\quad Second, by  Theorem \ref{T2-2}, if  the  average of $\beta\varphi_{1}^3$ is smaller than the product of the  averages of $\varphi_{1}$ and $\beta\varphi_{1}^2$, again $\mathcal{R}_0^{\rm low}<1$. In fact, in this case, there is a backward bifurcation at $\mathcal{R}_0=1$. See Figure \ref{fig1_1}-{(b)} for a schematic diagram of the backward bifurcation curve of EE solutions.

\quad Third, when $\mathcal{R}_0>1$, Theorem \ref{T2} shows that system \eqref{e1} may have at least three EE solutions for small $d_S$. See Figure \ref{fig1_1}-{(c)} for a schematic diagram of the S-shape bifurcation curve of EE solutions of \eqref{e1} at $\mathcal{R}_0=1$. Despite all the above possible interesting scenarios that could exhibit the global structure of the EE solutions of system \eqref{e1}, Theorem \ref{T1} shows that $\mathcal{R}_0$ is enough to predict the persistence of the disease if  the susceptible population moves slightly slower or faster than the infected population. See Figure \ref{fig1_1}-{(a)} for a schematic picture of  bifurcation curve of the EE solutions of \eqref{e1} if $d_S>d_{\rm low}(d_I)$, where $d_{\rm low}(d_I)<d_I$ is given by Theorem \ref{T1}. 

\quad 
Thanks to the above results and Theorem \ref{T4}, we can conclude that a decrease in movement rate of susceptible individuals could be an effective disease control strategy if accumulation of population  is also minimized. 

%

\section{Preliminary Results}\label{Preliminaries}
\quad We present a few preliminary results here.  We start with the following result from \cite{Allen2008}.

\begin{lem}\label{lem1} 
{\rm (i)} If $\frac{\beta}{\gamma}$ is constant, then $\mathcal{R}_1=\frac{\beta}{\gamma}$ for all $d_I>0$. 
{\rm (ii)} If  $\frac{\beta}{\gamma}$ is not constant, then $\mathcal{R}_1$ is strictly decreasing in $d_I$,
    \begin{equation}\label{R-star-limit-eq}
        \lim_{d_I\to 0^+}\mathcal{R}_1=\max_{x\in\overline{\Omega}}({\beta}/{\gamma)(x)}\quad \text{and}\quad \lim_{d_I\to\infty}\mathcal{R}_1=\overline{\beta}/\overline{\gamma}.
    \end{equation} 
    In particular, if \eqref{lem2-eq3} holds, then
      $   1<\overline{({\gamma}/{\beta})}\mathcal{R}_1$  {if} $ 0<d_I<\mathcal{R}_1^{-1}\big(1/\overline{({\gamma}/{\beta})}\big)$; $ 
        1=\overline{({\gamma}/{\beta})}\mathcal{R}_1$ if $  d_I=\mathcal{R}^{-1}_1\big(1/\overline{({\gamma}/{\beta})}\big)$; and $ 
        1>\overline{({\gamma}/{\beta})}\mathcal{R}_1$  if $d_I>\mathcal{R}_1^{-1}\big(1/\overline{({\gamma}/{\beta})}\big)$ .
    
\end{lem}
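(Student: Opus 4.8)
The plan is to read everything off the Rayleigh quotient \eqref{R-star-eq} and the Euler--Lagrange equation \eqref{R-star-pde} for its unique positive $L^2$-normalized maximizer, written $\varphi_1=\varphi_1^{d_I}$ to display the dependence on $d_I$. Multiplying \eqref{R-star-pde} by $\varphi_1^{d_I}$ and integrating by parts gives the identity
\begin{equation}\label{plan-id}
\mathcal{R}_1(d_I)=\frac{\int_{\Omega}\beta(\varphi_1^{d_I})^2}{d_I\int_{\Omega}|\nabla\varphi_1^{d_I}|^2+\int_{\Omega}\gamma(\varphi_1^{d_I})^2},
\end{equation}
which I will use repeatedly. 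Part (i) is immediate: if $\beta=c\gamma$ with $c$ constant, then $\int_{\Omega}\beta\varphi^2=c\int_{\Omega}\gamma\varphi^2\le c\int_{\Omega}[d_I|\nabla\varphi|^2+\gamma\varphi^2]$ for every admissible $\varphi$, so $\mathcal{R}_1\le c$, and the test function $\varphi\equiv1$ gives $\mathcal{R}_1\ge c$; hence $\mathcal{R}_1\equiv c=\beta/\gamma$.

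\emph{Strict monotonicity and continuity.} For fixed $\varphi$ the quotient in \eqref{R-star-eq} is non-increasing in $d_I$, so $\mathcal{R}_1$ is non-increasing; it is moreover locally Lipschitz, since on any interval $[a,\infty)$ each such quotient is $\tfrac{\beta_{\max}}{a\,\gamma_{\min}}$-Lipschitz uniformly in $\varphi$, hence so is their supremum. For strictness, take $0<d_I<d_I'$ and test \eqref{R-star-eq} at $d_I$ against $\psi:=\varphi_1^{d_I'}$:
\begin{equation*}
\mathcal{R}_1(d_I)\ \ge\ \frac{\int_{\Omega}\beta\psi^2}{\int_{\Omega}[d_I|\nabla\psi|^2+\gamma\psi^2]}\ \ge\ \frac{\int_{\Omega}\beta\psi^2}{\int_{\Omega}[d_I'|\nabla\psi|^2+\gamma\psi^2]}\ =\ \mathcal{R}_1(d_I').
\end{equation*}
Equality throughout would force $\int_{\Omega}|\nabla\psi|^2=0$, i.e.\ $\psi$ constant, and then \eqref{R-star-pde} gives $\beta/\gamma\equiv\mathcal{R}_1(d_I')$, contradicting assumption {\bf (A)}. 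So $\mathcal{R}_1$ is strictly decreasing.

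\emph{The two limits.} The test function $\varphi\equiv1$ gives $\mathcal{R}_1(d_I)\ge\overline{\beta}/\overline{\gamma}$ for every $d_I$. From \eqref{plan-id} and $\|\varphi_1^{d_I}\|_{L^2(\Omega)}=1$, $d_I\int_{\Omega}|\nabla\varphi_1^{d_I}|^2\le\beta_{\max}/\mathcal{R}_1(d_I)\le\beta_{\max}\overline{\gamma}/\overline{\beta}$, so $\int_{\Omega}|\nabla\varphi_1^{d_I}|^2\to0$ as $d_I\to\infty$; Poincaré's inequality then forces $\varphi_1^{d_I}\to|\Omega|^{-1/2}$ in $L^2(\Omega)$, whence $\int_{\Omega}\beta(\varphi_1^{d_I})^2\to\overline{\beta}$ and $\int_{\Omega}\gamma(\varphi_1^{d_I})^2\to\overline{\gamma}$, and dropping the nonnegative gradient term in \eqref{plan-id} gives $\limsup_{d_I\to\infty}\mathcal{R}_1\le\overline{\beta}/\overline{\gamma}$; hence $\lim_{d_I\to\infty}\mathcal{R}_1=\overline{\beta}/\overline{\gamma}$. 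For $d_I\to0^+$, set $m:=\max_{\overline{\Omega}}(\beta/\gamma)$; since $\int_{\Omega}\beta\varphi^2=\int_{\Omega}(\beta/\gamma)\gamma\varphi^2\le m\int_{\Omega}\gamma\varphi^2\le m\int_{\Omega}[d_I|\nabla\varphi|^2+\gamma\varphi^2]$, we get $\mathcal{R}_1(d_I)\le m$ for all $d_I$. For the reverse bound, fix $x_0\in\overline{\Omega}$ with $(\beta/\gamma)(x_0)=m$ and, for small $r>0$, a fixed nonzero $\varphi_r\in W^{1,2}(\Omega)$ supported in $B_r(x_0)\cap\Omega$; by continuity of $\beta/\gamma$, $\int_{\Omega}\beta\varphi_r^2\,/\int_{\Omega}\gamma\varphi_r^2\to m$ as $r\to0$, while $\int_{\Omega}|\nabla\varphi_r|^2$ does not depend on $d_I$, so inserting $\varphi_r$ into \eqref{R-star-eq} and letting $d_I\to0^+$, then $r\to0$, yields $\liminf_{d_I\to0^+}\mathcal{R}_1\ge m$. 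Hence $\lim_{d_I\to0^+}\mathcal{R}_1=m$; being continuous and strictly decreasing, $\mathcal{R}_1$ has range the open interval $(\overline{\beta}/\overline{\gamma},\,m)$ and admits a continuous strictly decreasing inverse $\mathcal{R}_1^{-1}$ there.

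\emph{The trichotomy, and the main obstacle.} Assume \eqref{lem2-eq3}; then $\gamma/\beta$ is non-constant, so {\bf (A)} holds and the above applies. Set $g(d_I):=\overline{(\gamma/\beta)}\,\mathcal{R}_1(d_I)$, continuous and strictly decreasing, with $g(0^+)=\overline{(\gamma/\beta)}\,m$ and $g(\infty)=\overline{(\gamma/\beta)}\,\overline{\beta}/\overline{\gamma}$. By Cauchy--Schwarz, $\overline{(\gamma/\beta)}\,\overline{(\beta/\gamma)}\ge1$, strictly since $\gamma/\beta$ is non-constant, and $m\ge\overline{(\beta/\gamma)}$, so $g(0^+)>1$; and $g(\infty)<1$ is exactly \eqref{lem2-eq3} rewritten. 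By Jensen's inequality $1/\overline{(\gamma/\beta)}\le\overline{(\beta/\gamma)}$, again strictly, so $1/\overline{(\gamma/\beta)}\in(\overline{\beta}/\overline{\gamma},\,m)$ and $d_I^*:=\mathcal{R}_1^{-1}\big(1/\overline{(\gamma/\beta)}\big)$ is well defined; by the intermediate value theorem and strict monotonicity, $g(d_I^*)=1$, $g>1$ on $(0,d_I^*)$ and $g<1$ on $(d_I^*,\infty)$, which is the claimed trichotomy. The only genuinely non-formal steps are the $d_I\to0^+$ limit, where one must concentrate test functions to recover $\max(\beta/\gamma)$ from the Rayleigh quotient, and the bookkeeping (Jensen, Cauchy--Schwarz, strict non-constancy) showing that $1/\overline{(\gamma/\beta)}$ lies \emph{strictly inside} the range of $\mathcal{R}_1$, so that $\mathcal{R}_1^{-1}$ can legitimately be evaluated there; the rest is routine manipulation of \eqref{R-star-eq}, \eqref{R-star-pde} and \eqref{plan-id}. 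Parts (i)--(ii) are due to Allen et al.\ \cite{Allen2008} and could instead simply be cited.
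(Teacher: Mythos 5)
Your proof is correct, and it is worth noting that the paper itself never proves Lemma \ref{lem1}: it simply imports parts (i)--(ii) from Allen et al.\ \cite{Allen2008} and treats the ``in particular'' trichotomy as an immediate consequence. What you supply is a complete, self-contained variational argument, and the extra pieces you add are exactly the ones the citation-only route leaves implicit: (a) continuity of $d_I\mapsto\mathcal{R}_1(d_I)$ (your uniform-in-$\varphi$ Lipschitz bound on the Rayleigh quotients), without which the intermediate value theorem and the very existence of $\mathcal{R}_1^{-1}$ are not available; (b) strictness of the monotonicity via testing the quotient at $d_I$ with the maximizer $\varphi_1^{d_I'}$ and ruling out the equality case through \eqref{R-star-pde} and assumption {\bf (A)}; (c) the concentration argument for $\lim_{d_I\to0^+}\mathcal{R}_1=\max(\beta/\gamma)$ and the gradient-decay/Poincar\'e argument for $\lim_{d_I\to\infty}\mathcal{R}_1=\overline{\beta}/\overline{\gamma}$; and (d) the Jensen/Cauchy--Schwarz bookkeeping showing that under \eqref{lem2-eq3} the value $1/\overline{(\gamma/\beta)}$ lies strictly inside the range $(\overline{\beta}/\overline{\gamma},\max(\beta/\gamma))$ of $\mathcal{R}_1$, so that $\mathcal{R}_1^{-1}\big(1/\overline{(\gamma/\beta)}\big)$ is legitimately defined and the trichotomy follows from strict monotonicity. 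All steps check out (the equality case in (b) does force $\varphi_1^{d_I'}$ constant on the connected domain $\Omega$, and the strict inequalities in (d) use non-constancy of $\gamma/\beta$, which \eqref{lem2-eq3} itself guarantees). So your argument proves slightly more than the paper states explicitly, at the cost of length; as you note, citing \cite{Allen2008} for (i)--(ii) and keeping only your item (d) would be the shortest faithful proof of the lemma as stated.
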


{\quad  For each $q\in[2,\infty)$, consider the linear  elliptic operator  on $L^{q}(\Omega)$ (resp. on $C(\overline{\Omega})$), defined by
$$
\mathcal{L}^*w=d_I\Delta w+(l^*\beta-\gamma)w \quad w\in {\rm Dom}_{q}, (\text{resp.}\ w\in {\rm Dom}_{\infty} )\quad , 
$$ 
where ${\rm Dom}_{q}=\{u\in W^{2,q}(\Omega) | \partial_{\vec{n}}u=0\ \text{on}\ \partial\Omega\}$ and  $
{\rm Dom}_{\infty}=\left\{u\in\cap_{q\ge 1}{\rm Dom}_{q}| \ \mathcal{L}^* u\in C(\overline\Omega)\right\}.$  By \cite[Theorem 1, page 357]{Evans}, since $\mathcal{L}^*$ is symmetric on $L^2(\Omega)$, then ${L}^2(\Omega)$ has an orthonormal basis formed by eigenfunctions of $\mathcal{L}^*$. Moreover,  since by \eqref{R-star-pde} equation $\varphi_{1}$ is an eigenvector of $\mathcal{L}^*$ associated with its principal eigenvalue zero, we can decompose $L^2(\Omega)$ as $L^2(\Omega)={\rm span}(\varphi_1)\oplus\{w\in L^2(\Omega)| \int_{\Omega}w\varphi_{1}=0\}$.  Thus,   
$$
C(\overline{\Omega})={\rm span}(\varphi_{1})\oplus\mathcal{Z}_{\infty}\quad \text{and}\quad L^q(\Omega)={\rm span}(\varphi_{1})\oplus \mathcal{Z}_q,\quad q\ge 2, 
$$
where $\mathcal{Z}_{\infty}=\{g\in C(\overline{\Omega})|\int_{\Omega}g\varphi_{1}=0 \}$ and $\mathcal{Z}_q=\{w\in L^q(\Omega)| \int_{\Omega}w\varphi_{1}=0\}$ for $q\ge 2$. Furthermore, we have that $\mathcal{L}^*_{|\mathcal{Z}_q\cap{\rm Dom}_{q}}\ :\ \mathcal{Z}_q\cap{\rm Dom}_{q} \to \mathcal{Z}_q$ is invertible for each $q\in[2,\infty]$.

\quad \quad Next, consider the one-parameter family of elliptic equations introduced in \cite{CastellanoSalako2021} :
\begin{equation}\label{Eq1}
    \begin{cases}
    0=d_I\Delta u +(l\beta(1-d_Iu)-\gamma)u & x\in\Omega,\cr 
    0=\partial_{\vec{n}}u & x\in\partial\Omega.
    \end{cases}
\end{equation}
Note that \eqref{Eq1} is a one-parameter family of the classical diffusive logistic elliptic equation. Hence, several interesting results have been established as with respect to the existence, uniqueness of stability of its positive solution whenever it exists.  Throughout the rest of the paper, we set $l^*:=\frac{1}{\mathcal{R}_1}$, so that $\mathcal{R}_0=N/(l^*|\Omega|)$. The next lemma collects some results on the positive solution of \eqref{Eq1}.

\begin{lem}\label{lem2} Fix $d_I>0$ and let $\mathcal{R}_1$ be given by \eqref{R-star-eq}.
\begin{itemize}
    \item[\rm (i)] The elliptic equation \eqref{Eq1} has a (unique) positive solution, $u^l$,  if and only if $l>l^*$. Moreover, 
    \begin{equation}\label{lem2-eq1}
        0<u^l<\frac{1}{d_I},\quad \lim_{l\to l^*}\|u^l\|_{\infty}=0,\quad \lim_{l\to\infty}\|u^l-\frac{1}{d_I}\|_{\infty}=0,\quad  \lim_{l\to\infty}\Big\|l(1-d_Iu^l)-\frac{\gamma}{\beta}\Big\|_{\infty}=0,
    \end{equation}
    {and \begin{equation}\label{rev-0}
      \lim_{l\to l^*}\Big\|\frac{u^l}{l-l^*}-\Big(\frac{\mathcal{R}_1\int_{\Omega}\beta\varphi_1^2}{d_I\int_{\Omega}\beta\varphi_1^3}\Big)\varphi_1\Big\|_{C^1(\overline{\Omega})}=0.
  \end{equation} }

    \item[\rm (ii)] The mapping $\big(l^*,\infty\big)\ni l\mapsto u^l\in C^1(\overline{\Omega})$ is smooth and strictly increasing. Setting $v^l=\partial_l u^l$ for every $l>l^*$, then $v^l$ satisfies
    \begin{equation}\label{v-l-eq1}
        \begin{cases}
        0=d_I\Delta v^l+(l\beta(1-2d_Iu^l)-\gamma)v^l+\beta(1-d_Iu^l)u^l & x\in\Omega,\cr 
        0=\partial_{\vec{n}}v^l & x\in\partial\Omega,
        \end{cases}
    \end{equation}
    \begin{equation}\label{v-l-eq2}
       \lim_{l\to l^*}\left\|v^l-\left(\frac{\mathcal{R}_1\int_{\Omega}\beta\varphi_1^2}{d_I\int_{\Omega}\beta\varphi_1^3}\right)\varphi_1\right\|_{C^1(\overline{\Omega})}=0,\quad \text{and}\quad \lim_{l\to\infty}\Big\|l^2v^l-\frac{\gamma}{d_I\beta}\Big\|_{\infty}=0.
    \end{equation}

    \item[\rm (iii)] The function $\mathcal{N}_{d_I}$ defined by 
    \begin{equation}\label{N-d_I-def}
        \mathcal{N}_{d_I}(l)
        =\frac{l}{l^*|\Omega|}\int_{\Omega}{(1-d_Iu^l)}=l\overline{(1-d_Iu^l)}\mathcal{R}_1\quad \forall\ l\ge l^*,
    \end{equation}
    is continuously differentiable. Furthermore, \begin{equation}\label{lem2-eq2}
        \mathcal{N}_{d_I}(l^*)
        =1\quad \text{and}\quad \lim_{l\to\infty}\mathcal{N}_{d_I}(l)
        ={\overline{(\gamma/\beta)}}\mathcal{R}_1.
    \end{equation} 
\end{itemize}

\end{lem}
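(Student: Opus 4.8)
The plan is to view \eqref{Eq1} as a one–parameter family of logistic elliptic equations $d_I\Delta u+(l\beta-\gamma)u-ld_I\beta u^2=0$ under the Neumann boundary condition, so that the existence/uniqueness statement and the crude bounds in part~(i) are classical (and essentially contained in \cite{CastellanoSalako2021}); I will recall these and then concentrate on the quantitative limits, which come from bifurcation analysis at the two ends $l=l^*$ and $l\to\infty$. For existence: a positive solution of \eqref{Eq1} exists, and is then unique, exactly when the principal (Neumann) eigenvalue of $d_I\Delta+(l\beta-\gamma)$ is positive; by \eqref{R-star-pde} that eigenvalue equals $0$ at $l=l^*=1/\mathcal{R}_1$ with eigenfunction $\varphi_{1}$, and it is strictly increasing in $l$ since $\beta>0$, so $u^l$ exists iff $l>l^*$, uniqueness being the usual consequence of the strict monotonicity of $u\mapsto(l\beta-\gamma)-ld_I\beta u$ (a sweeping argument). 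The bound $0<u^l<1/d_I$ follows from the maximum principle: at an interior maximum $x_0$ one has $l\beta(x_0)(1-d_Iu^l(x_0))-\gamma(x_0)\ge0$, which forces $u^l(x_0)<1/d_I$ because $\gamma>0$, while positivity is the strong maximum principle; elliptic regularity ($\beta,\gamma$ H\"older) makes $u^l$ a classical solution. Finally $\|u^l\|_\infty\to0$ as $l\downarrow l^*$, since any $C^2$ subsequential limit solves \eqref{Eq1} at $l=l^*$, is nonnegative, and therefore vanishes because there is no positive solution at $l^*$.

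For the sharp behaviour at $l=l^*$ I would run a Lyapunov--Schmidt reduction. Write $u^l=s\varphi_{1}+\zeta$ with $s=\int_\Omega u^l\varphi_{1}$ and $\zeta\in\mathcal{Z}_2$. Projecting \eqref{Eq1} onto $\mathrm{span}(\varphi_{1})$ and using $d_I\Delta\varphi_{1}=(\gamma-l^*\beta)\varphi_{1}$ collapses the equation to the scalar identity $(l-l^*)\int_\Omega\beta\varphi_{1}u^l=ld_I\int_\Omega\beta\varphi_{1}(u^l)^2$, while projecting onto $\mathcal{Z}_q$ and inverting $\mathcal{L}^*|_{\mathcal{Z}_q}$ gives $\|\zeta\|_{C^1}=O(\|u^l\|_\infty^2)$; hence $s\asymp\|u^l\|_\infty$, $\zeta$ is of second order, and $l-l^*=O(\|u^l\|_\infty)$. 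Substituting $u^l=s\varphi_{1}+o(s)$ into the scalar identity yields $l-l^*=l^*d_I\,s\,\bigl(\int_\Omega\beta\varphi_{1}^3\bigr)\bigl(\int_\Omega\beta\varphi_{1}^2\bigr)^{-1}(1+o(1))$, which gives the precise rate \eqref{rev-0}. For part~(ii): at any $l>l^*$ the linearization $w\mapsto d_I\Delta w+(l\beta(1-2d_Iu^l)-\gamma)w$ is invertible, because $u^l>0$ is the principal eigenfunction of $d_I\Delta+(l\beta(1-d_Iu^l)-\gamma)$ for eigenvalue $0$ and adding the strictly negative potential $-ld_I\beta u^l$ pushes the principal eigenvalue below $0$; the implicit function theorem then gives smoothness (indeed real-analyticity) of $l\mapsto u^l$, and differentiating \eqref{Eq1} in $l$ gives \eqref{v-l-eq1}. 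Since the same operator has negative principal eigenvalue and the forcing term $\beta(1-d_Iu^l)u^l$ is positive, the maximum principle yields $v^l>0$, i.e.\ strict monotonicity; and the first limit in \eqref{v-l-eq2} follows by reparametrising by $l$ the smooth Crandall--Rabinowitz bifurcation curve $s\mapsto(l(s),u(s))$, with $u'(0)=\varphi_{1}$ and $l'(0)=l^*d_I(\int_\Omega\beta\varphi_{1}^3)(\int_\Omega\beta\varphi_{1}^2)^{-1}\ne0$, so that $v^l=u'(s)/l'(s)\to u'(0)/l'(0)$.

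For $l\to\infty$: for $l$ large $\underline u_l:=\tfrac1{d_I}\bigl(1-\tfrac{\gamma_{\max}}{l\beta_{\min}}\bigr)$ is a subsolution and $1/d_I$ a supersolution of \eqref{Eq1}, giving $\underline u_l\le u^l\le 1/d_I$ and $\|u^l-\tfrac1{d_I}\|_\infty=O(1/l)$. Setting $w^l:=l(1-d_Iu^l)\in(0,\gamma_{\max}/\beta_{\min}]$ and substituting $u^l=\tfrac1{d_I}(1-w^l/l)$ turns \eqref{Eq1} into $d_I\Delta w^l=\beta(l-w^l)(w^l-\gamma/\beta)$, whose nonlinearity is increasing in $w^l$ on the relevant range once $l$ is large; a barrier/comparison argument (any smooth $\psi$ with $\gamma/\beta+\tfrac\varepsilon4\le\psi\le\gamma/\beta+\tfrac{3\varepsilon}4$ is a supersolution for $l$ large, since then the right side is $\gtrsim\beta_{\min}\tfrac\varepsilon4\,l$ while $d_I\Delta\psi$ stays bounded, and symmetrically from below) gives $\|w^l-\gamma/\beta\|_\infty\to0$, i.e.\ the last two limits in \eqref{lem2-eq1}. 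For the $l^2$-rescaling in \eqref{v-l-eq2}, put $W^l:=l^2v^l>0$; dividing \eqref{v-l-eq1} by $l$ gives $\tfrac{d_I}{l}\Delta W^l+\bigl(\beta(1-2d_Iu^l)-\tfrac\gamma l\bigr)W^l=-l\beta(1-d_Iu^l)u^l$, whose zeroth-order coefficient tends uniformly to $-\beta$ and whose right side tends uniformly to $-\gamma/d_I$; a maximum-principle bound shows $\|W^l\|_\infty$ stays bounded, and a singular-perturbation (barrier) argument then forces $W^l\to\gamma/(d_I\beta)$. Part~(iii) is now immediate: $\mathcal{N}_{d_I}$ is $C^1$ on $(l^*,\infty)$ because $l\mapsto\int_\Omega(1-d_Iu^l)$ is smooth by part~(ii), it extends continuously to $l^*$ with value $l^*\mathcal{R}_1=1$ since $u^l\to0$, and $\lim_{l\to\infty}\mathcal{N}_{d_I}(l)=\mathcal{R}_1\lim_{l\to\infty}\overline{l(1-d_Iu^l)}=\overline{(\gamma/\beta)}\mathcal{R}_1$ by the uniform convergence just established, which is \eqref{lem2-eq2}.

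The routine ingredients are existence, uniqueness, the a priori bounds, and the crude $l\to\infty$ profile; the two delicate points are the sharp first-order expansions \eqref{rev-0} and the first limit in \eqref{v-l-eq2}, which require the Lyapunov--Schmidt bookkeeping (in particular the second-order control of $\zeta$ via invertibility of $\mathcal{L}^*$ on $\mathcal{Z}_q$), and the exact $l^{-2}$ decay of $v^l$ as $l\to\infty$, which needs the nonstandard rescaling together with uniform elliptic/maximum-principle estimates along the whole family.
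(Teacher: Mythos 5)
Your proposal is correct, and for the hardest pieces it runs on the same engine as the paper's proof: the expansion \eqref{rev-0} comes from the identity $(l-l^*)\int_\Omega\beta\varphi_1u^l=ld_I\int_\Omega\beta\varphi_1(u^l)^2$ (obtained by cross-multiplying \eqref{R-star-pde} and \eqref{Eq1} and integrating) together with invertibility of $\mathcal{L}^*$ on $\mathcal{Z}_q$ to control the component orthogonal to $\varphi_1$ — the paper does exactly this, writing $u^l=(l-l^*)(c(l)\varphi_1+\tilde\psi^l)$, while you normalize by $s=\int_\Omega u^l\varphi_1$ instead of by $l-l^*$; and for the second limit in \eqref{v-l-eq2} you use the same rescaling $l^2v^l$ and singular-perturbation/barrier reasoning as the paper's \eqref{D1}. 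The genuine divergence is the first limit in \eqref{v-l-eq2}: you differentiate the Crandall--Rabinowitz bifurcation curve and use $v^l=u'(s)/l'(s)$ with $l'(0)=l^*d_I\int_\Omega\beta\varphi_1^3/\int_\Omega\beta\varphi_1^2\neq0$, which is clean but requires checking the CR hypotheses and the fact (from uniqueness of the positive solution and $\|u^l\|_\infty\to0$) that the solutions $(l,u^l)$ near $l^*$ lie on that curve; the paper instead squeezes $A_1\psi^l<v^l<A_2\psi^l$, extracts a convergent subsequence, and pins the constant $c^{**}=c^*$ through an integral identity, staying entirely elementary. You also prove the $l\to\infty$ statements in \eqref{lem2-eq1} directly by explicit sub/supersolutions and the substitution $w^l=l(1-d_Iu^l)$ (with the monotonicity-in-$w$ check that makes the comparison legitimate), where the paper simply cites \cite{CastellanoSalako2021}; that is a self-contained bonus. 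One small point to tidy: in (iii) you only assert continuity of $\mathcal{N}_{d_I}$ at $l=l^*$, but continuous differentiability up to the endpoint (which the paper later uses through $\tfrac{d\mathcal{N}_{d_I}(l^*)}{dl}$) follows from the first limit in \eqref{v-l-eq2}, since $\tfrac{d}{dl}\int_\Omega(1-d_Iu^l)=-d_I\int_\Omega v^l$ converges as $l\to l^*$; you have all the ingredients, so just say it.
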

\begin{proof}{\rm (i)} It follows from standard results on the diffusive logistic equations. See for example \cite[Lemma 4.2-(i)]{CastellanoSalako2021}. { Note also that \eqref{lem2-eq1} is proved in \cite[Lemma 4.2-(i)]{CastellanoSalako2021}}. So, we shall show that \eqref{rev-0} holds.   To this end, we first write $u^l$  as
\begin{equation}\label{rev-1}
    u^{l}=(l-l^*)\psi^l\quad \forall\ l>l^*,
\end{equation}
and then find the limit of $\psi^l$ as $l$ approaches $l^*$. To achieve this,   we set 
\begin{equation}\label{rev-8}
c(l):=\int_{\Omega}\varphi_1\psi^{l}\quad \text{and}\quad \tilde{\psi}^l:=\psi^l-c(l)\varphi_1\quad \forall\ l> l^*.
\end{equation}
Observing that $\int_{\Omega}\tilde{\psi}^l\varphi_1=0$ since $\int_{\Omega}\varphi_1^2=1$, we have that 
\begin{equation}\label{rev-2}
  \quad\tilde{\psi}^l\in\mathcal{Z}_{\infty}\quad \text{and}\quad  u^l=(l-l^*)(c(l)\varphi_1+\tilde{\psi}^l)\quad \forall\ l>l^*.
\end{equation}
Next, setting $\hat{\psi}^l :=\frac{\tilde{\psi}^l}{l-l^*}$, then 
\begin{equation}\label{rev-2-21}
  \quad\hat{\psi}^l\in\mathcal{Z}_{\infty}\quad \text{and}\quad  u^l=(l-l^*)(c(l)\varphi_1+(l-l^*)\hat{\psi}^l)\quad \forall\ l>l^*.
\end{equation}
{\bf Step 1.} In this step, we show that there is a positive constant $K_{d_I}>0$ such that
\begin{equation}\label{rev-6}
    \frac{|\Omega|\beta_{\min}\varphi_{1,\min}}{ld_I\|\beta\|_{\infty}K_{d_I}}\le c(l)\le \frac{|\Omega|\|\beta\|_{\infty}\|\varphi_{1}\|_{\infty}}{ld_I\beta_{\min}}
    \quad \forall\ l>l^*.
\end{equation}
Multiply \eqref{R-star-pde} and \eqref{Eq1} by $u^l$ and $\varphi_{1}$ respectively. After integrating the resulting equations and then taking their difference, we obtain 
$$
0=\int_{\Omega}(l\beta -ld_I\beta u^l -\gamma)u^l\varphi_{1}-\int_{\Omega}(l^*\beta-\gamma)u^{l}\varphi_1.
$$
This gives 
\begin{equation}\label{rev-10}
    (l-l^*)\int_{\Omega}\beta u^l\varphi_{1}=d_Il\int_{\Omega}\beta (u^l)^2\varphi_{1}\quad \forall\ l>l^*.
\end{equation}
 Using the H\"older's inequality and \eqref{rev-10}, for all $l>l^*$, we obtain that 
\begin{align*}
    (l-l^*)\int_{\Omega}u^l\varphi_{1}\ge \frac{ld_I}{\|\beta\|_{\infty}}\int_{\Omega}\Big(u^l\varphi_{1}\Big)^2\frac{\beta}{\varphi_1} 
    \ge \frac{ld_{I}}{\|\beta\|_{\infty}}\Big(\frac{\beta}{\varphi_1}\Big)_{\min}\int_{\Omega}\Big(u^{l}\varphi_{1}\Big)^2 
    \ge \frac{ld_I\beta_{\min}}{|\Omega|\|\beta\|_{\infty}\|\varphi_{1}\|_{\infty}}\Big(\int_{\Omega}u^l\varphi_{1}\Big)^2.
\end{align*}
As a result,
\begin{equation}\label{rev-7}
\frac{|\Omega|\|\beta\|_{\infty}\|\varphi_{1}\|_{\infty}}{ld_I\beta_{\min}}\ge\frac{1}{l-l^*}\int_{\Omega}u^l\varphi_{1}\quad \forall\ l>l^*.
\end{equation}
On the other hand, it follows from \eqref{rev-1} and \eqref{rev-8} that 
\begin{equation}\label{rev-11}
\int_{\Omega}u^{l}\varphi_{1}=(l-l^*)\int_{\Omega}\psi^l\varphi_{1}=c(l)(l-l^*) \quad \forall\ l>l^*.
\end{equation}
By \eqref{rev-11} and \eqref{rev-7}, we have that
\begin{equation}\label{rev-9}
    c(l)\le \frac{|\Omega|\|\beta\|_{\infty}\|\varphi_{1}\|_{\infty}}{ld_I\beta_{\min}}\quad \forall\ l>l^*.
\end{equation}
Next, since $\|l\beta(1-d_Iu^l)-\gamma\|_{\infty}\to 0$ as $l\to\infty$ and $\|l\beta(1-d_Iu^l)-\gamma\|_{\infty}\to \|l^*\beta-\gamma\|_{\infty}$ as $l\to l^*$, then 
$$
\sup_{l>l^*}\|l\beta(1-d_Iu^l)-\gamma\|_{\infty}<\infty.
$$
It then follows from the Harnack's inequality for elliptic equations and the fact that $u^l$ solves \eqref{Eq1} that there is a positive constant $K_{d_I}$ independent of $l>l^*$ such that 
\begin{equation*}
    \|u^l\|_{\infty}\le K_{d_I}u^l_{\min}\quad \forall\ l>l^*.
\end{equation*}
This together with \eqref{rev-10}  yields that, for every $l>l^*$, 
\begin{align*}
    (l-l^*)\int_{\Omega}u^l\varphi_{1}\le \frac{d_Il}{\beta_{\min}}\int_{\Omega}\beta(u^l)^2\varphi_{1} \le\frac{d_IlK_{d_I}\|\beta\|_{\infty}u^l_{\min}}{\beta_{\min}}\int_{\Omega}u^l\varphi_{1}\le\frac{ld_IK_{d_I}\|\beta\|_{\infty}}{|\Omega|\varphi_{1,\min}\beta_{\min}}\Big(\int_{\Omega}u^l\varphi_{1}\Big)^2.
\end{align*}
Hence, by \eqref{rev-11},
\begin{equation}\label{rev-12}
\frac{|\Omega|\beta_{\min}\varphi_{1,\min}}{ld_IK_{d_I}\|\beta\|_{\infty}}\le\frac{1}{l-l^*}\int_{\Omega}u^l\varphi_{1}=c(l)\quad \forall\ l>l^*.
\end{equation}
It is clear from \eqref{rev-9} and \eqref{rev-12} that \eqref{rev-6} holds.\\
{\bf Step 2.} In this step, we show that 
\begin{equation}
    \lim_{l\to l^*}c(l)=\frac{\int_{\Omega}\beta\varphi_{1}^2}{d_Il^*\int_{\Omega}\beta\varphi^3_{1}}.
\end{equation}

Note from \eqref{rev-2} and the fact that $u^l$ solves \eqref{Eq1} that 
\begin{equation}\label{rev-3}
    \begin{cases}
    0=d_I\Delta(c(l)\varphi_{1}+\tilde{\psi}^l) +(l\beta(1-d_Iu)-\gamma)(c(l)\varphi_{1}+\tilde{\psi}^l) & x\in\Omega,\cr 
    0=\partial_{\vec{n}}\tilde{\psi}^l & x\in\partial\Omega.
    \end{cases}
\end{equation}
Moreover, since $\varphi_{1}$ satisfies \eqref{R-star-pde}, we get from \eqref{rev-3} that 
\begin{equation}\label{rev-4}
    \begin{cases}
    0=d_I\Delta\tilde{\psi}^l+(l-l^*)c(l)\beta( 1-ld_I(c(l)\varphi_{1}+\tilde{\psi}^l)))\varphi_{1} +(l\beta(1-d_Iu)-\gamma)\tilde{\psi}^l & x\in\Omega,\cr 
    0=\partial_{\vec{n}}\tilde{\psi}^l & x\in\partial\Omega.
    \end{cases}
\end{equation}
Recalling that $\hat{\psi}^l=\frac{\tilde{\psi}^l}{l-l^*}$ for every $l>l^*$, we deduce from \eqref{rev-4} that 
\begin{equation}\label{rev-5}
    \begin{cases}    0=\mathcal{L}^*\hat{\psi}^l+c(l)\beta( 1-ld_I(c(l)\varphi_{1}+(l-l^*)\hat{\psi}^l)))\varphi_{1} +\beta(l(1-d_Iu)-l^*)\hat{\psi}^l & x\in\Omega,\cr 
    0=\partial_{\vec{n}}\hat{\psi}^l & x\in\partial\Omega,
    \end{cases}
\end{equation}
which gives that 
$$
\hat{\psi}^l=\mathcal{L}^{*,-1}_{|\mathcal{Z}_q\cap{\rm Dom}_q}\Big(c(l)\beta( 1-ld_I(c(l)\varphi_{1}+(l-l^*)\hat{\psi}^{l})))\varphi_{1} +\beta(l(1-d_Iu)-l^*)\hat{\psi}^l \Big)\quad \forall\ q\ge 2,\  l>l^*.
$$
 Setting $M_q:=\big\|\mathcal{L}^{*,-1}_{|\mathcal{Z}_q\cap{\rm Dom}_q}\big\|$, then
\begin{align}\label{YTYT1}
    \|\hat{\psi}^l\|_{W^{2,q}(\Omega)} 
    \le & M_q\|c(l)\beta( 1-ld_I(c(l)\varphi_{1}+(l-l^*)\hat{\psi}^{l})))\varphi_{1} +\beta(l(1-d_Iu)-l^*)\hat{\psi}^l\|_{L^q(\Omega)}\cr 
    \le & M_q\|\beta\|_{\infty}\Big((l-l^*)(ld_Ic(l)\|\varphi_{1}\|_{\infty}+1)+ld_I\| u^l\|_{\infty}\Big)\|\hat{\psi}^l\|_{W^{2,q}(\Omega)}\cr
    &+ M_qc(l)\|\beta(1-ld_Ic(l)\varphi_{1})\|_{\infty}|\Omega|^{\frac{1}{q}}.
\end{align}
Observing from \eqref{rev-6}
 and the fact that $u^l\to 0$ as $l\to l^*$ that, for each $q\ge 1$, 
 $$ M_q\|\beta\|_{\infty}\Big((l-l^*)(ld_Ic(l)\|\varphi_{1}\|_{\infty}+1)+ld_I\| u^l\|_{\infty}\Big)\|\to 0\quad \text{as}\ l\to l^*,
 $$
 then, for each $q\ge 2$, there is $ K^*_{q}>0$ such that 
 \begin{equation}\label{b-rev-1}
     \|\hat{\psi}^l\|_{W^{2,q}(\Omega)}\le K^*_q\quad \forall\ 0<l^*<l<l^*+1.
 \end{equation}
 Choosing $q\gg n$ such that $W^{2,q}(\Omega)$ is compactly embedded in $C^1(\overline{\Omega})$, it follows from \eqref{rev-6}, \eqref{b-rev-1}, and the fact that $u^l\to 0$ as $l\to l^*$ (possibly after passing to a subsequence)  that $c(l)\to c^*$  and $\hat{\psi}^l\to \hat{\psi}^*$ in $C^{1}(\overline{\Omega})$ as $l\to l^*$, where  $c^*$ is a positive number and $ \hat{\psi}^*\in C^2(\Omega)\cap\mathcal{Z}_{\infty}$  satisfies 
\begin{equation}\label{rev-13}
    \begin{cases}
        -\mathcal{L}^*\hat{\psi}^*=c^*\beta(1-l^*d_Ic^*\varphi_{1})\varphi_{1} & x\in\Omega,\cr 
        0=\partial_{\vec{n}}\hat{\psi}^* & x\in\partial\Omega.
    \end{cases}
\end{equation}
Hence, since $\mathcal{L}^*({\rm Dom}_{\infty}\cap \mathcal{Z}_{\infty})=\mathcal{Z}_{\infty}\subset\mathcal{Z}_2$,  multiplying \eqref{rev-13} by $\varphi_{1}$ and integrating the resulting equation on $\Omega$, we obtain that 
\begin{align*}
    0=c^*\int_{\Omega}\beta(1-l^*c^*d_I\varphi_{d_I})\varphi_{1}^2,
\end{align*}
which due to the fact that $c^*>0$ gives
$ 
c^*=\Big({\int_{\Omega}\beta\varphi_{1}^2}\Big)/\Big({l^*d_I\int_{\Omega}\beta\varphi_{1}^3}\Big).
$  Since $c^*$ is independent of the chosen subsequence, we then conclude that $c(l)\to c^*$ as $l\to l^*$. Moreover, since $\mathcal{L}^*$ is invertible on $\mathcal{Z}_{\infty}\cap {\rm Dom}_{\infty}$, we have that $\hat{\psi}^*$ is the unique solution of \eqref{rev-13} in $\mathcal{Z}_{\infty}\cap{\rm Dom}_{\infty}$ and  $\hat{\psi}^l\to \hat{\psi}^* $ in $C^1(\overline{\Omega})$ as $l\to l^*$. Therefore, we have 
$$
\frac{u^l}{l-l^*}=\psi^l=c(l)\varphi_{1}+(l-l^*)\hat{\psi}^l\to \left(\frac{\int_{\Omega}\beta\varphi_{1}^2}{l^*d_I\int_{\Omega}\beta\varphi_{1}^3}\right)\varphi_{1}\quad \text{as}\ l\to l^* \ \text{ in }\ C^1(\overline{\Omega}).
$$

\quad {\rm (ii)} The regularity of $u^l$ with respect to $l$ and the fact that $v^l>0$ and solves \eqref{v-l-eq1} is already proved in \cite[Lemma 4.2-(i)]{CastellanoSalako2021}.  Next, we prove that \eqref{v-l-eq2} holds.  For each $l>l^*$, let $\psi^l=\frac{u^l}{l-l^*}=c(l)\varphi_{1}+\tilde{\psi}^l$ be defined as in \eqref{rev-1}. Thanks to \eqref{rev-0}, for any positive number $A>0$, 
$$
1-d_Iu^l-lA\psi^l\to 1-Ac^*\varphi_{1}\quad \text{as}\quad l\to l^*\ \ \text{uniformly in}\ \Omega,
$$
where $c^*=\int_{\Omega}\beta\varphi_{1}^2/(d_Il^*\int_{\Omega}\beta\varphi_{1}^3)>0$. Therefore, we can choose $0<A_1<A_2$ such that 
\begin{equation*}
  1-d_Iu^l-lA_2\psi^l<0<1 -d_Iu^l-lA_1\psi^l\quad l^*<l<l^*+\varepsilon_0 
\end{equation*}
for some $\varepsilon_0>0$.  Hence, thanks to \eqref{rev-3}, we have that 
\begin{equation*}
    d_I\Delta (A_1\psi^l)+(l\beta(1-2d_Iu^l)-\gamma)(A_1\psi^l)+\beta(1-d_Iu^l)u^l= \beta(1-d_Iu^l-lA_1\psi^l)u^l>0 \quad x\in\Omega
\end{equation*}
and 
\begin{equation*}
    d_I\Delta (A_2\psi^l)+(l\beta(1-2d_Iu^l)-\gamma)(A_2\psi^l)+\beta(1-d_Iu^l)u^l= \beta(1-d_Iu^l-lA_2\psi^l)u^l<0 \quad x\in\Omega
\end{equation*}
for every $l^*<l<l^*+\varepsilon_0$. It then follows from \eqref{v-l-eq1} and the comparison principle for elliptic equations that 
\begin{equation*}
    A_1\psi^l<v^l<A_2\psi^l\quad 0<l^*<l<l^*+\varepsilon_0.
\end{equation*}
Therefore, by \eqref{v-l-eq1} and the estimates for elliptic equations (possibly after passing to a subsequence), there is a strictly positive function $v^*\in C^2(\Omega)$ and $v^l\to v^*$ as $l\to l^*$ in $C^1(\overline{\Omega})$. Moreover, $v^*$ satisfies
$$
\begin{cases}
    0=d_I\Delta v^*+(l^*\beta-\gamma)v^* & x\in\Omega\cr
    0=\partial_{\vec{n}}v^* & x\in\partial\Omega.
\end{cases}
$$
So, we must have that $v^*=c^{**}\varphi_{1}$ for some positive number $c^{**}$. Next,  we multiply \eqref{rev-3} and \eqref{v-l-eq1} by $v^l$ and $\psi^l$ respectively, and then integrate on $\Omega$. After taking the difference of the resulting equations and using $u^l=(l-l^*)\psi^l$, we obtain that 
$$0=\int_{\Omega}\beta(1-d_Iu^l-ld_Iv^l)(\psi^l)^2\quad \forall\ l>l^*.$$
 Letting $l\to l^*$ in the last equation yields 
$ 
0=\int_{\Omega}\beta(1-d_Il^*c^{**}\varphi_{1})(c^*\varphi_{1})^2.
$ Solving for $c^{**}$, we get  $ 
c^{**}=c^*
$  is independent of the chosen subsequence. Therefore,  $v^l\to c^*\varphi_{1}$ as $l\to l^*$ in $C^1(\overline{\Omega})$.
 
\quad Finally, set $p^l=l^2v^{l}$ for each $l>l^*$. By direct computations, it follows from \eqref{v-l-eq1} that $p^l$ satisfies
\begin{equation}\label{D1}
    \begin{cases}
        0=\frac{d_I}{l}\Delta p^l +\frac{\beta}{l}\big( z^l-\frac{\gamma}{\beta}\big)p^l+u^l\beta(z^l-d_Ip^l) & x\in\Omega,\cr 
        0=\partial_{\vec{n}}p^l & x\in\partial\Omega.
    \end{cases}
\end{equation}
where $z^l=l(1-d_Iu^l)$ for all $l>l^*$. Therefore, since $u^{l}\to \frac{1}{d_I}$ and $z^l\to\frac{\gamma}{\beta}$ as $l\to\infty$ in $C(\overline{\Omega})$ by \eqref{lem2-eq1}, we can employ the singular perturbation theory for elliptic equations to deduce from \eqref{D1} that $p^l\to \frac{\gamma}{d_I\beta}$ as $l\to\infty$ uniformly on $\overline{\Omega}$, which completes the proof of  \eqref{v-l-eq2}.

\quad {\rm (iii)} It follows from \cite[Lemma 4.2-(iii)]{CastellanoSalako2021}.
\end{proof}

The next result shows the relationship between EE solutions of \eqref{e1} and the solutions of \eqref{Eq1}

\begin{lem}\label{lem3} Let $d_S>0$, $d_I>0$ and $\mathcal{R}_0>0$.
\begin{itemize}
    \item[\rm (i)] If $(S,I)$ is an EE solution of \eqref{e1} then 
    \begin{equation}\label{kappa-def}
        \kappa =d_SS+d_II
    \end{equation} 
    is a constant function. Furthermore, setting 
    \begin{equation}\label{tilde-s-i-ded}
        \tilde{S}=\frac{S}{\kappa}\quad \text{and}\quad \tilde{I}=\frac{I}{\kappa},
    \end{equation}
    then $\tilde{I}=u^{\frac{\kappa}{d_S}}$ is the positive solution of \eqref{Eq1} with $l=\frac{\kappa}{d_S}>l^*$. Moreover, since $\mathcal{R}_0=N/(l^*|\Omega|)$, then $\mathcal{R}_0=\mathcal{N}_{d_I}(\kappa/d_S)+\frac{\kappa}{|\Omega|l^*}\int_{\Omega}u^{\frac{\kappa}{d_S}}$, where $\mathcal{N}_{d_I}$ is defined in \eqref{N-d_I-def}.
    \item[\rm (ii)] If for some $l>l^*$, $\mathcal{R}_0$ satisfies  
    \begin{equation}\label{N-equation}
 \mathcal{R}_0=\mathcal{N}_{d_I,d_S}(l):=\mathcal{N}_{d_I}(l)+\frac{ld_S}{l^*|\Omega|}\int_{\Omega}u^l,
    \end{equation}
    where $\mathcal{N}_{d_I}(l)$ is defined as in \eqref{N-d_I-def}, then $(S,I):=(l(1-d_Iu^l),d_Slu^l)$ is an EE solution of \eqref{e1}.
\end{itemize}
\end{lem}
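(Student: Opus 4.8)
The plan is to exploit the change of variables $\kappa=d_SS+d_II$ from \eqref{kappa-def}, which turns an EE of \eqref{e1} into a positive solution of the scalar logistic-type equation \eqref{Eq1}; running this correspondence in reverse gives part (ii). For part (i), let $(S,I)$ be an EE of \eqref{e1}. Adding the two steady-state equations gives $\Delta\kappa=d_S\Delta S+d_I\Delta I=0$ in $\Omega$, while $\partial_{\vec{n}}\kappa=d_S\partial_{\vec{n}}S+d_I\partial_{\vec{n}}I=0$ on $\partial\Omega$, so $\kappa$ is constant. To see $\kappa>0$, I would integrate the $S$-equation over $\Omega$ and use the Neumann condition to get $\int_\Omega I(\gamma-\beta S)=0$; if $\kappa\le 0$ then, since $I>0$, we would have $S=(\kappa-d_II)/d_S<0$ on $\overline{\Omega}$, forcing $I(\gamma-\beta S)>0$ everywhere, a contradiction. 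Hence $\kappa$ is a positive constant, and we may set $\tilde S=S/\kappa$, $\tilde I=I/\kappa$ as in \eqref{tilde-s-i-ded}, which satisfy $d_S\tilde S+d_I\tilde I\equiv 1$, i.e. $\tilde S=(1-d_I\tilde I)/d_S$.

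Substituting $S=\kappa\tilde S$, $I=\kappa\tilde I$ into the $I$-equation and dividing by $\kappa$ gives $0=d_I\Delta\tilde I+\big(l\beta(1-d_I\tilde I)-\gamma\big)\tilde I$ with $l:=\kappa/d_S$, which is exactly \eqref{Eq1}; since $\tilde I>0$, Lemma \ref{lem2}-{\rm(i)} forces $l>l^*$ and $\tilde I=u^{\kappa/d_S}$. Then $S=l(1-d_Iu^l)$, $I=d_Slu^l$, and so
\begin{equation*}
N=\int_\Omega(S+I)=l\int_\Omega\big(1-(d_I-d_S)u^l\big)=l\int_\Omega(1-d_Iu^l)+ld_S\int_\Omega u^l .
\end{equation*}
Dividing by $l^*|\Omega|$, recalling $\mathcal{R}_0=N/(l^*|\Omega|)$, the definition \eqref{N-d_I-def} of $\mathcal{N}_{d_I}$, and $\kappa=ld_S$, yields $\mathcal{R}_0=\mathcal{N}_{d_I}(\kappa/d_S)+\frac{\kappa}{l^*|\Omega|}\int_\Omega u^{\kappa/d_S}$, proving (i).

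For part (ii), given $l>l^*$ satisfying \eqref{N-equation}, I would set $(S,I)=(l(1-d_Iu^l),\,d_Slu^l)$ and verify directly that it is an EE. Positivity of $S$ and $I$ is immediate from $0<u^l<1/d_I$ (Lemma \ref{lem2}-{\rm(i)}). Since $u^l$ solves \eqref{Eq1}, substituting into the $I$-equation and factoring out $d_Sl$ reduces it to $d_I\Delta u^l+(l\beta(1-d_Iu^l)-\gamma)u^l=0$, which holds; the $S$-equation reduces to the same identity after writing $d_S\Delta S=-d_Sld_I\Delta u^l$ and $\gamma I-\beta SI=-d_Slu^l\big(l\beta(1-d_Iu^l)-\gamma\big)$, and the Neumann conditions are inherited from $u^l$. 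Finally $\int_\Omega(S+I)=l\int_\Omega(1-d_Iu^l)+ld_S\int_\Omega u^l=l^*|\Omega|\,\mathcal{N}_{d_I,d_S}(l)=l^*|\Omega|\,\mathcal{R}_0=N$, so the integral constraint in \eqref{e1} holds and $(S,I)$ is an EE.

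All the manipulations are elementary; the only points needing a little care are the positivity of the constant $\kappa$ (which is what licenses dividing by $\kappa$ and invoking Lemma \ref{lem2}-{\rm(i)}) and keeping the normalization bookkeeping between $N$, $\mathcal{R}_0$, $l^*$, and the integrals $\int_\Omega u^l$ consistent. I do not expect a genuine obstacle.
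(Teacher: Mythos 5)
Your argument is correct and is exactly the reduction the paper has in mind: the paper omits the proof, citing Lemma 4.4 of \cite{CastellanoSalako2021}, and that proof proceeds by the same steps you give (harmonicity of $\kappa=d_SS+d_II$ under Neumann conditions, rescaling by the positive constant $\kappa$ to land on \eqref{Eq1} with $l=\kappa/d_S$, uniqueness of $u^l$ from Lemma \ref{lem2}-(i), and the bookkeeping with $N=l^*|\Omega|\mathcal{R}_0$; part (ii) by direct verification). Your extra care about $\kappa>0$ via integrating the $S$-equation is sound, though it is immediate once one recalls that an EE has $I>0$ (and $S\ge 0$).
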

\begin{proof} The proof is similar to that of \cite[Lemma 4.4]{CastellanoSalako2021}, hence it is omitted.
    
\end{proof}

\section{Proofs of Main Results}\label{proofs}

We present the proof of our main results in this section.

\begin{proof}[Proof of Theorem \ref{T0}] Let $d_I>0$ and define \begin{equation}\label{C0} 
 \mathcal{R}_0^{\rm low}=\inf_{l\ge l^*}\mathcal{N}_{d_I}(l),
\end{equation}
where $\mathcal{N}_{d_I}(l)$ is defined by \eqref{N-d_I-def}.  It is clear from \eqref{lem2-eq2} that $0\le \mathcal{R}_0^{\rm low}\le \{1,\overline{(\gamma/\beta)}\mathcal{R}_1\}$. Furthermore, since $\mathcal{N}_{d_I}(l)>0$ for every $l\ge l^*$ and converges to a positive number as $l$ approaches infinity, then $\mathcal{R}_0^{\rm low}>0$. 
Next,  we prove assertions  {\rm (i)}-{\rm (iii)}.

\quad {\rm (i)}   If $(S,I)$ is an EE solution of \eqref{e1} for some $d_S>0$, then by Lemma \ref{lem3}, $l:=\frac{\kappa}{d_S}>l^*$, where $\kappa$ is defined by \eqref{kappa-def}. Furthermore, by Lemma \ref{lem3}-{\rm (i)}, we have that 
$$ 
\mathcal{R}_0
=\mathcal{N}_{d_I}(\frac{\kappa}{d_S}) +\frac{\kappa}{l^*|\Omega|}\int_{\Omega}u^{\frac{\kappa}{d_S}}>\mathcal{R}_0^{\rm low}.
$$
This shows that \eqref{e1} has no EE solution whenever $\mathcal{R}_0\le \mathcal{R}_0^{\rm low} $ and $d_S>0$.

\quad {\rm (ii)} Suppose that $\mathcal{R}_0>\mathcal{R}_0^{\rm low}$. Therefore there is $l(\mathcal{R}_0,d_I)>l^*$ such that $\mathcal{N}_{d_I}(l(\mathcal{R}_0,d_I))<\mathcal{R}_0$.
   Set 
\begin{equation}\label{C4-2}
    d_1^*:=
        \frac{(\mathcal{R}_0-\mathcal{N}_{d_I}(l(\mathcal{R}_0,d_I)))|\Omega|l^*}{l(\mathcal{R}_0,d_I)\int_{\Omega}u^{l(\mathcal{R}_0,d_I)}}>0. 
\end{equation}
For every $d_S>0$, consider the function $\mathcal{N}_{d_I,d_S}$ defined as in \eqref{N-equation}. 
     Then $\mathcal{N}_{d_I,d_S}$ is continuously differentiable in $l\ge l^*$. Furthermore, 
    \begin{equation}\label{EE1}
\mathcal{N}_{d_I,d_S}(l^*)=1 \quad \text{and}\quad \lim_{l\to\infty}\mathcal{N}_{d_I,d_S}(l)=\infty.
    \end{equation} 
    Next, fix $0<d_S<d_1^*$. It follows from \eqref{C4-2} that  \begin{align}\label{C2}
    \mathcal{N}_{d_I,d_S}(l(\mathcal{R}_0,d_I))=&\mathcal{N}_{d_I}(l(\mathcal{R}_0,d_I))+\frac{d_Sl(\mathcal{R}_0,d_I)}{l^*|\Omega|}\int_{\Omega}u^{l(\mathcal{R}_0,d_I)}\cr <&\mathcal{N}_{d_I}(l(\mathcal{R}_0,d_I))+\frac{d(\mathcal{R}_0,d_I)l(\mathcal{R}_0,d_I)}{l^*|\Omega|}\int_{\Omega}u^{l(\mathcal{R}_0,d_I)} =\mathcal{R}_0.
    \end{align}
      Thus, by the intermediate value theorem, there is $l(\mathcal{R}_0,d_I,d_S)>l(\mathcal{R}_0,d_I)$ such that $\mathcal{N}_{d_I,d_S}(l(\mathcal{R}_0,d_I,d_S))=\mathcal{R}_0$. This together with \eqref{EE1} imply that the quantity  
      \begin{equation}\label{l-high-def}
        l_{\rm high}(d_S):=\max\{l>l(\mathcal{R}_0,d_I)\ :\ \mathcal{N}_{d_I,d_S}(l)=\mathcal{R}_0\}
    \end{equation}
    is a positive real number.  Observe that
    \begin{equation}\label{a-rev-0}
        \mathcal{N}_{d_I,d_S}(l_{\rm high}(d_S))=\mathcal{R}_0\quad \text{and}\quad \mathcal{N}_{d_I,d_S}(l)>\mathcal{R}_0\quad \forall\ l>l_{\rm high}(d_S).
    \end{equation}
     By Lemma \ref{lem3}-(ii),
    \begin{equation}\label{high-ee}(S_{\rm high},I_{\rm high}):=(l_{\rm high}(d_S)(1-d_Su^{l_{\rm high}(d_S)}),d_Sl_{\rm high}(d_S)u^{l_{\rm high}(d_S)})
    \end{equation}
    is an EE solution of  \eqref{e1}. Finally, we show that \eqref{T0-eq1} holds. So, suppose that $(S,I)$ is another EE solution of \eqref{e1}. Then, by Lemma \ref{lem3}  we  have that $\frac{\kappa}{d_S}>l^*$ and  $ I=d_{S}(\frac{\kappa}{d_S})u^{\frac{\kappa}{d_S}}$. Hence, since the mapping $(l^*,\infty)\ni l\mapsto lu^{l}$ is strictly increasing, and $\mathcal{N}_{d_I,d_S}(\frac{\kappa}{d_S})=\mathcal{R}_0=\mathcal{N}_{d_I,d_S}(l_{\rm high}(d_S))$, then  $\frac{\kappa}{d_S}<l_{\rm high}(d_S)$, which yields $I=d_{S}(\frac{\kappa}{d_S})u^{\frac{\kappa}{d_S}}<I=d_{S}l_{\rm high}(d_S)u^{l_{\rm high}(d_S)}=I_{\rm high}$. 

    \quad {\rm (iii)}  Suppose that $\mathcal{R}_0^{\rm low}<1$ and $\mathcal{R}_0^{\rm low}<\mathcal{R}_0<1$. 
     Let $d_1^*$ be given by {\rm (i)} and   $l(\mathcal{R}_0,d_I)$ be as in the proof of {\rm (ii)}. Fix $0<d_S<d_1^*$. Observe that 
    $  
    \mathcal{N}_{d_I,d_S}(l(\mathcal{R}_0,d_I))<\mathcal{R}_0<1=\mathcal{N}_{d_{I},d_S}(l^*)$.  Therefore, by the intermediate value theorem, there is $\tilde{l}(\mathcal{R}_0,d_I,d_S)\in (l^*,l(\mathcal{R}_0,d_I))$ such that $\mathcal{N}_{d_I,d_S}(\tilde{l}(\mathcal{R}_0,d_I,d_S))=\mathcal{R}_0$. This implies that the quantity

    \begin{equation}\label{l-low-def}
        l_{\rm low}(d_S):=\min\{l\in [l^*,l(\mathcal{R}_0,d_I))\ :\ \mathcal{N}_{d_I,d_S}(l)=\mathcal{R}_0\}
    \end{equation}
    is well defined and satisfies $l^*<l_{\rm low}(d_S)<l(\mathcal{R}_0,d_I)$.  Observe that
    \begin{equation}\label{a-rev-1}
        \mathcal{N}_{d_I,d_S}(l_{\rm low}(d_S))=\mathcal{R}_0\quad \text{and}\quad \mathcal{N}_{d_I,d_S}(l)>\mathcal{R}_0\quad \forall\ l^*\le l< l_{\rm low}(d_S)).
    \end{equation}
      Now, by Lemma \ref{lem3}-(ii),
    \begin{equation}\label{low-ee}(S_{\rm low},I_{\rm low}):=(l_{\rm low}(d_S)(1-d_Su^{l_{\rm low}(d_S)}),d_Sl_{\rm low}(d_S)u^{l_{\rm low}(d_S)})
    \end{equation}
    is an EE solution of  \eqref{e1}.  Since $l_{\rm low}(d_S)<l(\mathcal{R}_0,d_I)<l_{\rm high}(d_S)$ and the mapping $(l^*,\infty)\ni l\mapsto lu^l$ is strictly increasing, then \eqref{T0-eq2} holds.  Using again the fact that the mapping $(l^*,\infty)\ni l\mapsto lu^l$ is strictly increasing, it can be shown as in the case of \eqref{T0-eq1} that any other EE solution of \eqref{e1}, if exists, must satisfy \eqref{T0-eq3}.   
    \end{proof}

\begin{proof}[Proof of Proposition \ref{prop0_1}]
    It follows from Lemma \ref{lem1}-{\rm (ii)} that $1>
    \overline{(\gamma/\beta)}\mathcal{R}_1$ for every $d_I>\mathcal{R}_1^{-1}\big(1/\overline{(\gamma/\beta)}\big)$. Hence, $1>\overline{(\gamma/\beta)}\mathcal{R}_1\ge \mathcal{R}_0^{\rm low}$ for every $d_I>\mathcal{R}_1^{-1}\big(1/\overline{(\gamma/\beta)}\big)$.
\end{proof}

Next, we give a proof of Theorem \ref{T1}. 

\begin{proof}[Proof of Theorem \ref{T1}] Fix $d_I>0$ and define $M_{d_I}^*:=\frac{d_I}{|\Omega|}\sup_{l>l^*}\int_{\Omega}(u^l+lv^l)$, where for every $l>l^*$, $u^l$ and $v^l$ are the unique positive solutions of \eqref{Eq1} and \eqref{v-l-eq1}, respectively. Note from  \eqref{lem2-eq1} and \eqref{v-l-eq2} that $u^l\to \frac{1}{d_I}$ and $lv^l\to 0$ as $l\to\infty$ uniformly on $\Omega$.  Hence, 
$ \int_{\Omega}(u^l+lv^l)\to {|\Omega|}/{d_I}$ as $l\to\infty$. 
Note also from \eqref{lem2-eq1} and \eqref{v-l-eq2} that 
$ \int_{\Omega}(u^l+lv^l)\to {\Big(\int_{\Omega}\varphi_{1}\Big)\Big(\int_{\Omega}\beta\varphi_{1}^2\Big)}/{\big(d_I\int_{\Omega}\beta\varphi_{1}^3\big)}
$ as $l\to l^*$.  Hence 
$ \max\Big\{1,\frac{\big(\int_{\Omega}\varphi_{1}\big)\big(\int_{\Omega}\beta\varphi_{1}^2\Big)}{|\Omega|\int_{\Omega}\beta\varphi_{1}^3}\big\}\le M^*_{d_I}<\infty$.
Defining $    m_{d_I}^*:=1/{M^*_{d_I}}$, then $0<m^*_{d_I}\le 1$.  As a result, $d_{\rm low}=(1-m_{d_I}^*)d_I$ satisfies $0\le d_{\rm low}<d_I$.

\quad For every $d_S>0$, consider the function $\mathcal{N}_{d_I,d_S}$ be defined as in \eqref{N-equation}. Taking the derivative of the function $\mathcal{N}_{d_I,d_S}$  (see \eqref{N-equation}) with respect to $l$, we get 
\begin{equation}\label{C1-1}
    \frac{d\mathcal{N}_{d_I,d_S}(l)}{dl}=\frac{1}{l^*|\Omega|}\Big( |\Omega|+(d_S-d_I)\int_{\Omega}(lv^l+u^l)\Big) \quad \forall\ l>l^*.
\end{equation} 
From this point, we suppose that $d_S>d_{I}(1-m_{d_I}^*)$ and then show that 
\begin{equation}\label{C1-3}
\frac{d\mathcal{N}_{d_I,d_S}(l)}{dl}>0 \quad \forall\ l>l^*.
\end{equation}
If $d_S\ge d_I$, it is easy to see from \eqref{C1-1} that \eqref{C1-3} holds. So, we suppose that $d_{\rm low}<d_S<d_I$. Hence, by \eqref{C1-1}, we have that 
\begin{align*}
  \frac{d\mathcal{N}_{d_I,d_S}(l)}{dl} = \frac{1}{l^*}\left(1-\Big(1-\frac{d_S}{d_{I}}\Big)\frac{d_I}{|\Omega|}\int_{\Omega}(u^l+lv^l)\right) 
  \ge  \frac{M_{d_I}^*}{d_Il^*}\Big(d_S-d_{\rm low}\Big)>0,  
\end{align*}
which shows that \eqref{C1-3} holds in this subcase as well. Therefore, when $d_S>d_{\rm low}$, the mapping $\mathcal{N}_{d_I,d_S}$ is strictly increasing on $[l^*,\infty)$. Hence, thanks to Lemma \ref{lem3}, \eqref{e1} has an EE solution if and only if $\mathcal{R}_0>1$. Moreover, in this case, when an EE exists, it is unique. 
\end{proof}
\begin{proof}[Proof of Remark \ref{RK0_1}] Fix $d_I>0$ and let $M_{d_I}^*$ and $m_{d_I}^*$ be as in the proof of Theorem \ref{T1} so that $d_{\rm low}=d_I(1-m_{d_I}^*)$. Now  suppose that $0<d_{\rm low}$ and  fix $0<d_S<d_{\rm low}$. Hence,  $m_{d_I}^*<1-\frac{d_S}{d_I}$, which gives  
 $ 
0<{1}/({1-\frac{d_S}{d_I}})<{1}/{m_{d_I}^*}=M_{d_I}^*. $
Therefore, there is $l_0>l^*$ such that 
$  
0<\frac{1}{1-\frac{d_S}{d_I}}<\frac{d_I}{|\Omega|}\int_{\Omega}(u^{l_0}+l_0v^{l_0}),
$ 
 which implies that 
\begin{equation}\label{GH1}
    \frac{d\mathcal{N}_{d_I,d_S}(l_0)}{dl}=\frac{1}{l^*}\left(1-\Big(1-\frac{d_S}{d_{I}}\Big)\frac{d_I}{|\Omega|}\int_{\Omega}(u^{l_0}+l_0v^{l_0})\right)<0.
\end{equation}
On the other hand, we know from \eqref{lem2-eq1} and \eqref{v-l-eq2} that 
\begin{equation}\label{GH2}
    \lim_{l\to\infty}\frac{d\mathcal{N}_{d_I,d_S}(l)}{dl}=\frac{1}{l^*|\Omega|}\Big(|\Omega|+(d_S-d_I)\frac{|\Omega|}{d_I}\Big)=\frac{d_S}{d_Il^*}>0.
\end{equation}
Thanks to \eqref{GH1} and \eqref{GH2}, we deduce that  there are $l_0<l_1<l_2$ such that $\mathcal{N}_{d_I,d_S}(l_1)=\mathcal{N}_{d_I,d_S}(l_2)$. As a result, for $\mathcal{R}_0=\mathcal{N}_{d_I,d_S}(l_1)=\mathcal{N}_{d_I,d_S}(l_2)$, we have from Lemma \ref{lem3} that 
$$ 
(S_1,I_1)=(l_1(1-d_Iu^{l_1}),d_Sl_1u^{l_1})
\quad 
\text{and} 
\quad  
(S_2,I_2)=(l_2(1-d_Iu^{l_2}),d_Sl_2u^{l_2})
$$
 are two distinct EE solutions of \eqref{e1}. This completes the proof of the remark.
    
\end{proof}

Next, we give a proof of Theorem \ref{T2-2}. 

 \begin{proof}[Proof of Theorem \ref{T2-2}]  Suppose that \eqref{T2-2-eq1} holds.  By \eqref{v-l-eq2} and \eqref{C1-1}, we obtain  
 \begin{equation}\label{LK1}
 \frac{d\mathcal{N}_{d_I}(l^*)}{dl}= \Big(1-{\big(\overline{\varphi_1}\big)\big(\overline{\beta\varphi_1^2}\big)}\big/{\overline{\beta\varphi_1^3}}\Big)/l^*<0.
 \end{equation}
Thanks to \eqref{LK1}, we can chose $d_2^*>0$ small enough such that $ \frac{d\mathcal{N}_{d_I,d_S}(l^*)}{dl}<0$ for all $0\le d_S<d_2^*$. Now, fix $0<d_S<d_2^*$. 
Define the curve $\mathcal{F}_{d_I,d_S} : [l^*,\infty)\to \mathbb{R}_+\times [C(\overline{\Omega})]^2$ by 
\begin{equation}\label{FRD1}
    \mathcal{F}_{d_I,d_S}(l)=\Big(\mathcal{N}_{d_{I},d_S}(l),l(1-d_Iu^l),d_Slu^l\Big) \quad \forall\ l\ge l^*,
\end{equation}
where $u^l$ is the unique nonnegative stable solution of \eqref{Eq1}. Recalling that $\mathcal{R}_0=N/(|\Omega|l^*)$, then $(\frac{N}{|\Omega|},0)=(l^*\mathcal{R}_0,0)$ is the unique DFE solution of \eqref{e1}. Hence $(l^*,0)$ is the unique DFE solution of \eqref{e1} when $\mathcal{R}_0=1$. Observe also that $\mathcal{F}_{d_I,d_S}(l^*)=(1,l^*,0)$. By Lemma \ref{lem3}, system \eqref{e1} has an EE solution $(S,I)$ for some $\mathcal{R}_0>0$ if and only if $\mathcal{R}_0=\mathcal{N}_{d_I,d_S}(l)$ and $(S,I)=(l(1-d_Iu^l),ld_Su^l)$ for some $l>l^*$.  Therefore, as $\mathcal{R}_0$ increases from zero to infinity, the EE solutions of \eqref{e1} are parametrized by the curve $\mathcal{F}_{d_I,d_S}$. This curve is simple and unbounded since the mapping $l\mapsto lu^l$ is strictly increasing with $\|lu^l\|_{\infty}\to\infty$ as $l\to\infty$.  Furthermore, since $\frac{d\mathcal{N}_{d_I,d_S}(l^*)}{dl}<0$, then the curve parametrized by $\mathcal{F}_{d_I,d_S}$ bifurcates from the left at $\mathcal{R}_0=1$.


\end{proof}

Note that the expression at the right hand side of \eqref{Eq1} is analytic in the variables $l$ and $u$. Hence, we can employ implicit function theorem \cite[Page 15]{Dan_Henry} and linear stability of  $u^l$ to derive that $u^l$ is analytic in $l>l^*$. Hence, thanks to Lemma \ref{lem2} and the limit \eqref{GH2},  we have the following: 

\begin{lem}\label{lem4} Fix $d_I>0$ and $d_S>0$. Consider the mapping $\mathcal{N}_{d_I,d_S}$  defined by \eqref{N-equation} on $[l^*,\infty)$. Then $\mathcal{N}_{d_I,d_S}$ is continuously differentiable on $[l^*,\infty)$ and analytic on $(l^*,\infty)$. Furthermore, if $\frac{d\mathcal{N}_{d_I,d_S}(l^*)}{dl}\ne 0$, then there exist $m$ numbers $l^*_1=l^*<l^*_2<\cdots<l^*_m<l^*_{m+1}=\infty$, $m\ge 1$, such that $\mathcal{N}_{d_I,d_S}$ is strictly monotone on $[l^*_i,l^*_{i+1})$ for each $i=1,\cdots,m$;  $\mathcal{N}_{d_I,d_S}$ is strictly increasing on $[l^*_{m},\infty)$; and if $m\ge 2$, $\mathcal{N}_{d_I,d_S}$ changes its monotonicity  at each $l_i^*$, $i=2,\cdots,m$.     
\end{lem}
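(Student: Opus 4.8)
\textbf{Proof proposal for Lemma \ref{lem4}.}

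The plan is to assemble the claimed structure from three ingredients: the analyticity of $l\mapsto u^l$, the already-established continuous differentiability of $\mathcal{N}_{d_I,d_S}$ (Lemma \ref{lem2}-(iii) together with \eqref{C1-1}), and the behaviour of $\mathcal{N}_{d_I,d_S}'$ at the two endpoints $l=l^*$ and $l=\infty$. First I would record that $u^l$ is analytic on $(l^*,\infty)$: the right-hand side of \eqref{Eq1} is jointly analytic in $(l,u)$, and $u^l$ is a linearly stable (nondegenerate) solution, so the analytic implicit function theorem \cite[Page 15]{Dan_Henry} applies locally and, by uniqueness of the positive solution, globally on $(l^*,\infty)$. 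Consequently $v^l=\partial_l u^l$ is analytic, and by \eqref{C1-1} the derivative $\mathcal{N}_{d_I,d_S}'(l)=\frac{1}{l^*|\Omega|}\big(|\Omega|+(d_S-d_I)\int_\Omega(lv^l+u^l)\big)$ is analytic on $(l^*,\infty)$; continuous differentiability up to $l^*$ is Lemma \ref{lem2}-(iii).

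Next I would exploit analyticity to control the zero set of $\mathcal{N}_{d_I,d_S}'$. Since $\mathcal{N}_{d_I,d_S}'$ is real-analytic and not identically zero (for instance because its limit as $l\to\infty$ equals $\frac{d_S}{d_I l^*}>0$ by \eqref{GH2}), its zeros in any compact subinterval of $(l^*,\infty)$ are isolated and finite in number. The remaining point is to rule out an accumulation of zeros at either endpoint. At $l=\infty$ this is immediate from \eqref{GH2}: there is $L$ with $\mathcal{N}_{d_I,d_S}'(l)>0$ for all $l\ge L$, so $\mathcal{N}_{d_I,d_S}$ is strictly increasing on $[L,\infty)$ and has no critical points there. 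At $l=l^*$ I use the hypothesis $\mathcal{N}_{d_I,d_S}'(l^*)\neq 0$ together with continuity of $\mathcal{N}_{d_I,d_S}'$ to get a one-sided neighbourhood $[l^*,l^*+\varepsilon)$ free of zeros. Hence the zero set of $\mathcal{N}_{d_I,d_S}'$ is a finite subset $\{l^*_2<\cdots<l^*_m\}$ of $(l^*,\infty)$ (with $m=1$, i.e.\ no zeros, if the derivative never vanishes), and $\mathcal{N}_{d_I,d_S}'$ has constant sign on each of the open intervals determined by these points.

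It then remains to promote ``constant sign of the derivative'' to ``strict monotonicity'' on each closed-open piece $[l^*_i,l^*_{i+1})$ and to check that the monotonicity genuinely reverses at each interior node. Setting $l^*_1=l^*$ and $l^*_{m+1}=\infty$, on each interval $(l^*_i,l^*_{i+1})$ the analytic function $\mathcal{N}_{d_I,d_S}'$ keeps a fixed sign (it cannot change sign without a zero), so $\mathcal{N}_{d_I,d_S}$ is strictly monotone there, and by continuity strictly monotone on the closure-from-the-left $[l^*_i,l^*_{i+1})$; on the last piece the sign is $+$ by \eqref{GH2}, giving strict increase on $[l^*_m,\infty)$. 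Finally, at an interior node $l^*_i$ ($2\le i\le m$) the monotonicity must flip: if the sign of $\mathcal{N}_{d_I,d_S}'$ were the same on both sides of $l^*_i$, then $l^*_i$ would be a non-sign-changing zero, and I would remove it from the list — so after this pruning each retained $l^*_i$ is a genuine sign change and hence a point where $\mathcal{N}_{d_I,d_S}$ changes monotonicity. The only mild subtlety, and the part I would be most careful about, is making sure the hypothesis $\mathcal{N}_{d_I,d_S}'(l^*)\neq 0$ is really what prevents zeros from clustering at $l^*$ (rather than, say, needing analyticity up to the boundary, which is not claimed); continuity of $\mathcal{N}_{d_I,d_S}'$ at $l^*$ from Lemma \ref{lem2}-(iii) is exactly enough for this.
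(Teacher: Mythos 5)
Your proposal is correct and follows essentially the same route as the paper, which justifies the lemma exactly by the analyticity of $u^l$ via the analytic implicit function theorem and linear stability, the $C^1$ regularity up to $l^*$ from Lemma \ref{lem2}, and the limit \eqref{GH2}; you have simply written out the finiteness-of-zeros and sign-change bookkeeping that the paper leaves implicit. Your observation that the hypothesis $\frac{d\mathcal{N}_{d_I,d_S}(l^*)}{dl}\ne 0$ (with continuity of the derivative at $l^*$) is what prevents critical points from accumulating at the left endpoint is exactly the right reading of why that hypothesis appears.
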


Next, we give  a proof of Theorem \ref{T2-2}.

\begin{proof}[Proof of Theorem \ref{T2}] Suppose that \eqref{T2-eq1} holds.   Then \begin{equation}\label{ZXZ2}
\lim_{l\to\infty}\mathcal{N}_{d_I}(l)= \overline{(\gamma/\beta)}\mathcal{R}_1<1=\mathcal{N}_{d_I}(l^*).
\end{equation}
As a result,  there exist some $\tilde{l}_0^*\gg l^*$ such that   $\tilde{M}^*_{d_I}:= \sup_{l\ge \tilde{l}_0^*}\mathcal{N}_{d_I}(l)<\mathcal{N}_{d_I}(l^*)$. We first set $\tilde{d}_3^*:=|\Omega|l^*({\mathcal{N}_{d_I}(l^*)-\tilde{M}^*_{d_I}})/({\tilde{l}_0^*\int_{\Omega}u^{\tilde{l}_0^*}})$. Next, note from  \eqref{T2-2-eq1} and  \eqref{v-l-eq2} that  
\begin{equation}\label{ZXZ1}
\frac{d\mathcal{N}_{d_I}(l^*)}{dl}=  \Big(1-{\big(\overline{\varphi_1}\big)\big(\overline{\beta\varphi_1^2}\big)}\big/{\overline{\beta\varphi_1^3}}\Big)\mathcal{R}_1>0.
\end{equation}
Hence, there is $0<d_3^*\le \tilde{d}_3^*$ such that $\frac{d\mathcal{N}_{d_I,d_S}(l^*)}{dl}>0$ for all $0\le d_S\le d_3^*$. Now fix $0<d_S<d_3^*$. Let $m\ge 1$ be given by Lemma \ref{lem4}. Hence $\mathcal{N}_{d_I,d_I}$ is strictly increasing on $[l^*,l_2^*)$ and on $[l_m^*,\infty)$.  Observing that $\mathcal{N}_{d_I,d_S}(l^*)=\mathcal{N}_{d_I}(l^*)=1$ and 
 $$
 \mathcal{N}_{d_I,d_S}(\tilde{l}_0^*)=\mathcal{N}_{d_I}(\tilde{l}_0^*)+\frac{\tilde{l}_0^*d_S}{l^*|\Omega|}\int_{\Omega}u^{\tilde{l}_0^*}<\mathcal{N}_{d_I}(\tilde{l}_0^*)+\frac{\tilde{l}_0^*\tilde{d}_3^*}{|\Omega|l^*}\int_{\Omega}u^{\tilde{l}_0^*}=\mathcal{N}_{d_I}(\tilde{l}_0^*)+\mathcal{N}_{d_I,d_S}(l^*)-\tilde{M}^*_{d_I}\le \mathcal{N}_{d_I,d_S}(l^*),
 $$
 then we must have that $m\ge 3.$ Note that the simple connected curve $\mathcal{C}$  parametrized by \eqref{FRD1} as in the proof of Theorem \ref{T2-2} consists of the EE solutions of \eqref{e1}. This time around, since $\mathcal{N}_{d_I,d_S}$ is increasing on $[l^*,l_1^*]$, then $\mathcal{C}$ bifurcates from the right at  $\mathcal{R}_0=1$.

 \quad Next, since $\mathcal{N}_{d_I,d_S}$ is strictly increasing on $[l_1^*,l_2^*]$ and $[l^*_m,\infty)$, and $\mathcal{N}_{d_I,d_S}(\tilde{l}_0)<\mathcal{N}_{d_I,d_S}(l_1^*)$, $\mathcal{R}_{0,1}^{d_S}:=\min\{\mathcal{N}_{d_I,d_S}(l^*_i) : i=2,\cdots,3 \}$ is the global minimum value of $\mathcal{N}_{d_I,d_S}$ and is achieved at some $l_{i_0}^*$, $i_0=3,\cdots,m$.  So, by Lemma \ref{lem3}, system \eqref{e1} has no EE solution for $\mathcal{R}_0<\mathcal{R}_{0,1}^{d_S}$ and $ 
 (l_{i_0}(1-d_Iu^{l_{i_0}}),d_Sl_{i_0}u^{l_{i_0}})
 $  is an EE solution of \eqref{e1} for $\mathcal{R}_0=\mathcal{R}_{0,1}^{d_S}$.  Thus, {\rm (i)} and {\rm (ii)} are proved.  Next, set $\mathcal{R}_{0,2}^{d_S}=\mathcal{N}_{d_I,d_S}(l_2^*)$, and $\mathcal{R}_{0,3}^{d_S}=\max\{\mathcal{N}_{d_I,d_S}(l^*_i):i=1,\cdots,m\}$

\quad {\rm (iii)} First, suppose that $\mathcal{R}_0\in (\mathcal{R}_{0,1}^{d_S},1]$. By the intermediate value theorem,  it follows as in \eqref{l-high-def} and \eqref{l-low-def} that both $ l_{\rm high}(d_S)>l_{m}^*$ and $l_{\rm low}(d_S)\in (l^*_{2},l_m^*) $ are well defined. Moreover, $(S_{\rm high},I_{\rm high})$ and $(S_{\rm low}, I_{\rm low})$ defined as in \eqref{high-ee} and \eqref{low-ee}, respectively, are two distinct EE solutions of \eqref{e1}.

\quad Next, suppose that $\mathcal{R}_0=\mathcal{R}_{0,2}^{d_S}$.  By the intermediate value theorem, since $$
 \mathcal{N}_{d_I,d_S}(l_m^*)<\mathcal{N}_{d_I,d_S}(l_2^*)=\mathcal{R}_{0,2}^{d_S}<\mathcal{R}_0,
 $$
 and $\mathcal{N}_{d_I,d_S}(l)\to \infty$ as $l\to \infty$, there is  $ l_{\rm high}(d_S)>l_{m}^*$ such that $\mathcal{N}_{d_I,d_S}(l_{\rm high})=\mathcal{R}_0$. Hence,   $({l}_{\rm high}(1-d_Iu^{{l}_{\rm high}}),d_S{l}_{\rm high}u^{{l}_{\rm high}})$  and $({l}^*_{2}(1-d_Iu^{{l}^*_{2}}),d_S{l}^*_{2}u^{{l}^*_{2}})$  are two distinct EE solutions of \eqref{e1}. This completes the proof of {\rm (iii)}

\quad   {\rm (iv)} Suppose that $1<\mathcal{R}_0<\mathcal{R}_{0,2}^{d_S}$. Observe that 
$$
\mathcal{N}_{d_{I},d_S}(l_1^*)=1<\mathcal{R}_0<\mathcal{R}_{0,2}^{d_S}=\mathcal{N}_{d_I,d_S}(l_2^*) \quad \text{and}\quad \mathcal{N}_{d_I,d_S}(l_{i_0})<\mathcal{N}_{d_I,d_S}(l_1^*)<\mathcal{R}_{0}<\mathcal{N}_{d_I,d_S}(l_2^*).
$$
 Hence, since $\mathcal{N}_{d_I,d_S}(l)\to \infty$ as $l\to \infty$, we can employ the intermediate theorem to deduce the existence of minimal numbers  $l_{\rm low,1}\in (l_1^*,l_2^*)$, $l_{\rm low,2}\in ({l}^*_{2},l_{i_0}^*)$, and a maximal number $l_{\rm high}>l_{i_0}^*$ such that $(S^i_{\rm low},I^i_{\rm low}):=(l_{\rm low,i}(1-d_Iu^{l_{\rm low,i}}),d_Sl_{\rm low,i}u^{l_{\rm low,i}})$, $i=1,2$, and $(S{\rm high},I_{\rm high}):=(l_{\rm high}(1-d_Iu^{l_{\rm high}}),d_Sl_{\rm high}u^{l_{\rm high}})$ are three different EE solutions of \eqref{e1}.  Clearly, $(S^1_{\rm low},I^1_{\rm low})$ and $(S_{\rm high},I_{\rm high})$ are the minimal and maximal EE solutions of \eqref{e1} in the sense of \eqref{T0-eq2} and \eqref{T0-eq1}, respectively.

 \quad {\rm (v)}  If $\mathcal{R}_0>\mathcal{R}_{0,2}^{d_S}$, then since $\mathcal{N}_{d_I,d_S}(l)\to \infty$ as $l\to \infty$,  it follows from the intermediate value theorem that there is $l_{\rm high}>l^*_2$, that $(l_{\rm high}(1-d_Iu^{l_{\rm high}}),d_Sl_{\rm high}u^{l_{\rm high}})$  is an EE solution of \eqref{e1}.  Now, suppose that  $\mathcal{R}_0>\mathcal{R}_{0,3}^{d_S}$. Then, since  $\mathcal{N}_{d_I,d_S}$ is strictly increasing on $[l^*_m,\infty)$, $\mathcal{N}_{d_I,d_S}(l^*_m)<\mathcal{R}_0$, and $\mathcal{N}_{d_I,d_S}(l)\to \infty$ as $l\to\infty$, there is a unique $\hat{l}>{l}^*_{m}$ such that $\mathcal{N}_{d_I,d_S}(\hat{l})=\mathcal{R}_0$ and $\mathcal{N}_{d_I,d_S}(l)<\mathcal{R}_0$ for all $l\in[l^*_m,\hat{l})$. Moreover, observe that $$
 \mathcal{N}_{d_I,d_S}(l)\le \mathcal{R}_{0,3}^{d_S}<\mathcal{R}_0,\quad \forall\ l\in[l^*_1,l_m^*].
 $$
 Therefore, $(\hat{l}(1-d_Iu^{\hat{l}}),d_S\hat{l}u^{\hat{l}})$ is the unique EE solution of \eqref{e1}. 

 \quad Since $\mathcal{N}_{d_I,d_S}$ is strictly monotone increasing in $d_S$, we see that $\mathcal{R}_{0,i}^{d_S}$ is strictly increasing in $d_S$ for each $i=1,\cdots,3$. Clearly, from the definition of $\mathcal{R}_{0,1}^{d_S}=\min_{l\ge l^*}\mathcal{N}_{d_I,d_S}(l)$, we get that $\mathcal{R}_{0,1}^{d_S}\to \inf_{l\ge l^*}\mathcal{N}_{d_I}(l)=\mathcal{R}_0^{\rm low}$. Finally, from \eqref{ZXZ1}, we can find $\tilde{l}^{**}_{0}>l^*$ such that $\mathcal{N}_{d_I}$ is strictly increasing on $[l^*,\tilde{l}^{**}_0]$. As a result, we get that $\mathcal{N}_{d_I,d_S}(l)=\mathcal{N}_{d_I}(l)+(d_Sl\int_{\Omega}u^l)/(|\Omega|l^*)$ is strictly increasing on $[l^*,\tilde{l}^{**}_0]$ for every $d_S>0$ since $u^l$ is strictly increasing in $l>l^*$. Therefore, for every $0<d_S<d_3^*$, $l^*_2\ge \tilde{l}^{**}_0$ and $1=\mathcal{N}_{d_I,d_S}(l^*)<\mathcal{N}_{d_I}(\tilde{l}^{**}_0)<\mathcal{N}_{d_I,d_S}(l^*_2)=\mathcal{R}_{0,2}^{d_S}$. As a result, $\mathcal{R}_{0,2}^*=\lim_{d_S\to 0}\mathcal{R}_{0,2}^{d_S}\ge \mathcal{N}_{d_I}(\tilde{l}^{**}_0)>1$.

\end{proof}

We complete this section with  a proof of Theorem \ref{T4}.

\begin{proof}[ Proof of Theorem \ref{T4}] Fix $d_I>0$ and suppose that $\mathcal{R}_0^{\rm low}<1$.

\quad {\rm (i)} Fix $\mathcal{R}_0^{\rm low}<\mathcal{R}_0<1$  and let $d_1^*$ be given by Theorem \ref{T0}. For every $0<d_S<d_1^*$, let $l_{\rm high}(d_S)$ and $l_{\rm low}(d_S)$ be defined by \eqref{l-high-def} and \eqref{l-low-def}, respectively. First, we  claim that 
    
    \begin{equation}\label{a-rev-2}
        l_{\rm low}(d_{S,1})<l_{\rm low}(d_{S,2})\quad \forall\ 0<{d_{S,1}}<d_{S,2}<d_1^*.
    \end{equation}
    Indeed, fix $0<d_{S,1}<d_{S,2}<d_1^*$. Then, since $\mathcal{N}_{d_I,d_{S,2}}(l_{\rm low}(d_{S,2}))=\mathcal{R}_0$,
    \begin{align*}
        \mathcal{N}_{d_I,d_{S,1}}(l_{\rm low}(d_{S,2}))=\mathcal{N}_{d_I,d_{S,2}}(l_{\rm low}(d_{S,2}))
        -(d_{S,2}-d_{S,1})\frac{l_{\rm low}(d_{S,2})}{l^*|\Omega|}\int_{\Omega}u^{l_{\rm low}(d_{S,2})}<\mathcal{R}_0.
        \end{align*}
    Hence, by \eqref{l-low-def} and \eqref{a-rev-1}, we have that \eqref{a-rev-2} holds. Next, we claim that

    \begin{equation}\label{a-rev-3}
        l_{\rm high}(d_{S,1})>l_{\rm high}(d_{S,2})\quad \forall\ 0<{d_{S,1}}<d_{S,2}<d_1^*.
    \end{equation}
    Indeed, fix $0<d_{S,1}<d_{S,2}<d_1^*$. Then, since $\mathcal{N}_{d_I,d_{S,2}}(l_{\rm high}(d_{S,2}))=\mathcal{R}_0$ 
    \begin{align*}
        \mathcal{N}_{d_I,d_{S,1}}(l_{\rm high}(d_{S,2}))=\mathcal{N}_{d_I,d_{S,2}}(l_{\rm high}(d_{S,2}))
        -(d_{S,2}-d_{S_1})\frac{l_{\rm high}(d_{S,2})}{|\Omega|l^*}\int_{\Omega}u^{l_{\rm high}(d_{S,2})}<\mathcal{R}_0.
    \end{align*}
    Hence, by \eqref{l-high-def} and \eqref{a-rev-0}, we have that \eqref{a-rev-3} holds.
    
    \quad Thanks to \eqref{a-rev-2} and \eqref{a-rev-3},  the following limits exist
    \begin{equation*}
        l_{\rm low}^*=\lim_{d_S\to 0^+}l_{\rm low}(d_S)=\inf_{0<d_S<d_1^*}l_{\rm low}(d_S)<l(\mathcal{R}_0,N)
    \end{equation*}
    and 
    \begin{equation*}
        l_{\rm high}^*:=\lim_{d_S\to 0^+}l_{\rm high}(d_S)=\sup_{0<d_S<d_1^*}l_{\rm high}(d_S)>l(\mathcal{R}_0,d_I).
    \end{equation*}

    \quad {\rm (i-1)} Suppose that $ \mathcal{R}_0/\mathcal{R}_1<\overline{{\gamma}/{\beta}}$ and we establish that \eqref{T0-eq4} holds.    In the current case, we first proceed by contradiction to show that 
    \begin{equation}\label{G1}
        l_{\rm high}^*<\infty.
    \end{equation}
    Indeed, if \eqref{G1} were false, then  $l_{\rm high}(d_{S})\to \infty$ as $d_S\to 0$. As, a result, it follows from \eqref{lem2-eq2} that $\int_{\Omega}l_{\rm high}(d_{S})(1-d_Iu^{l_{\rm high}(d_{S})})\to|\Omega|\overline{({\gamma}/{\beta})}$ as $d_S\to 0$. 
    Moreover, since
    \begin{equation*}
        \mathcal{R}_0=\mathcal{N}_{d_I,d_{S}}(l_{\rm high}(d_{S}))>\frac{1}{|\Omega|l^*}\int_{\Omega}l_{\rm high}(d_{S})(1-d_Iu^{l_{\rm high}(d_{S})})\quad \forall\ 0<d_S<d_1^*,
    \end{equation*}
    we obtain that 
    $  
    \mathcal{R}_0\ge \frac{1}{|\Omega|l^*}\lim_{d_S\to 0}\int_{\Omega}l_{\rm high}(d_{S})(1-d_Iu^{l_{\rm high}(d_{S})})=\overline{(\gamma/\beta)}\mathcal{R}_1,
    $ 
    which gives a contradiction. Thus, \eqref{G1} holds.  This shows that there is a positive constant $C_1=C_1(\mathcal{R}_0,d_I)$ such that 
    \begin{equation}\label{G3}
        l_{\rm high}(d_S)\le C_1 \quad \forall\ 0<d_S<\frac{d_1^*}{2}.
    \end{equation}
    In particular,
\begin{equation}\label{G7}
    I_{\rm low}<I_{\rm high}=d_Sl_{\rm high}(d_S)u^{l_{\rm high}(d_S)}\le \frac{C_1}{d_I}d_S\quad \forall\ 0<d_S<\frac{d_1^*}{2}.
\end{equation}
Next, we claim that 
\begin{equation}\label{G4}
    l_{\rm low}^*>l^*.
\end{equation}
If \eqref{G4} were false, then  $l_{\rm low}(d_{S})\to l^*$ as $d_S\to 0$. This in turn implies that 
\begin{align*}
    \mathcal{R}_0=\mathcal{N}_{d_{I}}(l_{\rm low}(d_{S}))+\frac{d_{S}l_{\rm low}(d_{S})}{|\Omega|l^*}\int_{\Omega}u^{l_{\rm low}(d_{S})} 
     \to  1\quad \text{as}\ d_{S}\to 0,
\end{align*}
which contradicts our initial assumption that $\mathcal{R}_0\ne 1$. Therefore, \eqref{G4} holds. Thus, there is $C_2=C_2(\mathcal{R}_0,d_I)>l^*$ such that 
\begin{equation}\label{G5}
    C_2\le l_{\rm low}(d_S)\le l(\mathcal{R}_0,d_I)\quad \forall\  0<d_S<\frac{d_1^*}{2}.
\end{equation}
As a result, for  all $0<d_S<\frac{d_1^*}{2}$, we obtain that 
\begin{align}\label{G6}
I_{\rm low}=&d_{S}l_{\rm low}(d_S)u^{l_{\rm low}(d_S)}
\ge C_2d_Su^{C_2}
\ge  C_2u^{C_2}_{\min}d_S\quad.
\end{align}
Combining \eqref{G6} and \eqref{G7} we derive that \eqref{T0-eq5} holds. 

\quad Next, since $l_{\rm high}^*>l_{\rm low}^*>l^*$, then by Lemma \ref{lem2}-{\rm (ii)}, we have that 
$S_{\rm low}\to l_{\rm low}^*(1-d_Iu^{l_{\rm low}^*})$ and $S_{\rm high}\to l_{\rm high}^*(1-d_Iu^{l_{\rm high}^*})
$ as $d_S\to 0$ in $C^{1}(\overline{\Omega})$. On the other hand, since $N=\int_{\Omega}(S+I)$, we have that 
$$ 
N=l_{\rm low}^*\int_{\Omega}(1-d_Iu^{l_{\rm low}^*}) \quad \text{and}\quad N=l_{\rm high}^*\int_{\Omega}(1-d_Iu^{l_{\rm high}^*}).
$$
Finally, since  $l_{\rm low}^*<l(\mathcal{R}_0,d_I)<l_{\rm high}^*$, then $u^*_{\rm low}:=u^{l_{\rm low}^*}<u^{l_{\rm high}^*}=:u^*_{\rm high}$.

\quad {\rm (i-2)} Suppose that $ \mathcal{R}_0/\mathcal{R}_1>\overline{{\gamma}/{\beta}}$. Note that the proof of \eqref{G5} only relies on the fact that $\mathcal{R}_0\ne 1$. Hence, $(S_{\rm low},I_{\rm low})$ satisfies \eqref{T0-eq4} and \eqref{T0-eq5} as $d_S\to 0$. We claim that 
\begin{equation}\label{G8}
    l_{\rm high}^*=\infty.
\end{equation}
Indeed, since $\mathcal{R}_0>\overline{(\gamma/\beta)}\mathcal{R}_1=\lim_{l\to\infty}\mathcal{N}_{d_I}(l)$, then for every $m>1$, there is $l_m(\mathcal{R}_0,d_I)>m$ such that 
$$ 
\mathcal{N}_{d_I}(l_m(\mathcal{R}_0,d_I))<\mathcal{R}_0.
$$
Therefore, taking this time $d_m(\mathcal{R}_0,d_I):=\frac{(\mathcal{R}_0-\mathcal{N}_{d_I}(l_m(\mathcal{R}_0,d_I)))|\Omega|l^*}{l_m(\mathcal{R}_0,d_I)\int_{\Omega}u^{l_m(\mathcal{R}_0,d_I)}}$, for every $0<d_S<d_m(\mathcal{R}_0,d_I)$, we can employ similar arguments as in \eqref{C2} and the intermediate value theorem to conclude that there is ${l}_m(d_S)>l_m(\mathcal{R}_0,d_I)$ such that $\mathcal{N}_{d_I,d_S}(l_{m}(d_S))=\mathcal{R}_0$. This shows that 
$$ 
l_{\rm high}(d_S)\ge l_{m}(\mathcal{R}_0,d_S)>m\quad \forall\ 0<d_S<d_{m}(\mathcal{R}_0,d_I).
$$
Letting $m\to \infty$ in this inequality leads to \eqref{G8}. Thus, since $l(1-d_Iu^l)\to \frac{\gamma}{\beta}$ as $l\to\infty$ in $C(\overline{\Omega})$ (see \eqref{lem2-eq1}), we conclude that 
$ S_{\rm high}=l_{\rm high}(d_S)(1-d_Iu^{l_{\rm high}(d_S)})\to \frac{\gamma}{\beta}$ as $ d_S\to 0$
uniformly in $C(\overline{\Omega})$. Observing that
$  
d_Sl_{\rm high}(d_S)={\big(N-\int_{\Omega}S_{\rm high}\big)}/{\int_{\Omega}u^{l_{\rm high}(d_S)}}\to {d_I(N-\int_{\Omega}\frac{\gamma}{\beta})}/{|\Omega|}$ as $ d_S\to 0$, where we have used the fact that $u^{l_{\rm high}(d_S)}\to 1/d_I$ as $d_S\to 0$ uniformly on $\Omega$, then 
$ 
I_{\rm high}\to\left(N-\int_{\Omega}\frac{\gamma}{\beta}\right)/|\Omega|$ {as}\ $d_S\to 0
$ 
in $C(\overline{\Omega})$.

\quad {\rm (ii)} In addition, suppose that \eqref{T2-eq1} holds. Let $d_3^*$ and $\mathcal{R}_{0,2}^*$ be given by Theorem \ref{T2} and  fix $1<\mathcal{R}_0<\mathcal{R}^*_{0,2}$. Hence, $1<\mathcal{R}_0<\mathcal{R}_{0,2}^{d_S}$ for every $0<d_S<d_3^*$. Note from the proof of Theorem \ref{T2}-{\rm (iv)}, that for every $0<d_S<d_3^*$,  $(S^1_{\rm low},I^1_{\rm low})$ and $(S_{\rm high},I_{\rm high})$ are the minimal and maximal EE solutions of \eqref{e1} in the sense of \eqref{T0-eq2} and \eqref{T0-eq1}, respectively.  Observing here also from \eqref{T2-eq1} that $\mathcal{R}_0>\overline{({\gamma}/{\beta})}\mathcal{R}_1$, then by the similar argument as in {\rm (i-2)}, we have that  $(S_{\rm high},I_{\rm high})$ has the asymptotic profiles \eqref{T0-eq7} as $d_S\to 0$.  Next, observe that the constant number $\tilde{l}^*_0$ obtained in the proof of Theorem \ref{T2} depends only on $d_I$ and satisfies $l^*_2<\tilde{l}^*_{0}$ for every $0<d_S<d_3^*$. Therefore, $l_{\rm low,1}<\tilde{l}^*_{0}$ for every $0<d_S<d_3^*$. Finally, since $\mathcal{R}_0\ne 1$, we can proceed by the similar arguments leading to \eqref{G4} to obtain that $C_2:=\liminf_{d_S\to0}l_{\rm low,1}>0$. In view of the preceding details, we see that $l_{\rm low,1}$ satisfies inequalities \eqref{G5} with $l(\mathcal{R}_0,d_I)$ being replaced by $\tilde{l}^*_0$. So, $I_{\rm low,1}$ satisfies \eqref{T0-eq4}. Furthermore, up to a subsequence,    $S_{\rm low}^1$ satisfies \eqref{T0-eq5} as $d_S\to 0$.
\end{proof}


\section{Appendix} 
Let $\lambda_0=0<\lambda_1\le \lambda_2\le \cdots\le \lambda_m\le\cdots$ satisfying $\lambda_m\to \infty$ as $m\to\infty$ denote the eigenvalues of 
\begin{equation*}
    \begin{cases}
        0=\Delta \phi +\lambda\phi & x\in\Omega,\cr
        0=\partial_{\vec{n}}\phi & x\in\partial\Omega.
    \end{cases}
\end{equation*}
Let $\{\phi_m\}_{m\ge 0}$ be the orthonormal basis  of $L^2(\Omega)$ where $\phi_m$ is an eigenfunction associated with $\lambda_m$ for each $m\ge 0$. Next, consider the Banach space $\tilde{\mathcal{Z}}:=\{w\in L^2(\Omega) : \overline{w}=0\}={\rm span}(\phi_0)$.  For every $q\ge 2$, the restriction of the Laplace operator on ${\rm Dom}_q\cap\tilde{\mathcal{Z}}$ to $\tilde{\mathcal{Z}}_q:=L^q(\Omega)\cap \tilde{\mathcal{Z}}$ is invertible. Lettting $C_q^*:=\|\Delta^{-1}_{|{\rm Dom}_q\cap\tilde{\mathcal{Z}}}\|$,  for every $w\in \tilde{\mathcal{Z}}_q$, the unique solution $W\in {\rm Dom}_q\cap\tilde{\mathcal{Z}}$  of 
\begin{equation}\label{appen-1}
    \begin{cases}
        0=\Delta W+w & x\in\Omega,\cr 
        0=\partial_{\vec{n}}W & x\in\partial\Omega,\cr
        0=\overline{W}
    \end{cases}
\end{equation}
satisfies
\begin{equation}\label{appen-2}\|W\|_{W^{2,q}(\Omega)}\le C^*_q\|w\|_{L^q(\Omega)}.\end{equation}
Fix  a H\"older continuous and non-constant function $h$ on $\overline{\Omega}$,  and positive constants $k>0$ and $d_I>0$. Define
\begin{equation}\label{UO1}
    l_0^*=k,\quad \tilde{\varphi}_0={1}/{|\Omega|},\quad \quad \text{and}\quad l_1^*=\overline{h}.
\end{equation}
Let $\tilde{\varphi}_1$ be  the unique solution \eqref{appen-1} with $w_1:=l_0^*(l_1^*-h)\tilde{\varphi}_0/d_I$. Note that $\tilde{\varphi}_1$ is well defined since $\int_{\Omega}w_1=0.$ Next, define 
\begin{equation}\label{l-star-2-eq}
l_2^*=\Big(\overline{(h-l_1^*)h}+k|\Omega|\overline{(h-l_1^*)\tilde{\varphi}_1}\Big)/k,
\end{equation}
and $\tilde{\varphi}_2$ is the unique solution of \eqref{appen-1} with $w_2:=(kl_2^*\tilde{\varphi}_0 -h(h-l_1^*)\tilde{\varphi}_0-k(h-l_1^*)\tilde{\varphi}_1)/d_I.$ Note also that $\tilde{\varphi}_2$ is well defined since $\int_{\Omega}w_2=0$.  Throughout the rest of this section, whenever $h$, $k$ and $d_I$ are given, we shall suppose that $l^*_0$, $l^*_1$, $l_2^*$, $\tilde{\varphi}_0$, $\tilde{\varphi}_1$, and $\tilde{\varphi}_2$ are defined as above.

\begin{prop}\label{appen-prop2} Fix 
$k>0$ and $d_I>0$ and suppose that $h=c_m\phi_m$ for some $m\ge 1$ where $c_m$ is a nonzero constant. Then
    $\tilde{\varphi}_1=-(kh)/{(|\Omega|d_I\lambda_m)}$ and $ l_2^*=\Big(\Big(1-{k^2}/({d_I\lambda_m})\Big)\overline{h^2}\Big)/k$.


    
\end{prop}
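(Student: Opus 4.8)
The plan is a direct computation that exploits the single structural fact available here: since $h=c_m\phi_m$ with $m\ge1$, the function $h$ is $L^2$-orthogonal to the constant eigenfunction $\phi_0=1/\sqrt{|\Omega|}$, and $\phi_m$ itself solves the relevant Poisson problem because it is a Neumann eigenfunction of the Laplacian. First I would record that
\[
\overline h=\frac{1}{|\Omega|}\int_\Omega c_m\phi_m=\frac{c_m}{\sqrt{|\Omega|}}\int_\Omega\phi_m\phi_0=0,
\]
so that by \eqref{UO1} we have $l_1^*=\overline h=0$. Consequently the source term in the definition of $\tilde\varphi_1$ collapses to $w_1=l_0^*(l_1^*-h)\tilde\varphi_0/d_I=-\dfrac{kh}{|\Omega|d_I}=-\dfrac{kc_m}{|\Omega|d_I}\phi_m$.

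Next I would identify $\tilde\varphi_1$ explicitly. Since $\Delta\phi_m=-\lambda_m\phi_m$, $\partial_{\vec{n}}\phi_m=0$ on $\partial\Omega$, and $\overline{\phi_m}=0$ (again because $m\ge1$), a direct substitution shows that $W:=-\dfrac{kc_m}{|\Omega|d_I\lambda_m}\phi_m=-\dfrac{kh}{|\Omega|d_I\lambda_m}$ satisfies $\Delta W+w_1=0$ in $\Omega$, $\partial_{\vec{n}}W=0$ on $\partial\Omega$, and $\overline W=0$; that is, $W$ solves \eqref{appen-1} with datum $w_1$. By the uniqueness of the solution of \eqref{appen-1} (which follows from the invertibility of $\Delta$ on ${\rm Dom}_q\cap\tilde{\mathcal{Z}}$ recorded just before the statement), we conclude $\tilde\varphi_1=W$, which is the first assertion.

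Finally I would compute $l_2^*$ by substituting $l_1^*=0$ into \eqref{l-star-2-eq}, which yields $l_2^*=\big(\overline{h^2}+k|\Omega|\,\overline{h\tilde\varphi_1}\big)/k$. Plugging in the formula for $\tilde\varphi_1$ just obtained gives
\[
\overline{h\tilde\varphi_1}=\frac{1}{|\Omega|}\int_\Omega h\Big(-\frac{kh}{|\Omega|d_I\lambda_m}\Big)=-\frac{k}{|\Omega|d_I\lambda_m}\,\overline{h^2},
\]
hence $k|\Omega|\,\overline{h\tilde\varphi_1}=-\dfrac{k^2}{d_I\lambda_m}\overline{h^2}$, and therefore $l_2^*=\big(1-\tfrac{k^2}{d_I\lambda_m}\big)\overline{h^2}/k$, the second assertion. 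There is essentially no obstacle in this argument; the only point that has to be gotten right is the observation $\overline h=0$, i.e.\ $l_1^*=0$, since without it neither the closed form for $\tilde\varphi_1$ nor the simplification of \eqref{l-star-2-eq} would be clean. Once that is in hand, the eigenfunction ansatz $\tilde\varphi_1\propto\phi_m$ solves \eqref{appen-1} outright and uniqueness does the rest.
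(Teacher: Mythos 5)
Your computation is correct and is exactly the "verification by inspection" that the paper leaves to the reader: note $\overline{h}=0$ so $l_1^*=0$, solve the Poisson problem with the eigenfunction ansatz and invoke uniqueness, then substitute into \eqref{l-star-2-eq}. No gaps.
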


\begin{proof}It can be verified by inspection.


\end{proof}



\begin{prop}\label{appen-prop1} Fix $d_I>0$,
$k>0$, and $h$  H\"older continuous and non-constant function $h$ on $\overline{\Omega}$. For every, $0<\varepsilon<\varepsilon_{k,h}:=\frac{k}{\|h\|_{\infty}}$, define \begin{equation}\label{appen-3}
    \beta_{k,h,\varepsilon}=k+\varepsilon h,
\end{equation} 
and  let $l^*(\varepsilon)$ denote the principal eigenvalue of the weighted linear elliptic equation
\begin{equation}\label{prop-eq2}
    \begin{cases}
        0=d_I\Delta \tilde{\varphi}+\beta_{k,h,\varepsilon}(l(\varepsilon)-\beta_{k,h,\varepsilon})\tilde{\varphi} & x\in\Omega,\cr
        0=\partial_{\vec{n}}\tilde{\varphi} & x\in\Omega.
    \end{cases}
\end{equation}
Denote by $\tilde{\varphi}(\cdot;\varepsilon)$ the unique positive solution of \eqref{prop-eq2} satisfying $\int_{\Omega}\tilde{\varphi}(\cdot;\varepsilon)=1$. Next, define 
\begin{equation}
    \tilde{\varphi}_3(\cdot;\varepsilon)=({\tilde{\varphi}(\cdot;\varepsilon)-\tilde{\varphi}_0-\varepsilon\tilde{\varphi}_1-\varepsilon^2\tilde{\varphi}_2})/{\varepsilon^3} \quad \text{and}\quad l^*_3(\varepsilon)=({l^*(\varepsilon)-{l}^*_0-\varepsilon l^*_1-\varepsilon^2l^*_2})/{\varepsilon^3}.
\end{equation}
 It holds that 
\begin{equation}\label{prop-eq4}
    \limsup_{\varepsilon\to 0}\|\tilde{\varphi}_3(\cdot;\varepsilon)\|_{C^1(\overline{\Omega})}<\infty \quad \text{and}\quad \limsup_{\varepsilon\to 0}|l_3^*(\varepsilon)|<\infty.
\end{equation}
    
\end{prop}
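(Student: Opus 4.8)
The plan is to show that the principal eigenpair $\big(l^*(\varepsilon),\tilde{\varphi}(\cdot;\varepsilon)\big)$ of \eqref{prop-eq2} depends analytically on $\varepsilon$ in a neighborhood of $\varepsilon=0$, with $l^*(\cdot)$ taking values in $\mathbb{R}$ and $\tilde{\varphi}(\cdot;\varepsilon)$ in $C^1(\overline{\Omega})$, and that $l_0^*,l_1^*,l_2^*$ together with $\tilde{\varphi}_0,\tilde{\varphi}_1,\tilde{\varphi}_2$ are exactly its Taylor coefficients up to order two at $\varepsilon=0$. Granting this, \eqref{prop-eq4} is merely the boundedness, as $\varepsilon\to0$, of the third-order Taylor remainders of $\varepsilon\mapsto l^*(\varepsilon)$ and $\varepsilon\mapsto\tilde{\varphi}(\cdot;\varepsilon)$ after division by $\varepsilon^3$.

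First I would fix $q\gg n$ so that $W^{2,q}(\Omega)\hookrightarrow C^1(\overline{\Omega})$ and consider the map $G$ from $(-\varepsilon_{k,h},\varepsilon_{k,h})\times{\rm Dom}_q\times\mathbb{R}$ into $L^q(\Omega)\times\mathbb{R}$ given by
$$G(\varepsilon,\psi,l)=\Big(d_I\Delta\psi+\beta_{k,h,\varepsilon}\big(l-\beta_{k,h,\varepsilon}\big)\psi,\ \int_{\Omega}\psi-1\Big).$$
Since $\beta_{k,h,\varepsilon}=k+\varepsilon h$ is a polynomial in $\varepsilon$ with coefficients in $C(\overline{\Omega})$, the map $G$ is analytic, and $G(0,\tilde{\varphi}_0,l_0^*)=0$ by \eqref{UO1}. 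Because $l_0^*-k=0$, the partial derivative in $(\psi,l)$ at this point is
$$D_{(\psi,l)}G(0,\tilde{\varphi}_0,l_0^*)(\phi,\mu)=\Big(d_I\Delta\phi+\tfrac{k}{|\Omega|}\mu,\ \int_{\Omega}\phi\Big),$$
which is an isomorphism of ${\rm Dom}_q\times\mathbb{R}$ onto $L^q(\Omega)\times\mathbb{R}$: given $(f,c)$, integrating the first equation forces $\mu=\frac1k\int_{\Omega}f$, the residual equation $d_I\Delta\phi=f-\overline{f}$ has zero-mean right-hand side and hence a unique solution in ${\rm Dom}_q\cap\tilde{\mathcal{Z}}_q$ (with norm controlled through \eqref{appen-2}), and the constraint $\int_{\Omega}\phi=c$ fixes the additive constant. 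The analytic implicit function theorem \cite[Page 15]{Dan_Henry} then yields $\delta\in(0,\varepsilon_{k,h})$ and an analytic curve $\varepsilon\mapsto(\Psi(\varepsilon),L(\varepsilon))$ on $(-\delta,\delta)$ with $G(\varepsilon,\Psi(\varepsilon),L(\varepsilon))=0$ and $(\Psi(0),L(0))=(\tilde{\varphi}_0,l_0^*)$. Since $\Psi(0)=1/|\Omega|>0$ and $\Psi$ is continuous into $C(\overline{\Omega})$, we have $\Psi(\varepsilon)>0$ for small $\varepsilon$; as the only eigenvalue of \eqref{prop-eq2} admitting a positive eigenfunction is the (simple) principal one $l^*(\varepsilon)$, which depends continuously on $\varepsilon$ by its variational characterization, it follows that $(L(\varepsilon),\Psi(\varepsilon))=(l^*(\varepsilon),\tilde{\varphi}(\cdot;\varepsilon))$ for $0<\varepsilon<\delta$.

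Next I would identify the coefficients by expanding $\tilde{\varphi}(\cdot;\varepsilon)=\tilde{\varphi}_0+\varepsilon a_1+\varepsilon^2 a_2+o(\varepsilon^2)$ in $C^1(\overline{\Omega})$ and $l^*(\varepsilon)=l_0^*+\varepsilon b_1+\varepsilon^2 b_2+o(\varepsilon^2)$, substituting into \eqref{prop-eq2}, and matching powers of $\varepsilon$. At order one this gives $d_I\Delta a_1=k(h-b_1)\tilde{\varphi}_0$; integrating over $\Omega$ forces $b_1=\overline{h}=l_1^*$, and then $a_1-\tilde{\varphi}_1$ is harmonic with homogeneous Neumann data, hence constant, while $\int_\Omega\tilde{\varphi}(\cdot;\varepsilon)=1$ gives $\overline{a_1}=0=\overline{\tilde{\varphi}_1}$, so $a_1=\tilde{\varphi}_1$. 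At order two, using $b_1=l_1^*$ and $a_1=\tilde{\varphi}_1$, one obtains $d_I\Delta a_2=k(h-l_1^*)\tilde{\varphi}_1+\big(h(h-l_1^*)-kb_2\big)\tilde{\varphi}_0$; integrating and using $\overline{\tilde{\varphi}_1}=0$, $\int_\Omega\tilde{\varphi}_0=1$ forces $b_2=\big(\overline{(h-l_1^*)h}+k|\Omega|\,\overline{(h-l_1^*)\tilde{\varphi}_1}\big)/k=l_2^*$, after which the equation for $a_2$ coincides with \eqref{appen-1} for $w_2$, and $\overline{a_2}=0=\overline{\tilde{\varphi}_2}$ yields $a_2=\tilde{\varphi}_2$. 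Thus $l_0^*+\varepsilon l_1^*+\varepsilon^2 l_2^*$ and $\tilde{\varphi}_0+\varepsilon\tilde{\varphi}_1+\varepsilon^2\tilde{\varphi}_2$ are the order-two Taylor polynomials of the analytic maps $\varepsilon\mapsto l^*(\varepsilon)$ and $\varepsilon\mapsto\tilde{\varphi}(\cdot;\varepsilon)\in C^1(\overline{\Omega})$, which is exactly \eqref{prop-eq4}.

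I expect the main obstacle to lie in the functional-analytic bookkeeping rather than in the algebra: verifying that $D_{(\psi,l)}G(0,\tilde{\varphi}_0,l_0^*)$ is genuinely invertible on the chosen pair of spaces (this is where the decomposition of $L^q(\Omega)$ into constants and zero-mean functions together with the estimate \eqref{appen-2} enters), and carefully matching the implicit-function-theorem branch with the actual principal eigenpair of \eqref{prop-eq2} (simplicity of the principal eigenvalue, its continuous dependence on $\varepsilon$, and persistence of positivity of the eigenfunction). Once analytic dependence on $\varepsilon$ is secured, the coefficient matching is a routine order-by-order computation whose solvability conditions hold precisely because $l_1^*,l_2^*,\tilde{\varphi}_1,\tilde{\varphi}_2$ were defined to make them hold.
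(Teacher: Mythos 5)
Your proposal is correct, but it takes a genuinely different route from the paper. The paper argues directly on the third-order remainder: it derives the elliptic equation \eqref{proof-prop-app-eq1} satisfied by $\tilde{\varphi}_3(\cdot;\varepsilon)$ and $l_3^*(\varepsilon)$, uses the invertibility of the Laplacian on zero-mean functions (the estimate \eqref{appen-2}, in the spirit of \eqref{YTYT1}) to get $\|\tilde{\varphi}_3\|_{W^{2,q}}\le M_{q,k,h}(1+|l_3^*|)$ for small $\varepsilon$, and then integrates the remainder equation to bound $|l_3^*|$, absorbing the $\varepsilon$-weighted terms to close the estimate. You instead establish analytic dependence of the normalized principal eigenpair on $\varepsilon$ via the analytic implicit function theorem applied to $G(\varepsilon,\psi,l)$, identify the branch with $(\tilde{\varphi}(\cdot;\varepsilon),l^*(\varepsilon))$ through positivity of $\Psi(\varepsilon)$ and the fact that only the principal eigenvalue of the weighted problem (with positive weight $\beta_{k,h,\varepsilon}$) admits a positive eigenfunction, and then read off $l_0^*,l_1^*,l_2^*$ and $\tilde{\varphi}_0,\tilde{\varphi}_1,\tilde{\varphi}_2$ as the Taylor coefficients by the order-by-order matching, whose solvability conditions are exactly how these quantities were defined; the bound \eqref{prop-eq4} is then the standard boundedness of the third-order Taylor remainder of an analytic (indeed $C^3$) curve. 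Your key linear-algebra step — invertibility of $D_{(\psi,l)}G(0,\tilde\varphi_0,l_0^*)$, which works precisely because the zeroth-order potential $k(l_0^*-k)$ vanishes — checks out, as does the coefficient matching (your $a_1,a_2,b_1,b_2$ reproduce \eqref{UO1} and \eqref{l-star-2-eq} with the zero-mean normalizations of \eqref{appen-1}). What each approach buys: yours yields full analyticity of $\varepsilon\mapsto(l^*(\varepsilon),\tilde{\varphi}(\cdot;\varepsilon))$ and expansions to every order, at the cost of the eigenvalue-perturbation machinery and the branch-identification step; the paper's argument is more elementary and self-contained, reusing only the elliptic a priori estimates already employed elsewhere, and produces the quantitative remainder bound directly. (The continuity-in-$\varepsilon$ remark about $l^*(\varepsilon)$ in your identification step is superfluous: positivity of $\Psi(\varepsilon)$ plus uniqueness of the eigenvalue with a positive eigenfunction already forces $L(\varepsilon)=l^*(\varepsilon)$.)
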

\begin{proof} By computations, we have that  $\tilde{\varphi}_3$ and $l_3^*$ satisfy
\begin{equation}\label{proof-prop-app-eq1}
    \begin{cases}
        0=d_I\Delta\tilde\varphi_3+  \big(kl_2^*+h(l_1^*-h)\big)(\tilde{\varphi}_1+\varepsilon\tilde{\varphi}_2+\varepsilon^2\tilde{\varphi}_3)+(\beta_{k,h,\varepsilon}l_3^*+hl_2^*)\tilde{\varphi}+ k(l_1^*-h)(\tilde\varphi_2+\varepsilon\tilde{\varphi}_3)& x\in\Omega,\cr   0=\partial_{n}\tilde\varphi_3 & x\in\partial\Omega,\cr
        0=\int_{\Omega}\tilde\varphi_3
    \end{cases}
\end{equation}
Now, fix $q\gg 1$ such that $W^{2,q}(\Omega)$ is continuously embbeded in $C^1(\overline{\Omega})$. Then, by the similar arguments leading to \eqref{YTYT1} and the fact that, there exist $0<\varepsilon_{q,k,h}\ll 1$  and $M_{q,k,h}>0$ such that 
\begin{equation}\label{proof-prop-app-eq2}
    \|\tilde{\varphi}_3(\cdot;\varepsilon)\|_{W^{2,q}(\Omega)}\le M_{q,k,h}(1+|l_3^*|)\quad 0<\varepsilon\le  \varepsilon_{q,k,h}.
\end{equation}
Setting $A_1=kl_2^*+h(l_1^*-h)$ and $A_2=k(l_1^*-h)$, integrating \eqref{proof-prop-app-eq1} and rearranging the terms yield:
\begin{equation*}
    l_3^*\int_{\Omega}\beta_{k,h,\varepsilon}\tilde{\varphi}=-\int_{\Omega}A_1(\tilde{\varphi}_1+\varepsilon\tilde{\varphi}_2)-l_2^*\int_{\Omega}h\tilde{\varphi}-\int_{\Omega}A_2\tilde{\varphi}_2-\varepsilon \int_{\Omega}(A_2+\varepsilon A_1)\tilde{\varphi}_3.
\end{equation*}
Hence, setting $B:=|\Omega|\|A_1\|_{\infty}(\|\tilde{\varphi}_1\|_{\infty}+\|\tilde{\varphi}_2\|_{\infty})+\sum_{i=1}^2\|A_i\|_{\infty}+\sum_{i=1}^3\|\tilde{\varphi}_i\|_{\infty} $ and using \eqref{proof-prop-app-eq2}, for every $0<\varepsilon<\varepsilon_{q,k,h}$, we have
\begin{align*}
    |l_3^*(\varepsilon)|\int_{\Omega}\beta_{k,h,\varepsilon}\tilde{\varphi}\le & |\Omega|\|A_1\|_{\infty}(\|\tilde{\varphi}_1\|_{\infty}+\varepsilon\|\tilde{\varphi}_2\|_{\infty})+|\Omega||l_2^*|\|\tilde{\varphi}\|_{\infty}+\varepsilon(\|A_1\|_{\infty}+\varepsilon\|A_2\|_{\infty})\|\tilde\varphi_{3}\|_{\infty}\cr 
    \le & B+|\Omega||l_2^*|(\|\tilde{\varphi}_0\|_{\infty}+\varepsilon\|\tilde{\varphi}_1\|_{\infty}+\varepsilon^2\|\tilde{\varphi}_2\|_{\infty}+\varepsilon^3\|\tilde{\varphi}_3\|_{\infty}) +\varepsilon B\|\tilde{\varphi}_3\|_{\infty}\cr 
    \le & (1+|\Omega||l_2^*|)B+\varepsilon M_{q,k,k}(B+|\Omega||l_2^*|)(1+|l_3^*(\varepsilon)|).
\end{align*}
Equivalently,
\begin{equation}\label{proof-prop-app-eq3}
    |l_3^*|\Big(\int_{\Omega}\beta_{k,h,\varepsilon}\tilde{\varphi}-\varepsilon M_{q,k,h}(B+|\Omega||l_2^*|)\Big)\le (1+|\Omega||l_2^*|)B+\varepsilon M_{q,k,h}(B+|\Omega||l_2^*|)\quad 0<\varepsilon<\varepsilon_{q,h,k}.
\end{equation}
Next, observing that 
$$
\int_{\Omega}\beta_{k,h,\varepsilon}\tilde{\varphi}\ge (k-\varepsilon\|h\|_{\infty})\int_{\Omega}\tilde{\varphi}=k-\varepsilon\|h\|_{\infty}\quad 0<\varepsilon<\frac{k}{\|h\|_{\infty}},
$$
it follows from \eqref{proof-prop-app-eq3} that 
$ \limsup_{\varepsilon\to 0}|l_3^*(\varepsilon)|\le \frac{(1+|\Omega||l_2^*|)B}{k}$,  which in view of \eqref{proof-prop-app-eq2} yields also that 
$ \limsup_{\varepsilon\to 0}\|\tilde{\varphi}_3(\cdot;\varepsilon)\|_{C^1(\overline{\Omega})}<\infty$ 
since $W^{2,q}(\Omega)$ is continuously embedded in $C^1(\overline{\Omega})$.

\end{proof}

\begin{prop}\label{appen-prop3} Fix 
$k>0$ and $d_I>0$ and suppose that $h=c_m\phi_m$ for some $m\ge 1$. For every $0<\varepsilon\ll 1$, let $\gamma_{k,h,\varepsilon}=\beta_{k,h,\varepsilon}^2$ where $\beta_{h,k,\varepsilon}$ is defined by \eqref{appen-3}. Then $\mathcal{R}_1=\frac{1}{l^*(\varepsilon)}$. Moreover, for sufficiently small values of $\varepsilon$, it holds that 
\begin{equation}\label{TTH1}
     {1}/{\mathcal{R}_1}-\overline{{\gamma_{k,h,\varepsilon}}/{\beta_{k,h,\varepsilon}}}\begin{cases}
         >0 & \text{if}\quad d_I\lambda_m>k^2,\cr
         <0 & \text{if}\quad d_I\lambda_m<k^2,
     \end{cases}
\end{equation}
and 
\begin{equation}\label{TTH2}
   \overline{\beta_{h,k,\varepsilon}\varphi_{1}^3}-\overline{\varphi_1}\Big(\overline{\beta_{k,h,\varepsilon}\varphi_{1}^2}\Big)\begin{cases}
     >0 & \text{if}\quad d_I\lambda_m<2k^2,\cr 
     <0 & \text{if}\quad d_I\lambda_m>2k^2.
   \end{cases}
\end{equation}
Therefore,
\begin{enumerate}
    \item[\rm (i)] if $k^2<d_I\lambda_m<2k^2$, then $ 1>\overline{({\gamma_{k,h,\varepsilon}}/{\beta_{k,h,\varepsilon}})}\mathcal{R}_1$ and $\overline{\beta_{h,k,\varepsilon}\varphi_{1}^3}>\overline{\varphi_1}\Big(\overline{\beta_{k,h,\varepsilon}\varphi_{1}^2}\Big) $  for $0<\varepsilon\ll 1$.
    \item[\rm (ii)] if $2k^2<d_I\lambda_m$, then $\overline{\beta_{h,k,\varepsilon}\varphi_{1}^3}<\overline{\varphi_1}\Big(\overline{\beta_{k,h,\varepsilon}\varphi_{1}^2}\Big)$  for $0<\varepsilon\ll 1$.
\end{enumerate}
    
\end{prop}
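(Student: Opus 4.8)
The plan is to read both \eqref{TTH1} and \eqref{TTH2} off the $\varepsilon$-expansion of the weighted principal eigenpair $(l^*(\varepsilon),\tilde\varphi(\cdot;\varepsilon))$, whose coefficients are supplied by Propositions \ref{appen-prop1} and \ref{appen-prop2}. First I would identify $\mathcal{R}_1$: since here $\gamma_{k,h,\varepsilon}=\beta_{k,h,\varepsilon}^2$, equation \eqref{R-star-pde} with $(\beta,\gamma)=(\beta_{k,h,\varepsilon},\beta_{k,h,\varepsilon}^2)$ becomes $0=d_I\Delta\varphi+\beta_{k,h,\varepsilon}\big(\tfrac1{\mathcal{R}_1}-\beta_{k,h,\varepsilon}\big)\varphi$, which is exactly \eqref{prop-eq2}. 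Because the variational formula \eqref{R-star-eq} identifies $1/\mathcal{R}_1$ with the smallest weighted eigenvalue $\inf_{\varphi\ne0}\int_{\Omega}\big(d_I|\nabla\varphi|^2+\beta_{k,h,\varepsilon}^2\varphi^2\big)\big/\int_{\Omega}\beta_{k,h,\varepsilon}\varphi^2$, we get $\mathcal{R}_1=1/l^*(\varepsilon)$, and simplicity of the principal eigenvalue gives $\varphi_1=c(\varepsilon)\,\tilde\varphi(\cdot;\varepsilon)$ for some $c(\varepsilon)>0$. I would also record that $h=c_m\phi_m$ with $m\ge1$ forces $\overline h=0$, hence $l_1^*=\overline h=0$ and $\overline{(\gamma_{k,h,\varepsilon}/\beta_{k,h,\varepsilon})}=\overline{\beta_{k,h,\varepsilon}}=k$.

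To prove \eqref{TTH1}, I would write
\[
\frac1{\mathcal{R}_1}-\overline{\big(\gamma_{k,h,\varepsilon}/\beta_{k,h,\varepsilon}\big)}=l^*(\varepsilon)-k=\varepsilon l_1^*+\varepsilon^2 l_2^*+\varepsilon^3 l_3^*(\varepsilon)=\varepsilon^2\big(l_2^*+\varepsilon\,l_3^*(\varepsilon)\big).
\]
By Proposition \ref{appen-prop2}, $l_2^*=\big(1-k^2/(d_I\lambda_m)\big)\overline{h^2}/k$, whose sign is that of $d_I\lambda_m-k^2$ (since $\overline{h^2}>0$ and $k>0$), while by Proposition \ref{appen-prop1} the quantity $l_3^*(\varepsilon)$ stays bounded as $\varepsilon\to0$. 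Hence for $0<\varepsilon\ll1$ the bracket has the same sign as $l_2^*$, which is precisely \eqref{TTH1}.

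For \eqref{TTH2}, introduce $\mathcal{G}(\varphi):=\overline{\beta_{k,h,\varepsilon}\varphi^3}-\overline{\varphi}\ \overline{\beta_{k,h,\varepsilon}\varphi^2}$, which is positively homogeneous of degree $3$; since $\varphi_1=c(\varepsilon)\tilde\varphi(\cdot;\varepsilon)$ with $c(\varepsilon)>0$, the sign of the left side of \eqref{TTH2} equals that of $\mathcal{G}\big(\tilde\varphi(\cdot;\varepsilon)\big)$. I would then substitute $\tilde\varphi(\cdot;\varepsilon)=\tilde\varphi_0+\varepsilon\tilde\varphi_1+\varepsilon^2\tilde\varphi_2+\varepsilon^3\tilde\varphi_3(\cdot;\varepsilon)$ and $\beta_{k,h,\varepsilon}=k+\varepsilon h$, using $\tilde\varphi_0=1/|\Omega|$, $\overline{\tilde\varphi_1}=\overline{\tilde\varphi_2}=\overline h=0$, $\tilde\varphi_1=-kh/(|\Omega|d_I\lambda_m)$ (Proposition \ref{appen-prop2}), and the $C^1$-boundedness of $\tilde\varphi_3(\cdot;\varepsilon)$ (Proposition \ref{appen-prop1}) to control the remainder. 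The $\varepsilon^0$- and $\varepsilon^1$-coefficients of $\mathcal{G}$ cancel, and the expansion should yield
\[
\mathcal{G}\big(\tilde\varphi(\cdot;\varepsilon)\big)=\varepsilon^2\,\frac{k\,\overline{h^2}}{|\Omega|^3\,d_I\lambda_m}\Big(\frac{2k^2}{d_I\lambda_m}-1\Big)+O(\varepsilon^3).
\]
Since the prefactor is strictly positive, for $0<\varepsilon\ll1$ the sign of $\mathcal{G}$ is that of $2k^2-d_I\lambda_m$, which is \eqref{TTH2}. Statements (i) and (ii) then follow by intersecting the parameter intervals in \eqref{TTH1}--\eqref{TTH2}, using $\mathcal{R}_1=1/l^*(\varepsilon)>0$ so that $1/\mathcal{R}_1>\overline{(\gamma/\beta)}$ is equivalent to $1>\overline{(\gamma/\beta)}\mathcal{R}_1$.

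The only delicate point is the $\varepsilon$-expansion of $\mathcal{G}$ in the third step: one must correctly collect the quadratic contributions $\tilde\varphi_1^2$, $\tilde\varphi_0\tilde\varphi_2$, $h\tilde\varphi_1$ and verify that the omitted terms are genuinely $O(\varepsilon^3)$ uniformly on $\overline\Omega$ — which is exactly what the $C^1$-bounds of Proposition \ref{appen-prop1} are for. Once the cancellations coming from $\overline{\tilde\varphi_1}=\overline{\tilde\varphi_2}=\overline h=0$ are in place, the remaining work is routine bookkeeping with the explicit formulas of Proposition \ref{appen-prop2}.
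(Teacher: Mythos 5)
Your proposal is correct and follows essentially the same route as the paper's proof: identify $l^*(\varepsilon)=1/\mathcal{R}_1$ from the equivalence of \eqref{R-star-pde} (with $\gamma=\beta^2$) and \eqref{prop-eq2}, read \eqref{TTH1} off the expansion $l^*(\varepsilon)=k+\varepsilon\overline{h}+\varepsilon^2 l_2^*+\varepsilon^3 l_3^*(\varepsilon)$ with $l_2^*$ from Proposition \ref{appen-prop2} and $l_3^*(\varepsilon)$ bounded by Proposition \ref{appen-prop1}, and obtain \eqref{TTH2} by normalizing $\varphi_1$ to $\tilde\varphi$ (your homogeneity remark is exactly the paper's identity \eqref{TTH3}) and extracting the $\varepsilon^2$ coefficient $2k|\Omega|\int_\Omega\tilde\varphi_1^2+\int_\Omega h\tilde\varphi_1$, whose value $\big(\tfrac{2k^2}{d_I\lambda_m}-1\big)\tfrac{k}{|\Omega| d_I\lambda_m}\int_\Omega h^2$ matches the leading term you state. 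The only difference is cosmetic: you exploit $\overline{h}=0$ explicitly, while the paper carries $\varepsilon\overline h$ along and cancels it.
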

\begin{proof}
    It is easy to see from \eqref{R-star-pde} and \eqref{prop-eq2} that $l^*(\varepsilon)=1/{\mathcal{R}_1}$.  Now, by Proposition, $l^*(\varepsilon)$ can be written as 
    \begin{equation}\label{PLP1}
       1/{\mathcal{R}_1} =l^*(\varepsilon)=k+{\varepsilon}\overline{h}+\varepsilon^2l_2^*+\varepsilon^3l_2^*(\varepsilon)\quad 0<\varepsilon\ll 1,
    \end{equation}
    where $l_2^*$ is given by \eqref{l-star-2-eq} and $l_3(\varepsilon)$ satisfies \eqref{prop-eq4}.  As a result, since 
    $ \overline{{\gamma_{k,h,\varepsilon}}/{\beta_{k,h,\varepsilon}}}=\overline{\beta_{k,h,\varepsilon}}=\overline{(k+\varepsilon h)}=k+\varepsilon\overline{h}$, 
    we can then employ  proposition \eqref{appen-prop2} to get
    $$
    \Big({1}/{\mathcal{R}_1}-\overline{{\gamma_{k,h,\varepsilon}}/{\beta_{k,h,\varepsilon}}}\Big)/\varepsilon^2=l_2^*-\varepsilon l_{3}^*(\varepsilon)=\Big(\Big(1-{k^2}/({d_I\lambda_m})\Big)\overline{h^2}\Big)/k-\varepsilon l^*_3(\varepsilon)\quad 0<\varepsilon\ll 1.
    $$
    Thus \eqref{TTH1} holds since $\varepsilon l^*_3(\varepsilon)\to 0 $ as $\varepsilon\to 0$ by \eqref{prop-eq4}. Next, we prove that \eqref{TTH2} holds.

\quad    To this end, set $\tilde{\varphi}=\varphi_{1}/(\int_{\Omega}\varphi_{1})$. Then 

\begin{equation}\label{TTH3}
\overline{\beta_{h,k,\varepsilon}\varphi_1^3}-\overline{\varphi_{1}}\Big(\overline{\beta_{k,h,\varepsilon}\varphi_{1}^2}\Big)=\big(\overline{\beta_{k,h,\varepsilon}\tilde{\varphi}^3}-\overline{\beta_{k,h,\varepsilon}\tilde{\varphi}^2}\big)\|\varphi_{1}\|_{L^1(\Omega)}^3.
\end{equation}
Now, observe from \eqref{R-star-pde}, and the fact $l^*(\varepsilon)=1/\mathcal{R}_1$ and $\int_{\Omega}\tilde{\varphi}=1$, that $\tilde{\varphi}$ is the unique solution of \eqref{prop-eq2}.  Then, by proposition \ref{appen-prop1}, $\tilde{\varphi}$ can be written as
\begin{equation}\label{TTH4}
    \tilde{\varphi}=1/{|\Omega|}+\varepsilon\tilde{\varphi}_1+\varepsilon^2\tilde{\varphi}_2+\varepsilon^3\tilde{\varphi}_3(\cdot;\varepsilon)\quad 0<\varepsilon\ll 1,
\end{equation}
where $\tilde{\varphi}_{3}$ satisfies \eqref{prop-eq4}. Hence 
\begin{align}\label{TTH7}
\overline{\beta_{k,h,\varepsilon}\tilde{\varphi}^3}-\overline{\beta_{k,h,\varepsilon}\tilde{\varphi}^2}
=\varepsilon\int_{\Omega}\beta_{k,h,\varepsilon}\tilde{\varphi}^2(\tilde{\varphi}_1+\varepsilon\tilde{\varphi}_2+\varepsilon^2\tilde{\varphi}_3)
\end{align}
For convenience, we set $P=\tilde{\varphi}_2+\varepsilon\tilde{\varphi}_3$ and $Q=\tilde{\varphi}_1+\varepsilon P$. So, $\tilde{\varphi}={1}/{|\Omega|}+\varepsilon Q$ and 
\begin{align*}
    \beta_{k,h,\varepsilon}\tilde{\varphi}^2Q
    =& \frac{k}{|\Omega|^2}\tilde{\varphi}_1+\varepsilon\Big(\frac{2 k }{|\Omega|}\tilde{\varphi}_1^2+k \tilde{\varphi}^2 P +h \tilde{\varphi}^2Q\Big)+k\varepsilon^2\Big(\frac{2}{|\Omega|}P +Q^2\Big)\tilde{\varphi}_1 
\end{align*}
Observing that
\begin{align*}
    k \tilde{\varphi}^2 P
    =\frac{k}{|\Omega|^2}\tilde{\varphi}_2+k\varepsilon\Big(\frac{\tilde{\varphi}_3}{|\Omega|^2}+\Big(\frac{2}{|\Omega|}+\varepsilon Q\Big)QP\Big)\ \text{and}\
    h \tilde{\varphi}^2 Q
    =\frac{1}{|\Omega|^2}h\tilde{\varphi}_1+\varepsilon h\Big(\frac{P}{|\Omega|^2}+\Big(\frac{2}{|\Omega|}+\varepsilon Q\Big)Q^2\Big)
    \end{align*}
then
\begin{align*}
    \beta_{k,h,\varepsilon}\tilde{\varphi}^2Q
    =\frac{k}{|\Omega|^2}\tilde{\varphi}_1+\frac{\varepsilon}{|\Omega|^2}\Big({2k}{|\Omega|}\tilde{\varphi}_1^2+k\tilde{\varphi}_2+h\tilde{\varphi}_1\Big)+\varepsilon^2\tilde{\mathbb{H}}(\cdot;\varepsilon),
\end{align*}
where $$\tilde{\mathbb{H}}(\cdot;\varepsilon):=k\Big(\frac{2}{|\Omega|}P +Q^2\Big)\tilde{\varphi}_1+k\Big(\frac{\tilde{\varphi}_3}{|\Omega|^2}+\Big(\frac{2}{|\Omega|}+\varepsilon Q\Big)QP\Big)+h\Big(\frac{P}{|\Omega|^2}+\Big(\frac{2}{|\Omega|}+\varepsilon Q\Big)Q^2\Big).$$
As a result, since $\int_{\Omega}\tilde{\varphi}_i=0$, $i=1,2$, we get
\begin{equation}\label{TTH6}
    \frac{1}{\varepsilon}\int_{\Omega}\beta_{k,h,\varepsilon}\tilde{\varphi}^2Q=\frac{1}{|\Omega|^2}\Big(2k|\Omega|\int_{\Omega}\tilde{\varphi}_1^2+\int_{\Omega}h\tilde{\varphi}_1\Big)+\varepsilon\int_{\Omega}\tilde{\mathbb{H}}(\cdot;\varepsilon)\quad 0<\varepsilon\ll 1.
\end{equation}
Now, thanks to proposition \ref{appen-prop2}, $ 
2k|\Omega|\int_{\Omega}\tilde{\varphi}_1^2+\int_{\Omega}h\tilde{\varphi}_1=\Big(\frac{2k^2}{d_I\lambda_m}-1\Big)\frac{k}{|\Omega|d_I\lambda_m}\int_{\Omega}h^2.$  Since by \eqref{prop-eq4}, $\varepsilon\|\tilde{\mathbb{H}}(\cdot;\varepsilon)\|_{\infty}\to 0$ as $\varepsilon\to 0$, we get from \eqref{TTH3}, \eqref{TTH7}, and \eqref{TTH6}  that \eqref{TTH2} holds.

\end{proof}

\end{document}